\documentclass{article}

\RequirePackage[OT1]{fontenc}
\RequirePackage[colorlinks,citecolor=blue,urlcolor=blue]{hyperref}
\usepackage{amssymb}
\usepackage{graphicx}
\usepackage{bbm}
\usepackage{mathrsfs}
\usepackage{amsmath}
\usepackage{amsthm}
\usepackage{enumerate}
\usepackage{verbatim}
\usepackage[hmargin=3cm, vmargin=3.5cm]{geometry}
%\usepackage[notcite,notref]{showkeys}

%
% Add coloured text package 
%
%\usepackage{color}
%\usepackage[usenames,dvipsnames,svgnames,table]{xcolor}
%
%
%

\theoremstyle{plain}
\newtheorem{thm}{Theorem}
\newtheorem{cor}[thm]{Corollary}
\newtheorem{lem}[thm]{Lemma}
\newtheorem{prop}[thm]{Proposition}

\theoremstyle{definition}
\newtheorem{ex}{Example}

\newtheorem*{rmk}{Remark}

\newcommand{\Q}{\mathbb{Q}}
\renewcommand{\P}{\mathbb{P}}
\newcommand{\E}{\mathbb{E}}
\newcommand{\R}{\mathbb{R}}
\newcommand{\F}{\mathcal{F}}

\newcommand{\G}{\mathcal{G}}
\renewcommand{\H}{\mathcal{H}}
\newcommand{\Gt}{\tilde{\mathcal{G}}}

\newcommand{\ind}{\mathbbm{1}}

\newcommand{\eps}{\varepsilon}
\newcommand{\bp}{\begin{proof}}
\newcommand{\ep}{\end{proof}}
\def\bal#1\eal{\begin{align*}#1\end{align*}}
\newcommand{\Nc}{\mathcal{N}}

\newcommand{\Z}{\mathbb{Z}}
\renewcommand{\d}{{\rm{d}}}

\renewcommand{\S}{\mathcal{S}}

\newcommand{\N}{\mathbb{N}}

\author{Simon C.~Harris\thanks{University of Auckland, Private Bag 92019, Auckland 1142, New Zealand. Email: \href{mailto:simon.harris@auckland.ac.nz}{simon.harris@auckland.ac.nz}}, Samuel G.G.~Johnston\thanks{University College Dublin, Belfield, Dublin 4, Ireland. Email: \href{mailto:sggjohnston@gmail.com}{sggjohnston@gmail.com}}~ and Matthew I.~Roberts\thanks{University of Bath, Claverton Down, Bath BA2 7AY, UK. Email: \href{mailto:mattiroberts@gmail.com}{mattiroberts@gmail.com}}}

\title{The coalescent structure of continuous-time\\ Galton-Watson trees}

\begin{document}
\maketitle

\begin{abstract}
Take a continuous-time Galton-Watson tree. If the system survives until a large time $T$, then choose $k$ particles uniformly from those alive. What does the ancestral tree drawn out by these $k$ particles look like? Some special cases are known but we give a more complete answer. We concentrate on near-critical cases where the mean number of offspring is $1+\mu/T$ for some $\mu\in\R$, and show that a scaling limit exists as $T\to\infty$. Viewed backwards in time, the resulting coalescent process is topologically equivalent to Kingman's coalescent, but the times of coalescence have an interesting and highly non-trivial structure. The randomly fluctuating population size, as opposed to constant size populations where the Kingman coalescent more usually arises, have a pronounced effect on both the results and the method of proof required. We give explicit formulas for the distribution of the coalescent times, as well as a construction of the genealogical tree involving a mixture of independent and identically distributed random variables. In general subcritical and supercritical cases it is not possible to give such explicit formulas, but we highlight the special case of birth-death processes.
\end{abstract}

\section{Introduction}
Let $L$ be a random variable taking values in $\Z_+=\{0,1,2,\ldots\}$. Consider a continuous-time Galton-Watson tree beginning with one initial particle and branching at rate $r$ with offspring distribution $L$. We will give more details of the model shortly.

Fix a large time $T$, and condition on the event that at least $k$ particles are alive at time $T$. Choose $k$ particles uniformly at random (without replacement) from those alive at time $T$. These particles, and their ancestors, draw out a smaller tree. The general question that we attempt to answer is: what does this tree look like? This is a fundamental question about Galton-Watson trees; several authors have given answers via interesting and contrasting methods for various special cases, usually when $k=2$. We aim to give a more complete answer with a unified approach that can be adapted to other situations.

Before explaining our most general results we highlight some illuminating examples. Let $\Nc_t$ be the set of particles that are alive at time $t$, and write $N_t = \#\Nc_t$ for the number of particles that are alive at time $t$. Let $m=\E[L]$ and for each $j\ge0$ let $p_j = \P(L=j)$. We assume throughout the article, without further mention, that $p_0+p_1\neq 1$.

On the event $\{N_T\ge 2\}$, choose a pair of particles $(U_T,V_T)\in\Nc_T$ uniformly at random (without replacement). Then let $\S(T)$ be the last time at which these uniformly chosen particles shared a common ancestor. If $N_T \le 1$ then set $\S(T)=0$.

If $p_0\in[0,1)$ and $p_2=1-p_0$, then the model is known as a birth-death process. In this case we are able to calculate explicitly the distribution of $\S(T)$ conditional on $\{N_T\ge 2\}$. In particular,
\begin{itemize}
\item in the supercritical case when $p_2>p_0$, the law of $\S(T)$ conditional on $\{N_T\ge 2\}$ converges as $T\to\infty$ to a non-trivial distribution with tail satisfying
\[\lim_{T\to\infty}\P(\S(T) \ge t\,|\,N_T\ge 2)\sim 2r(m-1)t e^{-r(m-1)t} \,\,\hbox{ as }\,\, t\to\infty;\]
\item in the subcritical case $p_0>p_2$, the law of $T-\S(T)$ conditional on $\{N_T\ge 2\}$ converges as $T\to\infty$ to a non-trivial distribution with tail satisfying
\[\lim_{T\to\infty}\P(T-\S(T) \ge t\,|\,N_T\ge 2)\sim \Big(1-\frac{2p_2}{3p_0}\Big)e^{r(m-1)t} \,\,\hbox{ as }\,\, t\to\infty. \]
\end{itemize}

\noindent
In the critical case we can work more generally.
\begin{itemize}
\item If $L$ has any distribution satisfying $m=\E[L]=1$ and $\E[L^2]<\infty$, then the law of $\S(T)/T$ conditional on $\{N_T\ge 2\}$ converges as $T\to\infty$ to a non-trivial distribution on $[0,1]$ satisfying
\[\lim_{T\to\infty}\P\Big(\frac{\S(T)}{T} \ge t\,\Big|\,N_T\ge 2\Big) = \frac{2(1-t)}{t^2}\Big(\log\Big(\frac{1}{1-t}\Big) - t\Big).\]
\end{itemize}
This last result (the critical case) is known: Durrett \cite{durrett:genealogy} gave a power series expansion, and Athreya \cite{athreya:coalescence} gave a representation in terms of a geometric number of exponential random variables, both of which we will show agree with our explicit formula. Lambert \cite{lambert:coalescence_GW} gave a similar formula for a certain critical continuous state branching process. Athreya also mentioned that his expression could alternatively be obtained by using the excursion representation of continuum random trees. This method was also used by Popovic \cite{popovic:asymptotic_genealogy_critical_bp}, Aldous and Popovic \cite{aldous_popovic:critical_bp_biodiversity}, Lambert \cite{lambert:contour_splitting_trees}, and Lambert and Popovic \cite{lambert:coalescent_branching_trees} to investigate related questions. We give more details of this link in Section \ref{crt_sec}.

Beyond the critical case, we can find a distributional scaling limit when $L$ is ``\emph{near-critical}''. We let the distribution of $L$ depend on $T$, and write $\P_T$ to signify that the Galton-Watson process now depends on $T$ as a result.
\begin{itemize}
\item Suppose that $L$ satisfies $\E_T[L] = 1+\mu/T+o(1/T)$, $\E_T[L(L-1)]=\beta+o(1)$, and that $L^2$ is uniformly integrable under $\P_T$. Then the law of $\S(T)/T$ conditional on $\{N_T\ge 2\}$ converges as $T\to\infty$ to a non-trivial distribution on $[0,1]$ satisfying
\end{itemize}
\[\lim_{T\to\infty} \hspace{-1mm}\P_T\hspace{-0.5mm}\Big(\frac{\S(T)}{T} \ge s \Big| N_T\ge 2\Big) = 2\Big(\frac{e^{r\mu(1-s)}-1}{e^{r\mu(1-s)}-e^{r\mu}}\Big) + 2 \frac{(e^{r\mu}-1)(e^{r\mu(1-s)}-1)}{(e^{r\mu(1-s)}-e^{r\mu})^2} \hspace{-0.5mm}\log\hspace{-0.5mm}\Big(\frac{e^{r\mu}-1}{e^{r\mu(1-s)}-1}\Big).\]
O'Connell \cite[Theorem 2.3]{oconnell:genealogy_mrca} gave this result by using a diffusion approximation, relating the near-critical process to a time-changed Yule tree, and then adapting the method of Durrett \cite{durrett:genealogy} from the critical case. Again, these authors only considered choosing two particles at time $T$.

All of the above special cases---although they are already interesting in their own right---are just a taster of our general results. The effectiveness and adaptability of our method is demonstrated by the fact that it recovers, in these cases, the results of several separate investigations using different techniques \cite{athreya:coalescence, durrett:genealogy, lambert:coalescence_GW, oconnell:genealogy_mrca}. In our main result (see Theorem \ref{nearcritthm}), we will give a complete description for the genealogical tree of a uniform sample of $k\geq2$ individuals in near-critical Galton-Watson processes in the large time limit.

We now attempt to describe our general results in a little more detail. For any $k\ge 2$, under a second moment condition on $L$, we sample $k$ particles without replacement at time $T$ and trace back the tree induced by them and their ancestors. It turns out that if we view this tree backwards in time, then the coalescent process thus obtained is topologically the same as Kingman's coalescent, but has different coalescent rates. We give an explicit joint distribution function for the limiting $k-1$ coalescent times, which are also asymptotically independent of the Kingman tree topology; it turns out that they can be constructed by choosing $k$ independent random variables with a certain distribution and renormalising by the maximum. Equivalently, the coalescent times can also be interpreted as being a mixture of independent identically distributed random variables. The correlation introduced by this mixture is linked to the random variations of the population size. On the other hand, Kingman's coalescent usually arises from populations where the total number of individuals is kept constant: see, for example, \cite{schweinsberg2003coalescent}. One of the biggest hurdles in our proof was to overcome the effect of fluctuations in the population size; we did this using a very natural change of measure $\Q^{k,T}$ under which the coalescent times decorrelate, making calculations easier.

After this article was released, using knowledge of the precise form of our answers,  Lambert  \cite{lambert:genealogy_binary} was able to construct a remarkable method to obtain some of our formulas for \emph{coalescent point processes}. However, \cite{lambert:genealogy_binary} assumes binary branching, so whilst it can apply to birth-death processes, it does not cover our main results concerning general near-critical Galton-Watson processes. We discuss this approach further in Section \ref{crt_sec}.

Ren, Song and Sun \cite{ren_song_sun:2spinesuperprocess, ren_song_sun:2spineGW} have also subsequently used a 2-spine approach (involving analogues of our $\Q^{2,T}$) to give elegant probabilistic proofs of Yaglom theorems about the size of the population conditional on survival, both for the discrete time critical Galton-Watson processes \cite{ren_song_sun:2spineGW} and critical superprocesses \cite{ren_song_sun:2spinesuperprocess}.

In Section \ref{sec:results}, we state full details our main results, we present a more intuitive probabilistic construction of the near-critical scaling limit, and we  then provide a heuristic explanation and intuitive probabilistic derivation for it. 
We follow that with discussion of some of the properties of the scaling limit and comparisons to related results in Section \ref{chatsec}. In Section \ref{CoMsec}, we introduce the tools required to prove our results, including a change of measure and a version of Campbell's formula. We then prove our main result for birth-death processes in Section \ref{BDsec}, and our main result for near-critical processes in Section \ref{nearcritsec}.

\section{Results}\label{sec:results}

We first describe, in more detail than previously, our basic continuous-time Galton-Watson tree. Under a probability measure $\P$, we begin with one particle, the root, which we give the label $\emptyset$. This particle waits an exponential amount of time $\tau_\emptyset$ with parameter $r$, and then instantaneously dies and gives birth to some offspring with labels $1,2,\ldots,L_\emptyset$, where $L_\emptyset$ is an independent copy of the random variable $L$. To be precise, at the time $\tau_\emptyset$ the particle $\emptyset$ is no longer alive and its offspring are. These offspring then repeat, independently, this behaviour: each particle $u$ waits an independent exponential amount of time with parameter $r$ before dying and giving birth to offspring $u1, u2, \ldots, uL_u$ where $L_u$ is an independent copy of $L$, and so on. We let $p_j=\P(L = j)$ and $m=\sum_{j=1}^\infty j p_j$. Since we will be using more than one probability measure, we will write $\P[\cdot]$ instead of $\E[\cdot]$ for the expectation operator corresponding to $\P$.

Denote by $\Nc_T$ the set of all particles alive at time $T$. For a particle $u\in \Nc_T$ we let $\tau_u$ be the time of its death, and define $\tau_u(T) = \tau_u \wedge T$. If $u$ is an ancestor of $v$, we write $u\le v$, and if $u$ is a \emph{strict} ancestor of $v$ (i.e.~$u\le v$ and $u\neq v$) then we write $u<v$. For technical reasons we introduce a graveyard $\Delta$ which is not alive (it is not an element of $\Nc_T$).

For a particle $u\in\Nc_t$ and $s\le t$, let $u(s)$ be the unique ancestor of $u$ that was alive at time $s$. For two particles $u,v\in \Nc_T$, let $\sigma(u,v)$ be the last time at which they shared a common ancestor,
\[\sigma(u,v) = \sup\{ t\ge 0 : u(t) = v(t)\}.\]

Now fix $k\in\N$, and at time $T$, on the event $N_T\ge k$, pick $k$ particles $U^1_T,\ldots, U^k_T$ uniformly at random without replacement from $\Nc_T$. We let $\mathcal P^k_t(T)$ be the partition of $\{1,\ldots,k\}$ induced by letting $i$ and $j$ be in the same block if particles $U^i_T$ and $U^j_T$ shared a common ancestor at time $t$, i.e.~if $\sigma(U^i_T,U^j_T)>t$. We order the elements of $\mathcal P^k_t(T)$ by their smallest element.

There are two aspects to the information contained in $\mathcal P^k_t(T)$. The first is the topological information; given a collection of blocks, which block will split first, and when it does, what will the new blocks created look like? The second is the times at which the splits occur. We will find that in the models we look at, the topological information is (asymptotically) universal and rather simple to describe, whereas the split times are much more delicate and depend on the parameters of the model. In order to separate out these two aspects, we require some more notation.

Let $\nu^k_t(T)$ be the number of blocks in $\mathcal P^k_t(T)$, or equivalently the number of distinct ancestors of $U^1_T\,\ldots,U^k_T$ that are alive at time $t$; that is, $\nu^k_t(T) = \#\{u\in\mathcal N_t : u<U^i_T \hbox{ for some } i\le k\}$.

For $i=1,\ldots, k-1$ let
\[\S^k_i(T) = \inf\{t\ge 0 : \nu^k_t > i\}.\]
We call $S^k_1(T)\le \ldots\le S^k_{k-1}(T)$ the \emph{split times}. For technical reasons it is often easier to consider the unordered split times; we let $(\tilde\S^k_1(T),\ldots,\tilde\S^k_{k-1}(T))$ be a uniformly random permutation of $(\S^k_1(T),\ldots,\S^k_{k-1}(T))$.

For $i=0,\ldots,k-1$ let $P^k_i(T) = \mathcal P^k_{S^k_i}(T)$, and let $\mathcal H = \sigma(P^k_0(T),\ldots,P^k_{k-1}(T))$, so that $\mathcal H$ contains all the topological information about the tree generated by $U^1_T,\ldots U^k_T$, but almost no information about the split times.

\subsection{Birth-death processes}\label{birthdeathresults}

Fix $\alpha\ge 0$ and $\beta>0$. Suppose that $r = \alpha+\beta$, $p_0 = \alpha/(\alpha+\beta)$ and $p_2 = \beta/(\alpha+\beta)$, with $p_j=0$ for $j\neq 0,2$. This is known as a birth-death process with birth rate $\beta$ and death rate $\alpha$. Note that since there are only binary splits, if there are at least $k$ particles alive at time $T$ then when we pick $k$ uniformly at random as above there are always exactly $k-1$ distinct split times. Our first theorem gives an explicit distribution for these split times, in the non-critical case and conditional on $\{N_T\ge k\}$.

\begin{thm}\label{noncritBDthm}
Suppose that $\alpha\neq\beta$. For any $s_1,\ldots,s_{k-1}\in(0,T]$, the unordered split times are independent of $\mathcal H$ and satisfy
\begin{align*}
&\P(\tilde \S^k_1(T) \ge s_1, \ldots, \tilde\S^k_{k-1}(T) \ge s_{k-1} | N_T\ge k)\\
&= \frac{k(E_0 -\alpha/\beta)^k}{(E_0-1)^{k-1}}\Bigg[ \frac{1}{(E_0-\alpha/\beta)}\prod_{i=1}^{k-1}\frac{E_i - 1}{E_i-E_0}+ \sum_{j=1}^{k-1}\frac{(E_j-1)}{(E_j-E_0)^2}\bigg(\prod_{\substack{i=1\\ i\neq j}}^{k-1} \frac{E_i-1}{E_i-E_j}\bigg)\log\hspace{-0.5mm}\Big(\frac{\beta E_0-\alpha}{\beta E_j-\alpha}\Big)\Bigg]
\end{align*}
where $E_j = e^{(\beta-\alpha)(T-s_j)}$ for each $j=1,\ldots,k$ and $s_0=0$. Furthermore, the partition process $P^k_0(T),P^k_1(T),\ldots,P^k_{k-1}(T)$ has the following description:
\begin{itemize}
\item if $P^k_i(T)$ contains blocks of sizes $a_1,\ldots,a_{i+1}$, the probability that the next block to split will be block $j$ is $\frac{a_j-1}{k-i-1}$;
\item if a block of size $a$ splits, it creates two blocks whose (ordered) sizes are $l$ and $a-l$ with probability $1/(a-1)$ for each $l=1,\ldots,a-1$.
\end{itemize}
\end{thm}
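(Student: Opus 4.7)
The plan is to invoke the $k$-spine change of measure $\Q^{k,T}$ and the Campbell-type formula developed in Section \ref{CoMsec}. The joint probability
\[
\P\bigl(\tilde{\S}^k_1(T) \ge s_1, \ldots, \tilde{\S}^k_{k-1}(T) \ge s_{k-1}, N_T \ge k\bigr)
\]
can, after symmetrising over the $k!$ orderings of the split-time thresholds, be expressed as an unordered sum over $k$-tuples of distinct particles in $\Nc_T$ weighted by $\binom{N_T}{k}^{-1}$. Campbell's formula then converts this into a $\Q^{k,T}$-expectation in which the $k$ distinguished ``spine'' particles, together with the subtrees hanging off the spine tree, are explicit.

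Under $\Q^{k,T}$, specialised to birth-death, the spine structure is particularly simple. Since every branching event is binary, each spine branching produces two children, and since the $k$ spines must remain distinct, the set of spines is partitioned at each branching event into two non-empty subsets carried by the two children. There are therefore exactly $k-1$ spine branching events in $(0,T]$. At each such event, a single non-spine birth-death subtree hangs off and must survive to time $T$, contributing a factor of the classical survival probability
\[
q(T - s) = \frac{\beta - \alpha}{\beta - \alpha e^{-(\beta - \alpha)(T - s)}}.
\]
The resulting expectation factorises cleanly into a topological part and a temporal part.

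The topological part, which is uniform on labelled binary shapes on $k$ leaves, is exactly equivalent to the two-bullet description in the statement: a block of size $a_j$ contains $a_j - 1$ remaining branchings, each equally likely to come next, and when a block of size $a$ splits the $a-1$ ordered partitions $(l, a-l)$ are equally likely. This already yields both the independence of the split times from $\H$ and the construction of the partition process. The temporal part is a $(k-1)$-fold integral against the spine branching rate and the survival factors $q(T - s_j)$; after the change of variables $E_j = e^{(\beta-\alpha)(T-s_j)}$, the integrand becomes a rational function, which I would treat by partial fractions with denominators $\prod_{i \ne j}(E_i - E_j)$.

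The main obstacle will be organising the symmetrisation over orderings of the split times so that the partial fraction expansion collapses into the stated closed form. The logarithmic term should arise when, after partial fractions, one residual integral takes the form $\int dE/(\beta E - \alpha)$; careful bookkeeping of the cancellations is required to confirm that the sum is symmetric in $(E_1, \ldots, E_{k-1})$ and matches the formula. Finally, $\P(N_T \ge k)$ has an explicit form---since conditional on $\{N_T \ge 1\}$ the variable $N_T$ is geometrically distributed for birth-death processes---and dividing by it should produce the prefactor $k(E_0 - \alpha/\beta)^k/(E_0 - 1)^{k-1}$ and complete the proof.
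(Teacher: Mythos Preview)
Your proposal has a structural error in the description of $\Q^{k,T}$ for birth-death, and it omits the mechanism that actually produces the one-dimensional integral you need to evaluate.

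First, the claim that ``at each such event, a single non-spine birth-death subtree hangs off and must survive to time $T$'' is not correct. In the birth-death case, at a spine \emph{splitting} event both children carry marks (the $k$ marks partition into two non-empty groups), so no non-spine subtree is created there. Non-spine subtrees are born at the \emph{non-splitting} births along the spine, which occur at Poisson rate $rm$ along each spine branch; and under $\Q^{k,T}$ these subtrees are ordinary $\P$-trees with no survival requirement. So there is no product of survival probabilities $q(T-s_j)$ indexed by the $k-1$ split times.

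Second, and more importantly, you have not accounted for how the $\P(\cdot\mid N_T\ge k)$ expectation with uniform sampling becomes a tractable $\Q$-computation. The paper's route is: Proposition \ref{firstprop} gives
\[
\P\Bigl[\tfrac{1}{N_T^{(k)}}\sum_{u\in\Nc_T^{(k)}} f(u)\,\Bigm|\, N_T\ge k\Bigr]
=\frac{\P[N_T^{(k)}]}{\P(N_T\ge k)(k-1)!}\int_0^\infty (e^z-1)^{k-1}\,\Q^{k,T}\bigl[e^{-zN_T}f(\xi_T)\bigr]\,\d z,
\]
where the $1/N_T^{(k)}$ has been converted to an exponential integral via Lemma \ref{recip}. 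The Campbell-type result (Proposition \ref{Qmgf_gen_prop}) then evaluates $\Q[e^{-z\tilde N_T}\mid\G^k_T]$ as a product over the spine branches, and combining with the $\Q$-density of the split times (Lemma \ref{Qsplitdist}) yields the integral in Proposition \ref{BDdensity}. The extra integration variable you will need for partial fractions is precisely this $y=e^{-z}$, not a survival-probability parameter. Your proposal effectively drops the $1/N_T^{(k)}$ weight, which is where the coupling between split times and population size lives; without it you cannot recover the correct joint law.

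Your instinct about the endgame is right: once you have the integral of Proposition \ref{BDdensity}, you integrate out each $s_j$ (Lemma \ref{integrateout}) and then apply a partial-fraction identity (Lemma \ref{partialfrac2}) to obtain the closed form with the logarithms. But the integral you should be feeding into that machinery comes from the $e^{-zN_T}$ computation, not from survival factors.
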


The case of the Yule tree, in which $\beta=1$ and $\alpha=0$, gives simpler formulas for the split times.

\begin{ex}[Yule tree]
Suppose that $\alpha=0$ and $\beta=1$. Then for any $s\in(0,T]$,
\[\P(\tilde\S^2_1(T) \ge s \,|\, N_T\ge 2) = \frac{2(e^{-s}-e^{-T})(e^{-s}-1+s)}{(1-e^{-T})(1-e^{-s})^2}\]
and for any $s_1,s_2\in(0,T]$,
\begin{align*}
&\P(\tilde\S^3_1(T) \ge s_1,\, \tilde\S^3_2(T) \ge s_2 \,|\, N_T\ge 3)\\
&= 3(e^{-s_1}\hspace{-0.5mm}-\hspace{-0.5mm}e^{-T})(e^{-s_2}\hspace{-0.5mm}-\hspace{-0.5mm}e^{-T})\frac{\big(s_1(1\hspace{-0.8mm}-\hspace{-0.8mm}e^{-s_2})^2 - s_2(1\hspace{-0.8mm}-\hspace{-0.8mm}e^{-s_1})^2 + (1\hspace{-0.8mm}-\hspace{-0.8mm}e^{-s_1})(1\hspace{-0.8mm}-\hspace{-0.8mm}e^{-s_2})(e^{-s_2}\hspace{-0.8mm}-\hspace{-0.8mm}e^{-s_1})\big)}{(1-e^{-T})^2(1-e^{-s_1})^2(1-e^{-s_2})^2(e^{-s_2}-e^{-s_1})}.
\end{align*}
\end{ex}

Returning to general $\alpha\neq\beta$, the case $k=2$, mentioned in the introduction, is of particular interest. Note that when $k=2$, there is only one split time, so the choice of ordered or unordered is irrelevant. To be consistent with the description in the introduction we write $\S(T) = \S^2_1(T)$. Taking a limit as $T\to\infty$ simplifies the formula significantly, although we have to consider the supercritical and subcritical cases separately.

\begin{ex}[Supercritical birth-death, $T\to\infty$]
Suppose that $\beta>\alpha$. Then for any $s> 0$,
\[\lim_{T \to \infty} \P( \S(T) \ge s \,|\, N_T \geq 2 ) = \frac{ 2e^{-(\beta-\alpha)s} }{ (1-e^{-(\beta-\alpha)s})^2 }\big( (\beta-\alpha)s - 1 + e^{-(\beta-\alpha)s} \big).\]
\end{ex}

\begin{ex}[Subcritical birth-death, $T\to\infty$]
Suppose that $\alpha>\beta$. Then for any $s> 0$,
\[\lim_{T \to \infty}  \mathbb{P} ( \S(T) \ge T - s \,|\, N_T \geq 2 )  = \frac{2\alpha^2}{\beta^2} (e^{(\alpha-\beta)s} - 1) \Big( e^{(\alpha-\beta)s} \log \Big( 1 + \frac{\beta}{\alpha e^{(\alpha-\beta)s} - \beta } \Big) - \frac{\beta}{\alpha} \Big).\]
\end{ex}

To our knowledge all of these results are new. We note (as Durrett also mentioned in \cite{durrett:genealogy}) that in the supercritical case, the time $\S(T)$ is likely to be near $0$, whereas in the subcritical case, $\S(T)$ is likely to be near $T$. This much is to be expected, but the detailed behaviour is perhaps more surprising: as mentioned in the introduction, some elementary calculations using the formulas above show that in the supercritical case,
\[\lim_{T\to\infty}\P(\S(T) \ge s\,|\,N_T\ge 2)\sim 2(\beta-\alpha)s e^{-(\beta-\alpha)s} \,\,\hbox{ as }\,\, s\to\infty,\]
whereas in the subcritical case,
\[\lim_{T\to\infty}\P(T-\S(T) \ge s\,|\,N_T\ge 2)\sim \Big(1-\frac{2\beta}{3\alpha}\Big)e^{-(\alpha-\beta)s} \,\,\hbox{ as }\,\, s\to\infty.\]

We can also give analogous results in the critical case $\alpha=\beta$.

\begin{thm}\label{critBDthm}
Suppose that $\alpha=\beta$. For any $s_1,\ldots,s_{k-1}\in(0,T]$ with $s_i\neq s_j$ for $i\neq j$, the unordered split times are independent of $\mathcal H$ and satisfy
\begin{multline*}
\P(\tilde \S^k_1(T)/T \ge s_1, \ldots, \tilde\S^k_{k-1}(T)/T \ge s_{k-1} \,|\, N_T\ge k)\\
\hspace{6mm}= k \Big(1+\frac{1}{\beta T}\Big)^k \Bigg[ \frac{1}{1+1/T}\prod_{i=1}^{k-1}\Big(1-\frac{1}{s_i}\Big)+ \sum_{j=1}^{k-1}\frac{1-s_j}{s_j^2}\bigg(\prod_{\substack{i=1\\ i\neq j}}^{k-1} \frac{1-s_i}{s_j-s_i}\bigg)\log\Big(\frac{1+1/T}{1-s_j+1/T}\Big)\Bigg].
\end{multline*}
Furthermore, the partition process $P^k_0(T),P^k_1(T),\ldots,P^k_{k-1}(T)$ has the following description:
\begin{itemize}
\item if $P^k_i(T)$ contains blocks of sizes $a_1,\ldots,a_{i+1}$, the probability that the next block to split will be block $j$ is $\frac{a_j-1}{k-i-1}$;
\item if a block of size $a$ splits, it creates two blocks whose (ordered) sizes are $l$ and $a-l$ with probability $1/(a-1)$ for each $l=1,\ldots,a-1$.
\end{itemize}
\end{thm}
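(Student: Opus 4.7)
The plan is to deduce Theorem \ref{critBDthm} from Theorem \ref{noncritBDthm} by passing to the limit $\alpha \to \beta$ at fixed $\beta$, with the split-time arguments rescaled by a factor of $T$. On any finite time interval the law of the Galton-Watson tree depends continuously on the offspring parameters, so both sides of the non-critical identity are continuous in $\alpha$ at $\alpha = \beta$ (for fixed $T,k,\beta,s_1,\ldots,s_{k-1}$), and passing to the limit on each side yields the critical statement.

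Concretely, set $\lambda = \beta - \alpha$ and apply the non-critical formula with $T s_j$ in place of $s_j$, so that $E_j = e^{\lambda T(1-s_j)}$. As $\lambda \to 0$ each elementary difference scales linearly in $\lambda$:
\[
E_j - 1 \sim \lambda T(1-s_j), \quad E_i - E_j \sim \lambda T(s_j-s_i), \quad E_0 - \alpha/\beta \sim \lambda(T + 1/\beta), \quad \beta E_j - \alpha \sim \lambda(\beta T(1-s_j) + 1).
\]
The prefactor $k(E_0-\alpha/\beta)^k/(E_0-1)^{k-1}$ is then of order $\lambda$, while the two classes of bracket terms are individually of order $\lambda^{-1}$: one contribution comes from $1/(E_0-\alpha/\beta)$ in the first summand, the other from $1/(E_j-E_0)^2$ inside each logarithmic term. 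The $\lambda$ factors cancel exactly, and the surviving ratios $(E_i-1)/(E_i-E_0)$, $(E_i-1)/(E_i-E_j)$ and $\log((\beta E_0-\alpha)/(\beta E_j-\alpha))$ limit to the rational and logarithmic expressions appearing in the critical identity.

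The topological statement (the two bullet points describing the partition process) is regime-independent. It follows from the $k$-spine change of measure of Section \ref{CoMsec}: under that measure, because offspring are always binary, the sequence of block splits is governed by a size-biased combinatorial mechanism that does not depend on the exponential rates, and so the argument used in the non-critical case transfers verbatim.

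The main technical care required is in the Taylor expansion: because several bracket terms individually diverge like $\lambda^{-1}$, the cancellations must be tracked before taking the limit in order to avoid spurious singularities. The cleanest bookkeeping is to factor a $\lambda$ out of each elementary difference at the outset, writing $E_j - 1 = \lambda a_j(\lambda)$, $E_0 - \alpha/\beta = \lambda b(\lambda)$, $E_i - E_j = \lambda c_{ij}(\lambda)$, and so on, with $a_j, b, c_{ij}$ smooth at $\lambda=0$; the entire expression becomes a $\lambda$-independent rational function whose limit is obtained by evaluating these functions at $0$. An independent proof is of course also available by repeating the spine framework of Section \ref{CoMsec} directly in the critical setting, using the explicit survival probabilities $\P(N_T \ge 1) = 1/(1+\beta T)$ and $\P(N_T \ge k) = (\beta T)^{k-1}/(1+\beta T)^k$, but this would largely duplicate the work of the non-critical proof.
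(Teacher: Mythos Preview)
Your approach is correct but takes a genuinely different route from the paper. The paper does not deduce the critical formula by passing to the limit $\alpha\to\beta$ in Theorem~\ref{noncritBDthm}; instead it repeats the entire computation directly in the critical case. Specifically, it invokes the critical branch of Proposition~\ref{BDdensity} to obtain the density of the ordered split times, converts to unordered times via Lemma~\ref{unordering}, integrates each $s_j$ out of the integral representation, and then evaluates the resulting one-dimensional integral using the second part of Lemma~\ref{partialfrac2} with $e_j = T - s_j$. The topological statement is proved as part of Lemma~\ref{QtopologyBD}, which treats $\alpha=\beta$ and $\alpha\neq\beta$ side by side using \eqref{critBDratio} in place of \eqref{Pratio}.

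Your limit argument is a legitimate alternative and is more economical: it avoids redoing the integral calculus and simply reads off the critical formula by cancelling the common $\lambda$-scaling in the building blocks $E_j-1$, $E_i-E_j$, $E_0-\alpha/\beta$, and $\beta E_j-\alpha$. The factoring trick you describe (writing each difference as $\lambda$ times a function smooth and nonvanishing at $\lambda=0$) is exactly the right way to make the $0/0$ cancellation rigorous. The continuity of the left-hand side in $\alpha$ is straightforward here since $\P(N_T\ge k)>0$ for all $\alpha$ near $\beta$ and the split times have a density, so the conditional tail probabilities are continuous in the parameter; you might state this slightly more explicitly. The paper's direct approach, by contrast, has the advantage that all the supporting lemmas (\ref{Qsplitdist}, \ref{QtopologyBD}, Proposition~\ref{BDdensity}) already carry the critical case, so nothing additional needs to be argued---the price is repeating essentially the same sequence of manipulations a second time.
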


\begin{ex}
Suppose that $\alpha=\beta$. Then for any $s>0$
\[\P(\tilde\S^2_1(T)/T \ge s \,|\, N_T\ge 2) = 2\Big(1+\frac{1}{\beta T}\Big)^2 \Big(\frac{1-s}{s^2}\Big)\Big(\log\Big(\frac{1+1/T}{1-s+1/T}\Big) - \frac{s}{1+1/T}\Big)\]
and for any $s_1,s_2>0$,
\begin{multline*}
\P(\tilde\S^3_1(T)/T \ge s_1,\, \tilde\S^3_2(T)/T \ge s_2 \,|\, N_T\ge 3)\\
= \frac{3(1+\frac{1}{\beta T})^3(1-s_1)(1-s_2)}{s_1^2 s_2^2 (s_2-s_1)} \bigg[ s_2^2\log\Big(\frac{1-s_1+\frac{1}{T}}{1+\frac{1}{T}}\Big) - s_1^2 \log\Big(\frac{1-s_2+\frac{1}{T}}{1+\frac{1}{T}}\Big) + \frac{s_1 s_2 (s_2-s_1)}{1+\frac{1}{T}}\bigg].
\end{multline*}
\end{ex}

We can easily let $T\to\infty$ in these formulas, but in the critical case---and even in near-critical cases---if we are willing to take a scaling limit as $T\to\infty$ then we can work much more generally.

\subsection{Near-critical processes: a scaling limit}\label{nearcritresultssec}

We no longer restrict to birth-death processes; the birth distribution $L$ may take any (non-negative integer) value. In order to consider a scaling limit, we take Galton-Watson processes that are \emph{near-critical}, in that the mean number of offspring is approximately $1+\mu/T$ for some $\mu\in\R$. We also insist that the variance converges. Conditional on survival to time $T$, we sample $k$ particles uniformly without replacement, and ask for the structure of the genealogical tree generated by these $k$ particles. In other branching models when the population is kept constant, it has been shown that the resulting coalescent process converges as $T\to\infty$ to Kingman's coalescent \cite{schweinsberg2003coalescent}. We see something slightly different.

To state our result precisely, we need some more notation. Fix $\mu \in \mathbb{R}$ and $\sigma>0$. Suppose that for each $T>0$, the offspring distribution $L$ satisfies
\begin{itemize}
\item $\P_T[L] = 1 + \mu/T + o(1/T)$
\item $\P_T[L (L - 1)] = \sigma^2 + o(1)$
\item $L^2$ is uniformly integrable under $\P_T$: that is, for any $\eps>0$, there exists $K$ such that
\[\P_T[L^2\ind_{\{L>K\}}]<\epsilon \text{ for all } T.\]
\end{itemize}

\begin{thm}[Near-critical scaling limit]\label{nearcritthm}
Suppose that the conditions above hold. Then the split times are asymptotically independent of $\mathcal H$, and if $\mu\neq 0$, then for any $s_1,\ldots,s_{k-1}\in(0,1)$ with $s_i\neq s_j$ for any $i\neq j$,
\begin{multline*}
\lim_{T\to\infty} \P_T(\tilde \S^k_1(T)/T \ge s_1, \ldots, \tilde\S^k_{k-1}(T)/T \ge s_{k-1} \,|\, N_T\ge k)\\
= k \prod_{i=1}^{k-1} \frac{E_i}{E_i-E_0} + k\sum_{j=1}^{k-1} \frac{E_0 E_j}{(E_j-E_0)^2} \bigg( \prod_{\substack{i=1\\ i\neq j}}^{k-1} \frac{E_i}{E_i-E_j}\bigg)\log\frac{E_0}{E_j}
\end{multline*}
where $E_j = e^{r\mu(1-s_j)}-1$ for each $j=0,\ldots,k-1$ and $s_0=0$. If $\mu=0$, then instead
\begin{multline*}
\lim_{T\to\infty} \P(\tilde \S^k_1(T)/T \ge s_1, \ldots, \tilde\S^k_{k-1}(T)/T \ge s_{k-1} \,|\, N_T\ge k)\\
\hspace{10mm}= k \prod_{i=1}^{k-1} \frac{s_i-1}{s_i} - k\sum_{j=1}^{k-1} \frac{1-s_j}{s_j^2} \bigg( \prod_{\substack{i=1\\ i\neq j}}^{k-1} \frac{1-s_i}{s_j-s_i}\bigg)\log(1-s_j).
\end{multline*}
Furthermore, the partition process $P^k_0(T),P^k_1(T),\ldots,P^k_{k-1}(T)$ has the following description:
\begin{itemize}
\item if $P^k_i(T)$ contains blocks of sizes $a_1,\ldots,a_{i+1}$, the probability that the next block to split will be block $j$ converges as $T\to\infty$ to $\frac{a_j-1}{k-i-1}$;
\item if a block of size $a$ splits, with probability tending to $1$ it creates two blocks whose (ordered) sizes are $l$ and $a-l$ with probability converging to $\frac{1}{a-1}$ for each $l=1,\ldots,a-1$.
\end{itemize}
\end{thm}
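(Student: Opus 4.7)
The plan is to reduce Theorem \ref{nearcritthm} to an explicit asymptotic computation by passing to the $k$-spine change of measure $\Q^{k,T}$ developed in Section \ref{CoMsec}. Under $\Q^{k,T}$, the branching tree carries $k$ distinguished spines whose coalescent structure has the same law (up to combinatorial factors) as that of the $k$ uniformly sampled particles $U^1_T,\ldots,U^k_T$ under $\P_T$; the Campbell-type formula then converts any probability of an event $A$ depending on the sampled particles and on $\{N_T\ge k\}$ into an expression schematically of the form
$$\P_T\!\left[A,\, N_T\ge k\right] \;=\; c_{T,k}\cdot\Q^{k,T}\!\left[\frac{\ind_A}{N_T(N_T-1)\cdots(N_T-k+1)}\right],$$
i.e.\ sampling $k$ particles uniformly is represented as a spine construction weighted by the inverse falling factorial $1/N_T(N_T-1)\cdots(N_T-k+1)$, with $c_{T,k}$ an explicit Radon--Nikodym prefactor. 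The conditional probability appearing in the theorem is then the ratio of this expression to $\P_T(N_T\ge k)$, which has the classical near-critical Yaglom-type asymptotic $\P_T(N_T\ge k)\sim c_{\mu,\sigma}/T$.

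Next I would analyse the skeleton under $\Q^{k,T}$. Conditional on the skeleton joining the spines, the rest of the tree is a collection of independent near-critical Galton--Watson subtrees grafted onto the skeleton, so we may write
$$N_T \;=\; k \;+\; \sum_{\ell} Z^{(\ell)}_T,$$
where each $Z^{(\ell)}_T$ is the population at time $T$ of an independent near-critical tree launched at an earlier time determined by the skeleton geometry. By exchangeability built into the spine construction, the partition process $P^k_\bullet(T)$ has exactly the urn-type splitting rules stated in the theorem, and these rules do not depend on $T$ or on the split times, which forces the asymptotic independence of $\mathcal H$ from the $\tilde\S^k_i(T)$. For the split-time distribution itself, each subtree launched at scaled time $s_j T$ contributes an expected descendant mass of order $T\cdot e^{r\mu(1-s_j)}$ at time $T$ --- with $Z^{(\ell)}_T/T$ converging jointly in distribution to independent Feller-diffusion masses under the hypothesised $L^2$-uniform integrability --- which is precisely what produces the factors $E_j = e^{r\mu(1-s_j)}-1$ after normalisation by $N_T(N_T-1)\cdots(N_T-k+1)$. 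Integrating the resulting joint density over the simplex of skeleton split times and reducing via the symmetric partial-fraction identity
$$\prod_{i=0}^{k-1}\frac{1}{E_i - x} \;=\; \sum_{j=0}^{k-1}\frac{1}{\prod_{i\ne j}(E_j - E_i)}\cdot\frac{1}{E_j - x}$$
produces the stated formula, with the $\log$ terms arising from the logarithmic antiderivatives in this reduction --- an identical mechanism to the birth-death computation underlying Theorem \ref{noncritBDthm}. The $\mu=0$ formula drops out by letting $\mu\to 0$ and Taylor-expanding $E_j = r\mu(1-s_j) + O(\mu^2)$.

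The main obstacle is justifying the passage to the limit inside the $\Q^{k,T}$-expectation $\Q^{k,T}[T^{k-1}/N_T(N_T-1)\cdots(N_T-k+1)]$: under $\Q^{k,T}$ we always have $N_T\ge k$, so pointwise convergence of the integrand is immediate, but a dominated-convergence argument requires an integrable envelope and enough uniform control on how $N_T/T$ stays away from $0$ on the near-critical scale. The $L^2$-uniform-integrability hypothesis on $L$ is exactly what delivers this control, while simultaneously giving the joint distributional limit of the $Z^{(\ell)}_T/T$ to independent Feller-diffusion variables, which finally decorrelates the split times conditionally on the skeleton topology. This is the ``decorrelation under $\Q^{k,T}$'' alluded to in the introduction, and it is what allows the multi-spine construction to sidestep the population-fluctuation correlations that, working under $\P_T$ directly, obstruct the generalisation of Durrett's approach \cite{durrett:genealogy} beyond $k=2$.
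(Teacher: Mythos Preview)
Your outline has the right architecture --- pass to $\Q^{k,T}$, analyse the spine skeleton, then undo the change of measure --- but several steps are either incorrect as stated or miss the mechanism that actually makes the computation go through.

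First, your claim that ``the partition process $P^k_\bullet(T)$ has exactly the urn-type splitting rules stated in the theorem, and these rules do not depend on $T$'' is false for general offspring distributions. This holds exactly only in the binary (birth-death) case. For general $L$, at each spine-splitting event the marks can partition into three or more groups with positive probability, and the exact splitting law under $\Q^{k,T}$ (Lemma~\ref{Qdescription}, Corollary~\ref{Qdesccor}) depends on $T$ through the factorial moments $\P_T[N_{T-t}^{(a_i)}]$. The urn rules emerge only \emph{asymptotically}: one must show (as in Lemma~\ref{splitsdistinct}) that non-binary splits have probability $o(1)$, which uses the moment bound $\P_T[L^{(j)}]=o(T^{j-2})$ for $j\ge 3$. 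This bound is not a consequence of the stated hypotheses and requires a separate truncation argument (Lemma~\ref{momrelax}): uniform integrability of $L^2$ is used to couple the original tree to one with $L$ bounded by $J(T)=o(T)$, after which the moment bound is automatic.

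Second, you are missing the device that converts the awkward $1/N_T^{(k)}$ weight into something tractable. The paper does not work with the subtree masses $Z^{(\ell)}_T$ directly or invoke Feller-diffusion limits; instead it uses the identity
\[\frac{1}{N_T^{(k)}}=\frac{1}{(k-1)!}\int_0^\infty (e^z-1)^{k-1}e^{-zN_T}\,\d z\]
(Lemma~\ref{recip}), which replaces the reciprocal by an integral of Laplace transforms. Given $\G^k_T$, the Laplace transform $\Q^{k,T}[e^{-z\tilde N_T}\mid\G^k_T]$ factorises as a product over the $k$ spine branches (Proposition~\ref{Qmgf_gen_prop}), and each factor can be evaluated asymptotically via the generating-function estimate in Lemma~\ref{fTconv}. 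Without this integral representation the factorisation you need simply does not appear; your partial-fraction identity is then applied not to $\prod 1/(E_i-x)$ but to an integral of the form $\int_0^\infty (1+\theta e_0)^{-2}\prod_{j\ge 1}\big(1-(1+\theta e_j)^{-1}\big)\,\d\theta$ (Lemma~\ref{partialfrac2}), which is where the product and $\log$ terms in the final formula come from.

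Finally, the dominated-convergence step is not handled by showing $N_T/T$ stays away from $0$. The paper instead observes that setting $f\equiv 1$ in the integral representation forces $\int_0^\infty B(\phi,T)\,\d\phi$ to equal a known constant for every $T$, and since $B(\phi,T)$ converges pointwise to an integrable limit with the same integral, the generalised dominated convergence theorem applies to the bounded integrand $A(\phi,T)\le B(\phi,T)$.
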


In Theorems \ref{noncritBDthm} and \ref{critBDthm} we saw that the split times were independent of $\mathcal H$. This cannot be the case in Theorem \ref{nearcritthm}, since two or more split times may be equal with positive probability, an event which is captured by both the split times and the topological information $\mathcal H$. However we do see that the split times are \emph{asymptotically} independent, in that $\P_T(A\cap B) \to \P_T(A)\P_T(B)$ for any $A\in\sigma(\S^k_1(T),\ldots,\S^k_{k-1}(T))$ and $B\in \mathcal H$, which is the best that we can hope for.

We note here that the topology of the (limiting) tree described forwards in time in Theorem \ref{nearcritthm} is the same as that described backwards in time by Kingman's coalescent; but the times of splits (or times of mergers, in the coalescent picture) are drastically different.

In the case that the process is actually critical we recover the following simple formula for the split times.

\begin{ex}[Critical processes]
Suppose that $\P[L]=1$ and $\P[L^2]<\infty$. Then for any $s\in(0,1)$,
\begin{equation}\label{critk2}
\lim_{T\to\infty} \P(\S(T)/T \ge s \,|\, N_T\ge 2) = \frac{2(s-1)}{s^2}\big(\log(1-s)+s\big).
\end{equation}
\end{ex}

\begin{ex}[Near-critical scaling limit, $k=2$]\label{nearcritk2}
Suppose that the conditions of Theorem \ref{nearcritthm} hold with $\mu\neq 0$. Then for any $s\in(0,1)$,
\begin{multline*}
\lim_{T\to\infty} \P_T(\S(T)/T \ge s \,|\, N_T\ge 2)\\
 = 2\Big(\frac{e^{r\mu(1-s)}-1}{e^{r\mu(1-s)}-e^{r\mu}}\Big) + 2 \frac{(e^{r\mu}-1)(e^{r\mu(1-s)}-1)}{(e^{r\mu(1-s)}-e^{r\mu})^2} \log\Big(\frac{e^{r\mu}-1}{e^{r\mu(1-s)}-1}\Big).
\end{multline*}
\end{ex}

Both of these examples are known, but to our knowledge the general formula is not. We give more details in Section \ref{comparisonsec}.

\subsection{Construction of the near-critical scaling limit}\label{limitconstructionsec}

In this section we investigate further the scaling limit observed in Theorem \ref{nearcritthm}. Our aim is to give a more intuitive probabilistic understanding of the scaling limit, rather than the explicit formulas seen in Theorems \ref{noncritBDthm} to \ref{nearcritthm}.

We work under the conditions of Section \ref{nearcritresultssec}: we fix $\mu \in \mathbb{R}$ and $\sigma>0$, and suppose that for each $T>0$ the offspring distribution $L$ satisfies
\begin{itemize}
\item $\P_T[L] = 1 + \mu/T + o(1/T)$
\item $\P_T[L (L - 1)] = \sigma^2 + o(1)$
\item $L^2$ is uniformly integrable under $\P_T$.
\end{itemize}
Theorem \ref{nearcritthm} says that the rescaled unordered split times, conditional on at least $k$ particles being alive at time $T$, converge jointly in distribution to an explicit limit,
\[\Big(\frac{\tilde \S^k_1(T)}{T},\ldots,\frac{\tilde \S^k_{k-1}(T)}{T}\Big) \xrightarrow{(d)} (\tilde \S^k_1,\ldots,\tilde\S^k_{k-1}).\]
We aim to shed some more light on this limit. First we note that, although the split times (for fixed $T$) do not usually have a joint density---with positive probability one split time may equal another---their scaling limit \emph{does} have a density. Indeed, from the proof of Theorem \ref{nearcritthm} (or by checking directly) we see that this density satisfies (with $s_0=0$)
\[f_k(s_1,\ldots,s_{k-1}) = \begin{cases} \displaystyle k(r\mu)^{k-1}(1-e^{-r\mu}) \int_0^\infty \theta^{k-1} \prod_{i=0}^{k-1} \frac{e^{r\mu(1-s_i)}}{(1+\theta(e^{r\mu(1-s_i)}-1))^2}\,\d\theta & \hbox{ if } \mu > 0\\
                                          \displaystyle k \int_0^\infty \theta^{k-1} \prod_{i=0}^{k-1} \frac{1}{(1+\theta(1-s_i))^2}\,\d\theta & \hbox{ if } \mu = 0\\
                                          \displaystyle k(-1)^k(r\mu)^{k-1}(1-e^{-r\mu})\hspace{-1.5mm} \int_0^\infty \hspace{-2mm}\theta^{k-1}\hspace{-1mm} \prod_{i=0}^{k-1} \frac{e^{r\mu(1-s_i)}}{(1-\theta(e^{r\mu(1-s_i)}-1))^2}\d\theta & \hbox{ if } \mu < 0.
\end{cases}\]

The following proposition gives a construction of the scaling limit of the tree in the critical case $\mu=0$, in the spirit of Aldous' construction of Kingman's coalescent \cite[Section 4.2]{aldous:coalescence_review}. In particular it gives a method for consistently constructing the times $(\tilde \S^k_1,\ldots,\tilde\S^k_{k-1})$.

\begin{thm}[A construction for critical genealogies]\label{maxconcrit}
Suppose that $\mu=0$. Let $X_1,X_2,\ldots$ be a sequence of independent and identically distributed random variables on $(0,\infty)$ with density $(1+x)^{-2}$. Let $M_k = \max_{i\le k} X_i$, and choose $I$ such that $X_I=M_k$. For $i\le k$ define $T_i = 1-X_i/M_k$. Then $(T_1,\ldots,T_{I-1},T_{I+1},\ldots,T_k)$ is equal in distribution to $(\tilde \S^k_1,\ldots,\tilde\S^k_{k-1})$ in the critical case $(\mu=0$).

Moreover, the ancestral tree drawn out by the $k$ uniformly chosen particles has the following description: let $U_1,U_2,\ldots$ be independent uniform random variables on $[0,1]$. Within the unit square, for each $1\le i\le k$, draw a vertical line from $(U_i,0)$ to $(U_i,1-T_i)$. These lines represent the branches of our tree. Now, for each $1\le i\le k-1$, draw a horizontal line starting from $(U_i,T_i)$ towards $(U_I,T_i)$ but stopping as soon as it hits another (vertical) line (see Figure \ref{aldPfig} below).
\end{thm}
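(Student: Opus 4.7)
The plan is to first establish the distributional identity for the times $(T_1,\ldots,T_{I-1},T_{I+1},\ldots,T_k)$ by matching joint densities against the explicit formula for $f_k$ in the critical case recorded in Section \ref{limitconstructionsec}, and then separately verify that the planar construction produces the correct Kingman-type topology.

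For the density identification, I would exploit exchangeability of $X_1,\ldots,X_k$ to reduce to the conditional distribution given $\{I=1\}$. Each $X_i$ has density $f(x)=1/(1+x)^2$ and cdf $F(x)=x/(1+x)$, so
\[
\P(I=1,\,M_k\in\d m) \;=\; f(m)\,F(m)^{k-1}\,\d m \;=\; \frac{m^{k-1}}{(1+m)^{k+1}}\,\d m.
\]
In particular $\P(I=1)=1/k$, so conditional on $\{I=1\}$ the variable $M_k$ has density $k m^{k-1}/(1+m)^{k+1}$ and, given $M_k=m$, the variables $X_2,\ldots,X_k$ are i.i.d.\ with density $(1+m)/(m(1+x)^2)$ on $(0,m)$. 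The change of variables $T_j = 1 - X_j/m$ (with Jacobian $m$) gives each $T_j$ the conditional density $(1+m)/(1+m(1-t_j))^2$ on $(0,1)$. Multiplying over $j=2,\ldots,k$ and integrating against the density of $M_k$ gives
\[
k \int_0^\infty \theta^{k-1} \prod_{i=0}^{k-1}\frac{1}{(1+\theta(1-s_i))^2}\,\d\theta,
\]
where the term $i=0$ with $s_0=0$ arises from the factor $1/(1+m)^2$ in the density of $M_k$ and $s_i=t_{i+1}$ for $1\le i\le k-1$. This is exactly the density $f_k$ recorded for $\mu=0$, which by Theorem \ref{nearcritthm} is the density of $(\tilde S^k_1,\ldots,\tilde S^k_{k-1})$. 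Since $I$ is uniform on $\{1,\ldots,k\}$ and the computation just given is symmetric under relabelling, the unconditional distribution of the exchangeable tuple $(T_j)_{j\neq I}$ also has this density.

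For the topological statement, I would note that, conditional on $(T_1,\ldots,T_k)$ and on $I$, the positions $U_1,\ldots,U_k$ remain i.i.d.\ uniform on $[0,1]$. The horizontal-line rule---starting from $(U_i,T_i)$ and stopping at the first vertical line encountered in the direction of $(U_I,T_i)$---is a variant of Aldous' planar construction of Kingman's coalescent \cite[Section 4.2]{aldous:coalescence_review}. A short induction, inserting the leaves in order of decreasing $T_i$, shows that at each stage a block splits with probability proportional to its size and the two sides of the split are uniformly distributed, which are precisely the two bullet points of Theorem \ref{nearcritthm}. Moreover the resulting topology depends only on the $U_i$'s and on the \emph{ranks} of the $T_i$'s, which are uniform and independent of the values of the $T_i$, so topology and split times are independent.

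The main obstacle, I expect, is not any single step but the bookkeeping required to match an \emph{unordered} tuple on the right with the exchangeable but indexed vector produced by the construction, together with a careful verification that the nearest-vertical-line attachment rule really does produce the uniform binary topology rather than a planar variant biased by the roles of $I$ or the $U_i$'s. Both points are handled by the symmetry arguments above, but they are the places where one has to be most attentive.
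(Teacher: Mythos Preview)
Your approach is correct and essentially identical to the paper's: integrate over the value of the maximum to recover the density $f_k$, then verify the Kingman topology by an inductive line-insertion argument in the style of Aldous. One small slip to fix when you write it out: the block of size $a_j$ splits next with probability proportional to $a_j-1$, not to $a_j$; the paper's induction runs top-down (tallest line first, i.e.\ increasing $T_i$), which makes this easiest to read off.
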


\begin{figure}[h!]
  \centering
   \includegraphics[width=5cm]{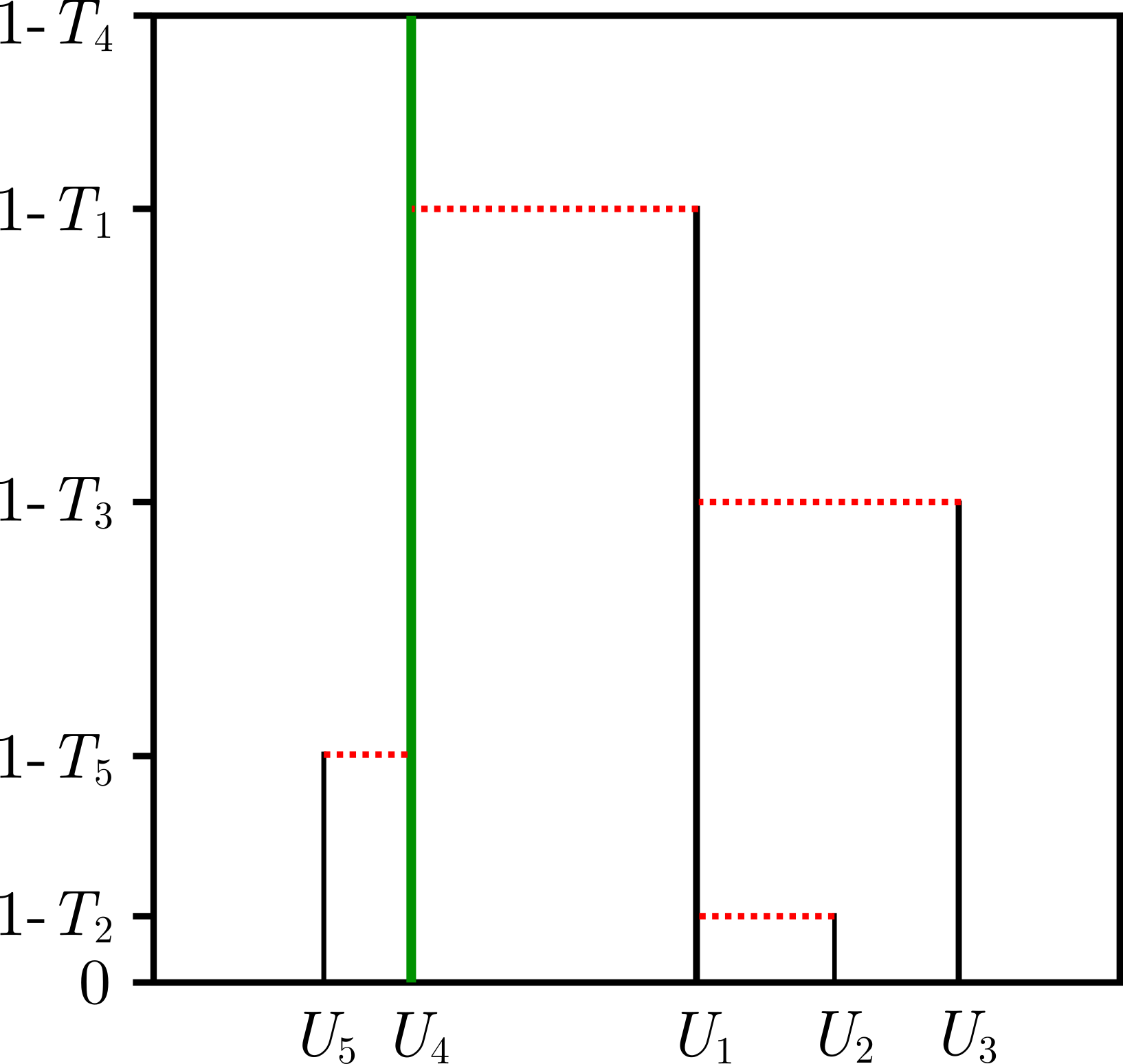}
   \vspace{0mm}
  \caption{\small{A representation of the rescaled tree drawn out by $5$ particles chosen uniformly at random from those alive at a large time. Here $I=4$.}}
\label{aldPfig}
  \end{figure}

This result, in particular, clarifies the consistency of the split times. Of course, if we choose $k+1$ particles uniformly without replacement at time $T$, and then forget one of them, the result should be consistent with choosing $k$ particles originally. This is not immediately obvious from the distribution function given in Theorem \ref{nearcritthm}, but it follows easily from the construction in Theorem \ref{maxconcrit}.

\begin{rmk}
In the above construction the scale of the horizontal axis has no meaning; any permutation of the vertical lines could replace the random variables $U_0,\dots,U_{k-1}$ and give the same Kingman tree topology. Indeed, the tallest (green) line could just as well be fixed, say as the leftmost, and the remaining vertical lines randomly permuted without changing the tree topology. Nevertheless, in Section \ref{heuristics}, we will describe a construction under $\Q^k$ where the gaps on the horizontal axis between the vertical lines can be interpreted as the population size: see Figure \ref{Qpopconst} and the discussion immediately beforehand.
\end{rmk}

We can do something similar when $\mu\neq 0$.

\begin{thm}[A construction for near-critical genealogies]\label{maxconnoncrit}
Suppose that $\mu\neq0$. Let $X_1,X_2,\ldots$ be a sequence of independent and identically distributed random variables on $(0,\infty)$ with density $(1+x)^{-2}$. Let $M_k = \max_{i\le k} X_i$, and choose $I$ such that $X_I=M_k$. For $i\le k$ define
\[T_i = 1-\frac{1}{r\mu}\log\Big(1+(e^{r\mu}-1)\frac{X_i}{M_k}\Big).\]
Then $(T_1,\ldots,T_{I-1},T_{I+1},\ldots,T_k)$ is equal in distribution to $(\tilde \S^k_1,\ldots,\tilde\S^k_{k-1})$.

Moreover, the ancestral tree drawn out by the $k$ uniformly chosen particles has the same construction as in Theorem \ref{maxconcrit}.
\end{thm}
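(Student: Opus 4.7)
The plan is to directly compute the joint density of $(T_j)_{j\neq I}$ produced by the construction and verify that it matches the explicit density $f_k(s_1,\ldots,s_{k-1})$ for the limiting split times displayed in the paragraph preceding Theorem \ref{maxconcrit}. Once the distribution of the split times is matched, the statement about the tree topology reduces to Theorem \ref{maxconcrit} because the time-change $x \mapsto 1 - \frac{1}{r\mu}\log(1 + (e^{r\mu}-1) x)$ used to define the $T_i$ from the ratios $X_i/M_k$ is strictly monotone, hence preserves the ordering of the split heights and of the indices of the vertical lines in the construction.

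\textbf{Step 1: conditioning on $(M_k, I)$.} The $X_i$ are i.i.d.\ with CDF $F(x) = x/(1+x)$, so $M_k$ has density $\frac{k m^{k-1}}{(1+m)^{k+1}}$, and by exchangeability the maximising index $I$ is uniform on $\{1,\ldots,k\}$ and is independent of $M_k$. Conditional on $M_k = m$ and $I = i_0$, the remaining $X_j$ for $j\neq i_0$ are i.i.d.\ with density $\frac{1+m}{(1+x)^2}$ on $(0,m)$, so the ratios $Y_j = X_j/m$ are i.i.d.\ on $(0,1)$ with density $\frac{1+m}{(1+my)^2}$. Writing $E_j = e^{r\mu(1-t_j)} - 1$ and $E_0 = e^{r\mu}-1$, the transformation $T_j = 1 - \frac{1}{r\mu}\log(1 + E_0 Y_j)$ yields $Y_j = E_j/E_0$ with Jacobian $|dy_j/dt_j| = r\mu(E_j+1)/E_0$ (for $\mu > 0$; the sign for $\mu<0$ is handled identically once one notes $E_0 < 0$ and $Y_j \in (0,1)$ still correspond to $T_j \in (0,1)$).

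\textbf{Step 2: assembling and recognising the density.} By exchangeability the joint law of $(T_j)_{j\neq I}$ coincides with the conditional law given $I = k$, so integrating out $M_k$ gives
\bal
f_{(T_1,\ldots,T_{k-1})}(t_1,\ldots,t_{k-1})
&= \int_0^\infty \frac{k m^{k-1}}{(1+m)^{k+1}}\prod_{j=1}^{k-1}\frac{(1+m) r\mu (E_j+1)/E_0}{(1+mE_j/E_0)^2}\,\d m.
\eal
The substitution $\theta = m/E_0$ (so $m = \theta E_0$, $\d m = E_0\, \d\theta$) simplifies $(1+m)^2 = (1+\theta E_0)^2$ and $(1+mE_j/E_0)^2 = (1+\theta E_j)^2$, and with $s_0 = 0$ (so that $E_0 + 1 = e^{r\mu}$) the expression collapses, after cancelling powers of $E_0$, to
\bal
k(r\mu)^{k-1}\frac{E_0}{E_0+1}\int_0^\infty \theta^{k-1}\prod_{i=0}^{k-1}\frac{E_i+1}{(1+\theta E_i)^2}\,\d\theta,
\eal
and $E_0/(E_0+1) = 1 - e^{-r\mu}$ matches precisely the formula for $f_k$ displayed before Theorem \ref{maxconcrit} in the $\mu>0$ case. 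The case $\mu<0$ is identical once one tracks signs in the Jacobian and in the corresponding integral representation of $f_k$.

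\textbf{Step 3: the tree topology.} Because $T_j$ is a strictly monotone (decreasing) function of $X_j$ given $M_k$, the relative ordering of the heights $T_j$ equals that of the ratios $X_j/M_k$, and in particular $T_I = 0$ is the smallest. Hence the algorithm producing the ancestral tree (draw vertical lines at heights $1 - T_j$ and successively attach horizontal segments at heights $T_j$ to the nearest vertical line on the way to the tallest line at $U_I$) yields the same combinatorial object as in the critical construction in Theorem \ref{maxconcrit}. This already matches the Kingman-topology description of $\mathcal H$ in Theorem \ref{nearcritthm}, and combined with Step 2 identifies the joint law of split times and topology.

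\textbf{Main obstacle.} Conceptually the proof is a change of variables, so there is no serious analytic obstacle. The delicate point is matching the construction's density to $f_k$: one has to introduce the correct auxiliary variable $\theta = m/(e^{r\mu}-1)$ and correctly fold the $i=0$ factor (coming from the density of $M_k$) into a product running over all of $0,\ldots,k-1$. This bookkeeping is where most of the work lies, and handling the sign of $e^{r\mu}-1$ uniformly for $\mu$ of either sign requires a little care.
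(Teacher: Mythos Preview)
Your proof is correct. There is a harmless slip in Step~1: conditional on $M_k=m$, the density of the remaining $X_j$ on $(0,m)$ is $\frac{1+m}{m(1+x)^2}$, not $\frac{1+m}{(1+x)^2}$; the missing factor of $1/m$ is recovered by the Jacobian when you pass to $Y_j=X_j/m$, so your density for $Y_j$ is right and the remainder of the calculation goes through.

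The paper's own proof is considerably shorter and takes a different route. Rather than recomputing the density from scratch, it observes that the substitution
\[
t_i = \frac{e^{r\mu}-e^{r\mu(1-s_i)}}{e^{r\mu}-1}
\]
converts the near-critical density $f_k$ into the critical one, and that this substitution is precisely the map sending the critical construction's $T_i^{\mathrm{crit}} = 1 - X_i/M_k$ to the near-critical $T_i$ in the statement. Thus Theorem~\ref{maxconnoncrit} follows immediately from Theorem~\ref{maxconcrit}. Your approach is essentially a rerun of the proof of Theorem~\ref{maxconcrit} with the more complicated transformation plugged in directly; it is self-contained and makes the matching with $f_k$ explicit, at the cost of more bookkeeping. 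The paper's approach is slicker but relies on spotting the right change of variables, which is easier once one already knows both densities.
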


\subsection{Heuristic explanation of our results}\label{heuristics}

In this section, we aim to give a quick intuitive probabilistic derivation of Theorem \ref{maxconcrit}. For this we will need to construct a certain very natural probability measure, $\Q^{k,T}$. Whilst $\Q^{k,T}$ will not be precisely defined until Section \ref{CoMsec} (see \eqref{Qmeasure}), and it is fundamental to the entire success of our approach, for now it will be sufficient to know only a few of its basic properties. The probability measure $\Q^{k,T}$ will describe the behaviour of $k$ distinguished \emph{spine} particles along which standard Galton-Watson processes are immigrated. Under $\Q^{k,T}$, these $k$ spines will have the property of looking like a uniform choice without replacement from all those $N_T$ particles alive at time $T$. For this heuristic we will use this measure $\Q^{k,T}$, together with the classical theorems of Kolmogorov \cite{kolmogorov:solution_biological_problem} about the asymptotics of the survival probability, and Yaglom \cite{yaglom:certain_limit_thms} about the distribution of the scaled population size conditioned to survive (see for example \cite[Theorem 12.7]{lyons_peres:probability_on_trees} for a modern treatment of both these results).

Let $E_k$ be any event concerning the tree drawn out by the $k$ uniformly sampled particles (we will only consider these conditionally on $N_T\ge k$ so that they always exist). It will be easy to show, using the definition of our change of measure $\Q^{k,T}$, that
\begin{equation}\label{heuristiceq1}
\P( E_k | N_T \ge k) = \Q^{k,T}\Big[ \frac{\ind_{E_k^\xi}}{N_T (N_T-1)\ldots (N_T-k+1)} \Big] \frac{\P[N_T (N_T-1)\ldots (N_T-k+1)]}{\P(N_T\ge k)}
\end{equation}
where $E_k^\xi$ is the event corresponding to $E_k$, but for the $k$ spines under $\Q^{k,T}$, rather than the $k$ uniformly chosen particles under $\P$.

Now, the second factor above can easily be approximated using Yaglom's theorem: when $T$ is large,
\begin{multline}\label{heuristiceq2}
\frac{\P[N_T (N_T-1)\ldots (N_T-k+1)]}{\P(N_T\ge k)} = \P[N_T (N_T-1)\ldots (N_T-k+1)|N_T\ge k]\\
\sim T^k\P[(N_T/T)^k | N_T>0] \sim T^k \P[\mathcal E^k]
\end{multline}
where $\mathcal E$ is an exponential random variable with parameter $2/\sigma^2$. Therefore, in order to describe the distribution of the tree drawn out by the $k$ uniformly sampled particles under $\P$ when $T$ is large, it suffices to understand the joint distribution of the tree drawn out by the $k$ spines together with $N_T$ under $\Q^{k,T}$ when $T$ is large.

Write $\tau_i = \tilde {\mathcal S}^k_i(T)/T$ for the scaled split times of the $k$ uniformly sampled particles, and $\tau_i^\xi$ for the scaled split times of the $k$ spine (unordered, in the sense that they are a random permutation of the ordered split times). We show (this is Lemma \ref{Qtopology} and the case $\mu=0$ of Proposition \ref{almostdensity}; see also the discussion in Section \ref{QTlargeTsec}) that in the limit as $T\to\infty$, under $\Q^{k,T}$ the times $(\tau^\xi_1,\ldots,\tau^\xi_{k-1})$ are uniform random variables on $[0,1]$, and the topology of the underlying tree has a certain topology, which is equivalent to the topology of Kingman's coalescent restricted to $k$ blocks. Here is a way of constructing such a tree, again in the same spirit as Aldous \cite[Section 4.2]{aldous:coalescence_review}: let $U_0,\ldots,U_{k-1}$ and $V_1,\ldots,V_{k-1}$ be independent uniform random variables on $[0,1]$. Also let $V_0=1$. Within the unit square, for each $0\le i\le k-1$, draw a line from $(U_i,0)$ to $(U_i,V_i)$. These lines represent the branches of our tree. Now, for each $1\le i\le k-1$, draw a horizontal line starting from $(U_i,V_i)$ towards $(U_0,V_i)$ but stopping as soon as it hits another (vertical) line. This is our description of the tree drawn out by the spines under $\Q^{k,T}$ as $T\to\infty$. (Note, as previously, that the particular choice of the $U_i$ is merely a convenient way to give a random permutation of the vertical lines; the scale on the horizontal axis has no meaning in this construction.)

\begin{figure}[h!]
  \centering
   \includegraphics[width=5cm]{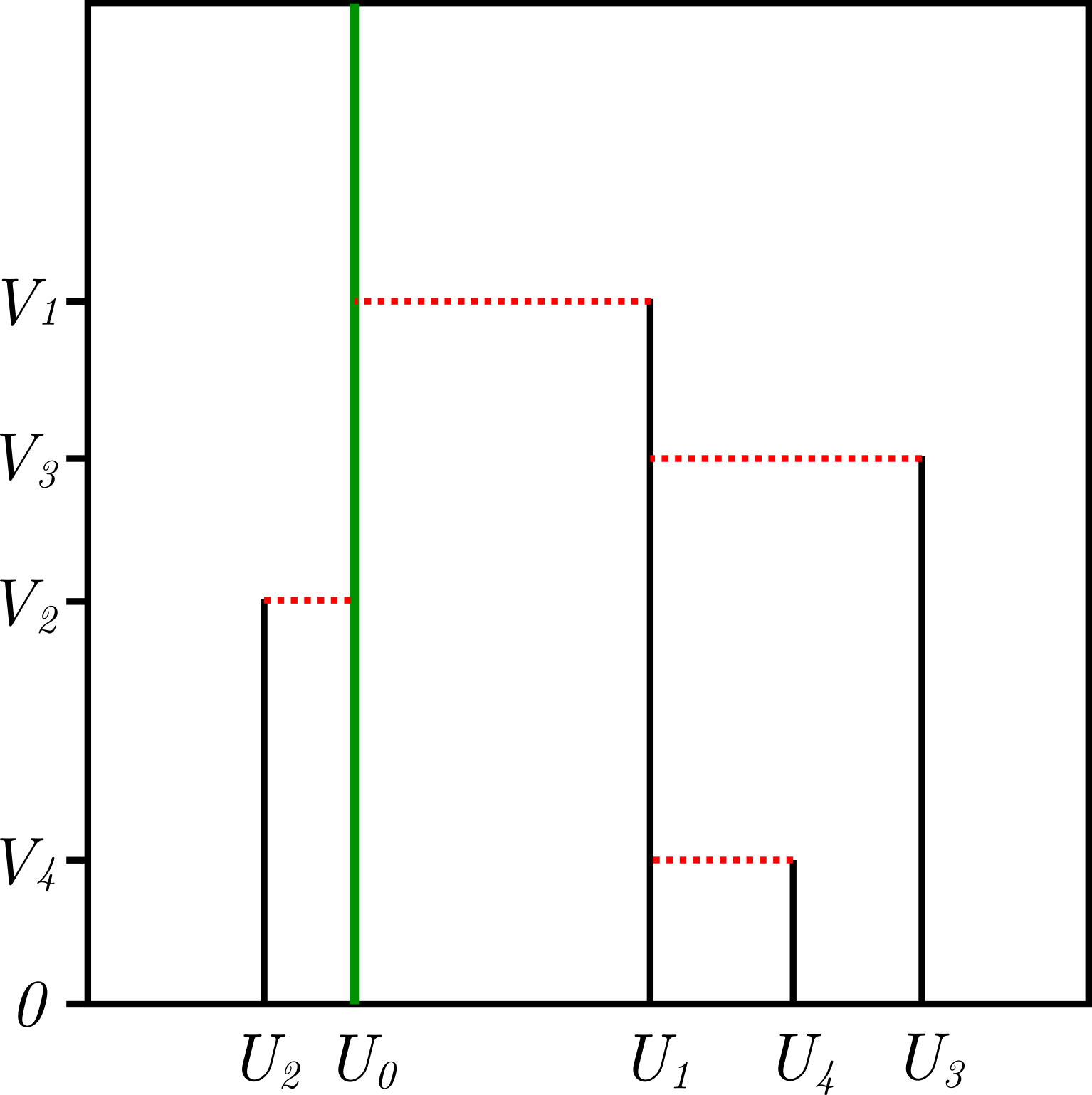}
   \vspace{0mm}
  \caption{\small{A probabilistic representation of the rescaled tree under $\Q^{5,T}$ for large $T$.}}
  \end{figure}

Now we explain how to observe the joint distribution of this tree and the total population size, given the description above. Under $\Q$, each spine---that is, each vertical line in our picture---behaves in the same way, giving birth to ordinary particles at a constant rate (independent of the number of marks following the spine); this can be seen from Lemma \ref{Qbots}. Thus the contribution to the total population of a vertical line of length $v$ in our picture is simply the contribution to the total population of a single spine that lived for time $vT$. It is immediate from the definition of $\Q^{1,vT}$ that a single spine results in a size-biasing of the total population size; by Yaglom's theorem, under $\P$, the total population size after time $vT$ is approximately $vT$ times an independent exponential random variable of parameter $2/\sigma^2$, and therefore under $\Q^{1,vT}$ the total population size is approximately $vT$ times an independent Gamma random variable of parameters $(2,2/\sigma^2)$.

Thus, the total population size $N_T$ under $\Q^{k,T}$ satisfies
\[\frac{N_T}{T} \to^{(d)} \sum_{i=0}^{k-1} V_i \Gamma_i\]
where the branch lengths $V_{1},\dots,V_{k-1}$ are independent $U[0,1]$ random variables, $V_0=1$, and $\Gamma_0,\dots,\Gamma_{k-1}$ are independent identically distributed $\Gamma(2,2/\sigma^2)$ random variables that are also independent of $V_0,\ldots,V_{k-1}$.

\begin{rmk}
Before we apply the description above to obtain an explanation of our results, let us make a further observation. Recall that for each $1\le i \le k-1$, $V_i$ is uniformly distributed on $[0,1]$. A uniform random variable multiplied by an independent $\Gamma(2,2/\sigma^2)$ random variable is exponentially distributed with parameter $2/\sigma^2$; that is, $\mathcal E_i:=V_i\Gamma_i\sim\mathrm{Exp}(2/\sigma^2)$ for $i=1,\dots,k-1$.
Finally, $V_0=1$, and therefore of course $V_0 \Gamma_0$ is distributed as the sum of two independent exponential random variables, say $\mathcal E_0$ and $\mathcal E_0^\prime$, each with parameter $2/\sigma^2$. Thus the total population size under $\Q^{k,T}$ is approximately $T$ times a sum of $k+1$ independent exponential random variables of parameter $2/\sigma^2$, or in other words, $T$ times a $\Gamma(k+1,2/\sigma^2)$ random variable. 
\end{rmk}

\begin{figure}[h!]
  \centering
      \includegraphics[width=5cm]{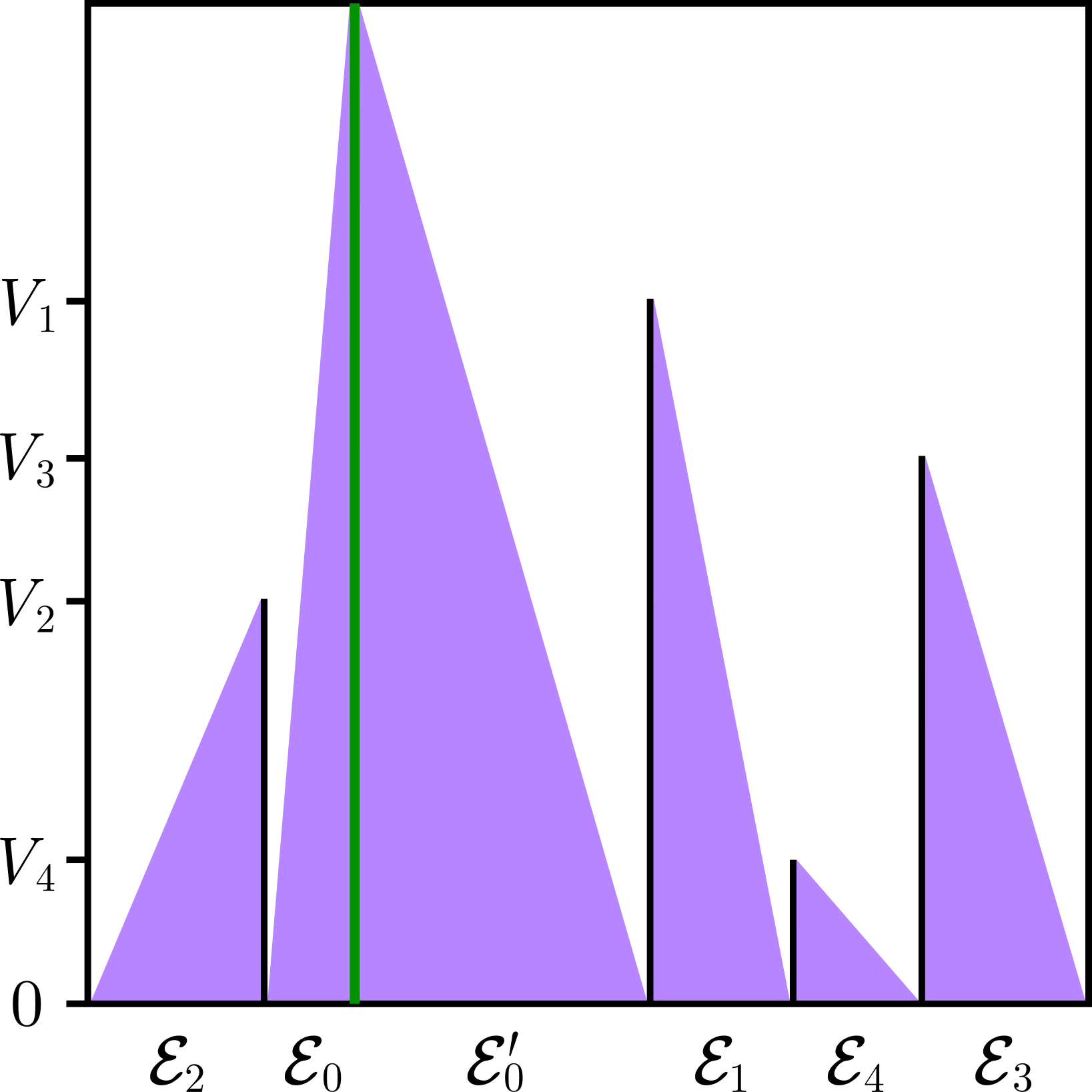}
   \vspace{0mm}
  \caption{\small{Each triangle represents the contribution towards the total population from particles that branched off the adjacent spine. The scale on the horizontal axis can now be interpreted as population size.}}
  \label{Qpopconst}
  \end{figure}

\begin{rmk}
It is also worth noting that size-biased exponential distributions give Gamma distributions. In fact, the exponential distribution can be characterised by relationships with its size-biased versions and uniform random variables; this was key in Ren, Song and Sun's proof of Yaglom's theorem using two spines in \cite{ren_song_sun:2spineGW}, and also appeared with a single spine in Lyons, Pemantle and Peres \cite{lyons_et_al:conceptual_llogl_mean_behaviour_bps}.
\end{rmk}

To complete the explanation of our results, continuing from \eqref{heuristiceq1} and \eqref{heuristiceq2}, we  now see that 
\begin{align*}
\P(\tau_1\in dt_1, \ldots, \tau_{k-1}\in dt_{k-1} | N_T \ge k) &\sim \Q^{k,T}\bigg[ \frac{\ind_{\{\tau_1^{\xi} \in dt_1, \ldots, \tau_{k-1}^\xi\in dt_{k-1}\}}}{N_T (N_T-1)\ldots (N_T-k+1)} \bigg] T^k \P[\mathcal E^k]\\
&\sim \P\bigg[ \frac{\ind_{\{1-V_1\in dt_1, \ldots, 1-V_{k-1}\in dt_{k-1}\}}}{T^k (\sum_{i=0}^{k-1} V_i \Gamma_i)^k} \bigg] T^k \P[\mathcal E^k]\\
&= \P\bigg[ \frac{1}{(\sum_{i=0}^{k-1} (1-t_i) \Gamma_i)^k}\bigg] \P[\mathcal E^k] \,dt_1\,\ldots\,dt_{k-1}.
\end{align*}
We now observe that for any $\alpha>0$,
\[\frac{1}{\alpha^k} = \frac{1}{(k-1)!} \int_0^\infty z^{k-1} e^{-\alpha z} dz.\]
Applying this fact, we get that 
\begin{align*}
\P(\tau_1\in dt_1, \ldots, \tau_{k-1}\in &\, dt_{k-1} \,|\, N_T \ge k)\\
&\sim \P\Big[\frac{1}{(k-1)!} \int_0^\infty z^{k-1}e^{-z \sum_{i=0}^{k-1} (1-t_i)\Gamma_i} dz \Big]\P[\mathcal E^k] \,dt_1\,\ldots\,dt_{k-1}\\
&= \frac{1}{(k-1)!} \int_0^\infty z^{k-1}\prod_{i=0}^{k-1} \frac{1}{(1+\frac{\sigma^2}{2}(1-t_i)z)^2} dz\,k!\Big(\frac{\sigma^2}{2}\Big)^k \,dt_1\,\ldots\,dt_{k-1}\\
&= k \int_0^\infty z^{k-1}\prod_{i=0}^{k-1} \frac{1}{(1+(1-t_i)z)^2} dz\,dt_1\,\ldots\,dt_{k-1}.
\end{align*}
Indeed, this is the joint density of the coalescent times in the critical case as given in Section \ref{limitconstructionsec}, and consistent with the construction in Theorem \ref{maxconcrit}. Further, integrating gives the joint distribution function in Theorem \ref{critBDthm}.

Note that in near-critical cases a similar picture will hold, although the distribution of the rescaled spine split times will not be uniform and will have a density that is proportional to $e^{r\mu(1-s)}$ for $s\in[0,1]$. See Section \ref{nearcritsec} for more details.

\section{Further discussion of the results}\label{chatsec}

In this section we seek to give further understanding of our scaling limit, compare it to known results, and to explore other ways of obtaining similar representations; in order to keep the calculations to a reasonable length, at times we will not worry too much about the technical details. We will return to full rigour in Sections \ref{CoMsec}, \ref{BDsec} and \ref{nearcritsec}, in order to prove our main results.

\subsection{Comparison to known formulas}\label{comparisonsec}

As mentioned in the introduction, the critical case $\mu=0$ has been investigated by other authors. Athreya \cite{athreya:coalescence} gave an implicit description of the distributional limit of $\S(T)/T$. (In fact he worked with discrete-time Galton-Watson processes, but this makes no difference in the limit, and we will continue to use our continuous-time terminology and notation for ease of comparison.) By considering the numbers of descendants at time $T$ of particles alive at an earlier time $sT$, Athreya showed that
\[\lim_{T\to\infty} \P(\S(T)/T < s \,|\, N_T\ge 2) = 1 - E[\phi(G_s)]\]
where $G_s$ satisfies $P(G_s = j) = (1-s)s^{j-1}$ for $j\ge 1$, and
\[\phi(j) = E\bigg[\frac{\sum_{i=1}^j \eta_i^2}{(\sum_{i=1}^j \eta_i)^2}\bigg]\]
where $\eta_1,\eta_2,\ldots$ are independent exponential random variables of parameter $1$.

We check that this description of the scaling limit agrees with our own formula \eqref{critk2}.

\begin{lem}
With $\phi$ and $G_s$ as described above,
\[E[\phi(G_s)] = \frac{2(s-1)}{s^2}\big(\log(1-s)+s\big).\]
\end{lem}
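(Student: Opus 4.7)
The plan is to first reduce the inner expectation $\phi(j)$ to a simple closed form, and then sum against the geometric law of $G_s$ using a standard logarithm series.

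For the first step, I would exploit the fact that if $\eta_1,\dots,\eta_j$ are i.i.d.\ $\mathrm{Exp}(1)$ and $S_j = \eta_1+\dots+\eta_j$, then the normalized vector $(\eta_1/S_j,\dots,\eta_j/S_j)$ is Dirichlet$(1,\dots,1)$ (uniform on the simplex) and independent of $S_j$. By exchangeability,
\[\phi(j) = j\,E\!\left[\frac{\eta_1^2}{S_j^2}\right] = j\,E[X^2],\]
where $X := \eta_1/S_j \sim \mathrm{Beta}(1,j-1)$ for $j\ge 2$ (and $X\equiv 1$ for $j=1$). Using the mean $1/j$ and variance $(j-1)/(j^2(j+1))$ of $\mathrm{Beta}(1,j-1)$, one gets $E[X^2] = 2/(j(j+1))$ (which also correctly returns $1$ at $j=1$), so
\[\phi(j) = \frac{2}{j+1}.\]

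For the second step, I would plug this into the geometric sum:
\[E[\phi(G_s)] = \sum_{j=1}^\infty \frac{2}{j+1}(1-s)s^{j-1} = \frac{2(1-s)}{s^2}\sum_{k=2}^\infty \frac{s^k}{k}.\]
Since $\sum_{k\ge 2} s^k/k = -\log(1-s) - s$, this immediately gives
\[E[\phi(G_s)] = \frac{2(1-s)}{s^2}\bigl(-\log(1-s)-s\bigr) = \frac{2(s-1)}{s^2}\bigl(\log(1-s)+s\bigr),\]
as required.

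The only mildly subtle point is the identification $\phi(j) = 2/(j+1)$; everything else is a one-line power series identity. No genuine obstacle arises, since the Dirichlet representation of normalized i.i.d.\ exponentials is a standard fact; the computation of $E[X^2]$ for a Beta variable could alternatively be done directly via the Beta integral $\int_0^1 x^2(j-1)(1-x)^{j-2}\,dx = 2/(j(j+1))$ if one prefers to avoid citing the variance formula.
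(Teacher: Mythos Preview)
Your proof is correct. The only difference from the paper lies in how the identity $\phi(j)=2/(j+1)$ is established: the paper gives a combinatorial/probabilistic argument, interpreting $\sum_i \eta_i^2/(\sum_i \eta_i)^2$ as the probability that two auxiliary uniform points on $(0,\gamma_j)$ fall in the same spacing, and then using that conditioning on $\gamma_j$ makes all $j+1$ points i.i.d.\ uniform. You instead go straight through the Dirichlet representation of normalised exponentials and compute the second moment of a $\mathrm{Beta}(1,j-1)$ marginal. Both arguments are standard and short; yours is more direct and algebraic, while the paper's is a slightly slicker ``balls-in-gaps'' argument that avoids any moment calculation. The summation step is identical in both.
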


\begin{proof}
Suppose first that we are given $\eta_1,\ldots,\eta_j$. Let $\gamma_j = \sum_{i=1}^j \eta_i$, and let $U_1$ and $U_2$ be independent uniform random variables on $(0,\gamma_j)$. Then for each $l$, $(\eta_l/\sum_{i=1}^j \eta_i)^2$ is the probability that both $U_1$ and $U_2$ fall within the interval $(\gamma_{l-1},\gamma_l)$. Therefore  $(\sum_{i=1}^j \eta_i^2)/(\sum_{i=1}^j \eta_i)^2$ is the probability that for some $l\le j$, both $U_1$ and $U_2$ fall within the interval $(\gamma_{l-1},\gamma_l)$.

Suppose now that we are given only the value of $\gamma_j$, and let $\tilde\gamma_1,\ldots,\tilde\gamma_{j-1}$ be a uniform permutation of $\gamma_1,\ldots,\gamma_{j-1}$. Since $\gamma_1,\gamma_2,\ldots$ can be viewed as the arrival times of a Poisson process of parameter $1$, we know that given $\gamma_j$, the random variables $\tilde\gamma_1,\ldots,\tilde\gamma_{j-1},U_1,U_2$ are independent uniform random variables on $(0,\gamma_j)$. Therefore the probability that $U_1$ and $U_2$ both fall within the interval $(\tilde\gamma_{l-1},\tilde\gamma_l)$ for some $l$ is exactly $2/(j+1)$. Since this does not depend on the value of $\gamma_j$, we get immediately that $\phi(j) = 2/(j+1)$.

Summing over the possible values of $G_s$, we get
\begin{multline*}
E[\phi(G_s)] = \sum_{j=1}^\infty \frac{2}{j+1}(1-s)s^{j-1} = 2\frac{(1-s)}{s^2}\sum_{j=1}^\infty \frac{s^{j+1}}{j+1} = 2\frac{(1-s)}{s^2} \int_0^s \frac{u}{1-u}\,\d u\\
= 2\frac{(1-s)}{s^2}\Big(\log\Big(\frac{1}{1-s}\Big)-s\Big) = 2\frac{(s-1)}{s^2}\big(\log(1-s)+s\big).\qedhere
\end{multline*}
\end{proof}

Durrett \cite{durrett:genealogy} also gave a description of the limit $\S(T)/T$ in the critical case, showing that
\[\lim_{T\to\infty} \P(\S(T)/T > s \,|\, N_T\ge 2) = (1-s)\bigg(1+2\sum_{j=1}^\infty \frac{s^j}{j+2}\bigg).\]
It is easy to expand our formula \eqref{critk2} as a power series and check that it agrees with the above. Durrett, in fact, went on to give power series expressions for the distributions of $\S^3_1$ and $\S^3_2$. He further stated that it was ``theoretically'' possible to calculate distributions of split times for $k>3$, and also mentioned that he could derive a joint distribution for $\S^3_1$ and $\S^3_2$, again in power series form, but that ``we would probably not obtain a useful formula''. This makes clear the advantage of our method, which gives explicit formulas for the joint distribution for each $k$ without going through an interative procedure.

O'Connell \cite{oconnell:genealogy_mrca} gave exactly the formula in our Example \ref{nearcritk2}, the near-critical scaling limit in the case $k=2$. He also provided a very interesting application to a biologically motivated problem: how long ago did the most recent common ancestor of all humans live?

In subcritical and supercritical cases, it is impossible to give such explicit results in generality as the genealogical structure of the tree depends on the detail of the offspring distribution. However one can characterize the distribution of the split times using integral formulas involving the generating function of the offspring distribution. Lambert \cite{lambert:coalescence_GW} (in discrete time) and Le \cite{le:coalescence_GW} (in continuous time) did this in the case $k=2$ for quite general Galton-Watson processes. They also investigated the case $k\ge 3$, but gave only an implicit representation for the joint distribution of the split times. More recently Grosjean and Huillet \cite{grosjean_huillet:genealogy_coalescence} and Johnston \cite{johnston:coalescence_subcrit_supercrit} gave detailed answers for general $k$.

Donnelly and Kurtz \cite[Theorem 5.1]{donnelly_kurtz:particle_measure_popn_models} showed that the genealogy of the Feller diffusion is a time-change of Kingman's coalescent, in which the rate at which two lineages merge is inversely proportional to the population size. The Feller diffusion started from $x$ is itself the scaling limit of a critical Galton-Watson process started with a population of size $\lfloor Nx\rfloor$, so taking a limit as $x\downarrow 0$ one might expect to be able to recover our results. However, finding the marginal distribution of the coalescent times---that is, \emph{not} conditional on the population size---is highly non-trivial, as the two quantities are so closely connected; this can be seen in \eqref{heuristiceq1}, for example. We manage to overcome this serious difficulty by decoupling the dependence between the population size and the split times via the measure $\Q^{k,T}$, which adjusts for the varying population size whilst simultaneously ensuring the $k$ spines form a uniform sample without replacement from population at time $T$.

Besides being more difficult, the question of understanding the distribution of the coalescent tree drawn out by a sample from a large population, without knowing the population size, appears to be more natural from the point of view of biological applications.

Indeed, whilst the formulae for the genealogies in near-critical Galton-Watson processes look complicated, they are nevertheless explicit, they have simple constructions, and the underlying natural branching model allows the population to vary randomly with time. In this latter respect, the structure obtained is significantly different from under fixed sized population assumptions. It is hoped that our results may eventually prove useful in applications, for example using computational methods to fit these genealogical models to real data.

\subsection{Contour processes and the continuum random tree}\label{crt_sec}

Athreya \cite{athreya:coalescence} mentioned that his result could alternatively be obtained by representing the limiting random trees with Brownian excursions. We give a non-rigorous discussion of this approach.

It is known that a critical Galton-Watson tree conditioned to survive until time $T$ converges, as $T\to\infty$ (in a suitable topology), to a \emph{continuum random tree}. There is a vast literature, beginning with Aldous \cite{aldous:CRTI}, on continuum random trees as the scaling limit of various discrete structures. For our rough discussion we can think of drawing our tree, conditioned to survive to time $T$ and renormalised by $T$, and tracing a contour around it starting from the root and proceeding in a depth-first manner from left to right. The height of that contour process converges as $T\to\infty$ to a Brownian excursion $(B_t)_{t\in[0,\nu]}$ conditioned to reach height $1$. It is easy to see that two points $u,v\in[0,\nu]$ correspond to the same ``vertex'' in the limiting tree if they are at the same height and the excursion between $u$ and $v$ is always above $B_u$; i.e. $B_u=B_v=\inf_{t\in(u,v)} B_t$. The total population of the tree at time $sT$ corresponds to the local time of the Brownian excursion at level $s$. Choosing two particles at time $T$, then, means picking two points on the excursion at height $1$ according to the local time measure; and the two particles have a common ancestor at time $t$ if the two points chosen are in the same sub-excursion above height $t$.

   \begin{figure}[h!]
   \centering
    \includegraphics[width=11cm]{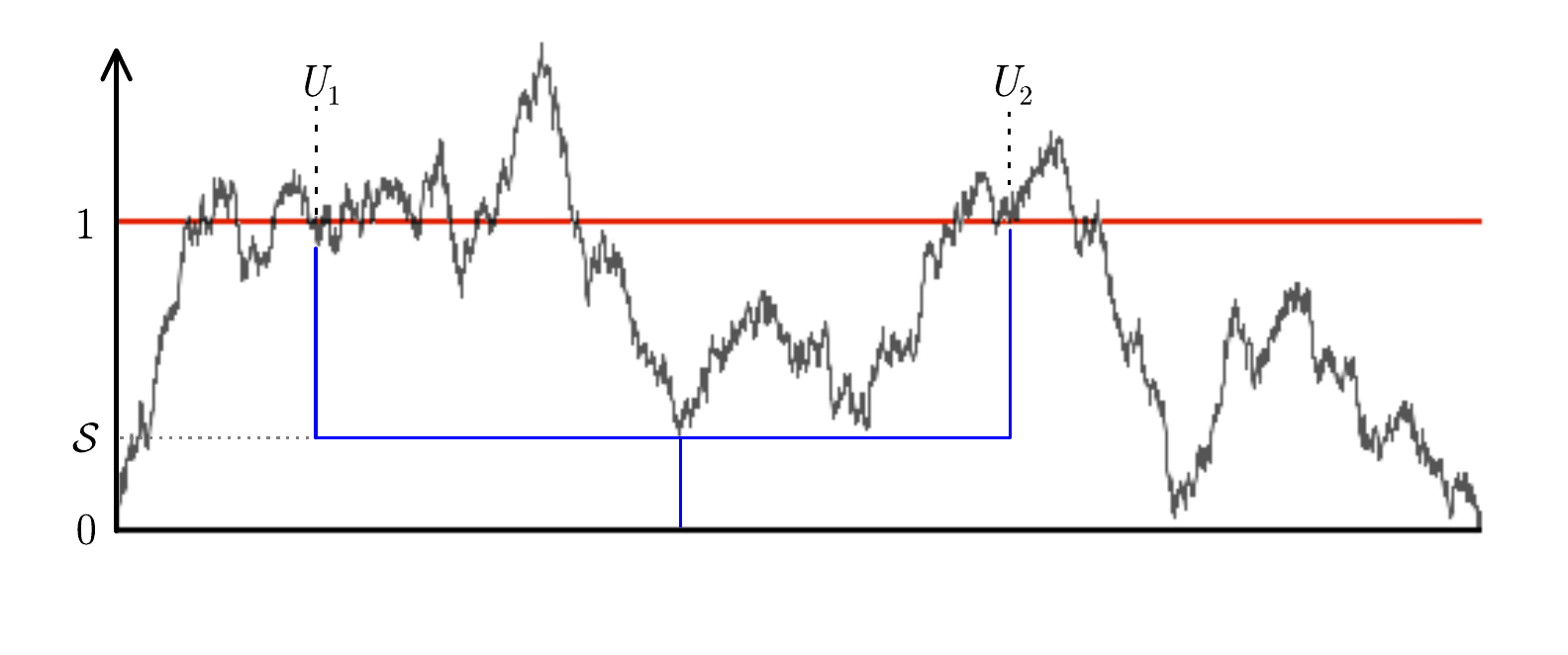}
    \vspace{-8mm}
   \caption{\small{A Brownian excursion conditioned to reach height $1$. Two points $U_1$ and $U_2$ are chosen uniformly according to local time at height $1$, and the induced tree is drawn below the excursion. The split time of the two particles is denoted by $\S$.}}
   \end{figure}

In order to calculate the probability of this last event, we (obviously) need to know a little about Brownian excursions. Excursions, indexed by local time, occur according to a Poisson point process with intensity Lebesgue $\times\,\, n$ for some excursion measure $n$. This measure $n$ satisfies $n(\sup_t f(t)>a ) = \frac{1}{2a}$; and the local time at $0$ when the Brownian motion first hits $-\delta$ is exponentially distributed with parameter $\frac{1}{2\delta}$. See for example \cite{rogers:guided_tour_excursions}.

Take a Brownian excursion conditioned to reach height $1$, and choose two points $U_1$ and $U_2$ at height $1$ uniformly according to local time measure. Let $L_1$ be the total local time at level $1$, and $L_U$ be the total local time between $U_1$ and $U_2$. The event that $U_1$ and $U_2$ are in the same sub-excursion above height $s$ is exactly the event that there is no excursion from level $1$ between $U_1$ and $U_2$ that goes below level $s$ (and stays above level $0$); by the facts about Brownian excursions above, given $L_U$, the number of such excursions is a Poisson random variable with parameter $L_U(\frac{1}{2(1-s)} - \frac{1}{2})$. Thus the probability that $U_1$ and $U_2$ are in the same sub-excursion above height $s$ is
\[\int_0^\infty \P(L_1 \in \d x) \int_0^x \P( L_U \in \d y \,|\, L_1 = x) e^{-y(\frac{1}{2(1-s)}-\frac{1}{2})}.\]
The local time $L_1$ is exponential of parameter $1/2$, and it is easy to check that the density of the distance between two uniform random variables on $(0,x)$ is $2(x-y)/x^2$. Thus the above equals
\[\int_0^\infty \frac{1}{2} e^{-x/2} \int_0^x \frac{2(x-y)}{x^2} e^{-y(\frac{1}{2(1-s)}-\frac{1}{2})}\, \d y\, \d x.\]
Making the substitution $z=y/x$ and changing the order of integration, we get
\[\int_0^1 (1-z) \int_0^\infty e^{-\frac12 x (1+\frac{z}{1-s}-z)}\,\d x\,\d z,\]
and it is then easy to integrate directly to get that the limiting split time $\S$ satisfies
\[\P(\S\ge s) = 2\Big(\frac{s-1}{s^2}\Big)(\log(1-s)+s)\]
which agrees with \eqref{critk2}.

Applying this sophisticated machinery works well (at least if we do not worry too much about the technical details) in this simple case. However it becomes much more difficult to generalise these techniques to obtain the joint distribution of the split times for three particles, rather than just two; let alone the general formula for $k$ particles that appeared in Theorem \ref{nearcritthm}.

Popovic \cite{popovic:asymptotic_genealogy_critical_bp} used the following observation. Condition on the event that there are exactly $k$ particles alive at time $T_k$, so that the $k$ particles we choose comprise the whole population, then rescale by $T_k$ and let $k\to\infty$. If $T_k/k\to t$, then the contour process converges to a Brownian excursion conditioned to have local time $1$ at level $t$; and the split times are then governed by the entire collection of excursions below level $t$. These excursions form a Poisson point process with an explicit intensity measure. This allowed Popovic to give some very interesting results about critical processes, and similar techniques were built upon in various ways by her and other authors \cite{aldous_popovic:critical_bp_biodiversity, gernhard:conditioned_reconstructed, lambert:contour_splitting_trees, lambert:coalescent_branching_trees}. Although these are certainly related to our investigation, they often look at the entire population alive at time $T$, rather than sampling a fixed number of individuals, which results in a different scaling regime. Biological motivation for why we might like to sample a fixed number of individuals from a growing population---that is, our regime---can be found in \cite{oconnell:genealogy_mrca}.

After this article was released, Lambert \cite{lambert:genealogy_binary} constructed a remarkable method for obtaining some of our formulas from contour processes. Given a branching process whose population at time $T$ is geometrically distributed (for example a birth-death process), the work in \cite{lambert_stadler:birth_death_cpp} allows one to sample each particle at time $T$ independently with some fixed probability $y\in(0,1)$ and reconstruct the genealogical tree of the sampled particles. By taking $y$ to be a realisation of a carefully chosen improper random variable $Y$, and conditioning the resulting number of particles sampled to be exactly $k$, in \cite{lambert:genealogy_binary} Lambert  produces our Proposition \ref{BDdensity}. However, constructing the correct (improper) distribution for $Y$ would have been extremely difficult without prior knowledge of the answers provided by our results.

Lambert's results in \cite{lambert:genealogy_binary} are for a large class of processes known as \emph{coalescent point processes}. However, coalescent point processes necessarily have geometrically distributed population sizes. As Lambert says in \cite{lambert:genealogy_binary}, ``we consider here possibly non-Markovian and time-inhomogeneous branching processes, but always binary." For Galton-Watson processes, this means only our birth-death process results are in common with Lambert's coalescent point process results in \cite{lambert:genealogy_binary}. In a more recent private communication, Lambert has told us that he can carry out his construction even in non-binary cases, and that his results hold beyond geometrically distributed population sizes.

Another advantage of our approach is that it does not require a Markovian contour process, and has the potential to be generalised, for example, to Galton-Watson processes with infinite variance, or spatial branching processes. We plan to carry out some of these generalisations in future work.

\subsection{Purple trees}

For a moment forget about the scaling limit, and consider a birth-death process (that is, fix $\alpha\ge 0$ and $\beta>0$, and suppose that $r = \alpha+\beta$, $p_0 = \alpha/(\alpha+\beta)$ and $p_2 = \beta/(\alpha+\beta)$, with $p_j=0$ for $j\neq 0,2$). Wait until time $T$, and then colour any particle that has a descendant alive at time $T$ purple, and any particle whose descendants all die before time $T$ red.

The purple tree, often called the \emph{reduced tree} in the literature, was first introduced by Fleischmann and Siegmund-Schultze \cite{fleischmann_ss:reduced_GW}. The reduced tree has been used in several of the references given in Section \ref{comparisonsec}, in particular O'Connell \cite{oconnell:genealogy_mrca}. On a related note, Harris, Hesse and Kyprianou \cite{harris_hesse_kyprianou:bbm_strip} considered a supercritical branching process and coloured any particle whose descendants survived forever blue, and anyone whose descendants all died out red. Of course red particles in our picture are also red in the Harris-Hesse-Kyprianou picture, whereas each of our purple particles may be either red or blue in their colouring.

Now suppose that, rather than running the birth-death process until time $T$ and then colouring all the particles, we want to construct the coloured picture dynamically as the process evolves. If we start with one particle and condition on the process surviving until time $T$, then the first particle is certainly purple, since at least one of its descendants must survive.

Let $p_t = \P(N_t=0)$. Using generating functions one can show that
\[p_t = \frac{\alpha e^{(\beta-\alpha)t}-\alpha}{\beta e^{(\beta-\alpha)t}-\alpha}, \hspace{8mm} 1-p_t = \frac{(\beta-\alpha)e^{(\beta-\alpha)t}}{\beta e^{(\beta-\alpha)t}-\alpha};\]
see Section \ref{BDgenfuncts} for details.

If a purple particle branches at time $s$, then its two children could be either both purple, or one red and one purple. The probability that they are both purple must be 
\[\frac{(1-p_{T-s})^2}{1-p_{T-s}^2},\]
corresponding to the probability that \emph{both} descendancies survive given that at least one does. The probability that one is purple and one is red must similarly be
\[\frac{2p_{T-s}(1-p_{T-s})}{1-p_{T-s}^2}.\]
One can check from \cite{harris_hesse_kyprianou:bbm_strip} that purple particles branch at rate $\beta(1+p_{T-s})$ at time $s$, and red particles branch at rate $\beta p_{T-s}$ at time $s$. In particular purple particles give birth to new purple particles at rate
\[\beta(1+p_{T-s})\cdot \frac{(1-p_{T-s})^2}{1-p_{T-s}^2} = \beta(1-p_{T-s}).\]
Similar calculations can be done generally, rather than just for birth-death processes. However it is easy to see that in any near-critical cases the probability that a purple particle has more than two purple children at any branching event will tend to zero, so in a sense the important information is captured by the simpler birth-death calculations. Indeed we saw in Theorem \ref{nearcritthm} that in our scaling limit, only the mean of the branching process really matters; and we will see again in Lemma \ref{splitsdistinct} that only binary splits appear in the limit. For this intuitive discussion we therefore carry out our calculations only in the birth-death case only. 

Of course, to understand the coalescent structure of the tree drawn out by particles chosen at time $T$, we can ignore the red particles; only the purple tree matters. Let us now return to a near-critical scaling limit by assuming that $\beta = \alpha + \gamma/T$ for some $\gamma\neq 0$. At time $sT$, the purple tree branches at rate
\[\beta(1-p_{T-sT}) = \frac{\beta\gamma e^{\gamma (1-s)}/T}{\beta e^{\gamma(1-s)} - (\beta - \gamma/T)} = \frac{\gamma e^{\gamma(1-s)}}{T(e^{\gamma(1-s)}-1)} \cdot \Big(1-\frac{\gamma}{\beta T(e^{\gamma(1-s)}-1+\frac{\gamma}{\beta T})}\Big).\]
Scaling time $[0,T]$ onto $[0,1]$ and considering the large $T$ limit, at time $s\in(0,1)$ the purple tree undergoes binary branching at rate
\begin{equation}
\lim_{T\rightarrow\infty} T\,\beta(1-p_{T(1-s)})=\frac{\gamma e^{\gamma(1-s)}}{e^{\gamma(1-s)}-1}.
\label{purplerate}
\end{equation}
Thus, since
\[\int_0^t \frac{\gamma e^{\gamma(1-s)}}{e^{\gamma(1-s)}-1}\, \d s = \int_{e^{\gamma(1-t)}}^{e^{\gamma}} \frac{1}{u-1}\,\d u = \log\Big(\frac{e^\gamma-1}{e^{\gamma(1-t)}-1}\Big),\]
we see that the purple tree in the near-critical scaling limit is the same as a Yule tree (binary branching at rate $1$) observed under the time change
\[t \mapsto \log\Big(\frac{e^\gamma-1}{e^{\gamma(1-t)}-1}\Big).\]

Following the same route in the purely critical case $\alpha=\beta$ gives that the rescaled purple tree binary branches at rate $1/(1-s)$, which corresponds to a Yule tree under the time change $t\mapsto -\log(1-t)$.

These rough calculations help to explain the similarities between our formulas in the near-critical scaling limit (Theorem \ref{nearcritthm}) and in the birth-death process (Theorem \ref{noncritBDthm}). 
In particular, for the coalescence behaviour, only the purple tree matters. In the large time $T$ limit, only binary branching occurs in the purple tree, since the chance of any purple particle having more than one other purple offspring at a time (or in close proximity) becomes negligible. Further, the purple branching rate is given by the limit of the original branching rate weighted by the probability of survival, that is $\lim_{T\rightarrow\infty} T \beta(1-p_{T(1-s)})$, as calculated above, and this rate corresponds to a simple deterministic time change of a Yule tree in all near-critical cases.

An anonymous referee pointed out to us that Theorem 2.2 of \cite{oconnell:genealogy_mrca} gives an apparently incorrect formula in place of our \eqref{purplerate}, although the main Theorem 2.3 of \cite{oconnell:genealogy_mrca} is nevertheless correct.

\section{Spines and changes of measure}\label{CoMsec}

In this section we lay down many of the technical tools that we will need to prove the results in the previous sections. Our two most important signposts will be Proposition \ref{firstprop}, which translates questions about uniformly chosen particles under $\P$ into calculations under a new measure $\Q$; and Proposition \ref{Qmgf_gen_prop}, which is a version of Campbell's formula under $\Q$ which will be central to our analysis.

First, of course, we must introduce $\Q$, and we begin by describing the idea of \emph{spines}, which introduce extra information into our tree by allocating \emph{marks} to certain special particles. Spine methods are now well known and a thorough treatment can be found for example in \cite{harris_roberts:many_to_few}. We give only a brief introduction.

\subsection{The $k$-spine measure $\P^k$}\label{pk_description}

We define a new measure $\P^k$ under which there are $k$ distinguished lines of descent, which we call spines. Briefly, $\P^k$ is simply an extension of $\P$ in that all particles behave as in the original branching process; the only difference is that some particles carry marks showing that they are part of a spine.

Under $\P^k$ particles behave as follows:
\begin{itemize}
\item We begin with one particle which carries $k$ marks $1,2,\ldots,k$.
\item We think of each of the marks $1,\ldots,k$ as distinguishing a particular line of descent or ``spine'', and define $\xi^i_t$ to be the label of whichever particle carries mark $i$ at time $t$.
\item A particle carrying $j$ marks $b_1 < b_2 < \ldots < b_j$ at time $t$ branches at rate $r$, dying and being replaced by a random number of particles according to the law of $L$, independently of the rest of the system, just as under $\P$.
\item Given that $a$ particles $v_1,\ldots,v_a$ are born at a branching event as above, the $j$ marks each choose a particle to follow independently and uniformly at random from amongst the $a$ available. Thus for each $1\leq l\leq a$ and $1\leq i \leq j$ the probability that $v_l$ carries mark $b_i$ just after the branching event is $1/a$, independently of all other marks.
\item If a particle carrying $j>0$ marks $b_1 < b_2 < \ldots < b_j$ dies and is replaced by 0 particles, then its marks are transferred to the graveyard $\Delta$.
\end{itemize}

Again we emphasise that under $\P^k$, the system behaves exactly as under $\P$ except that some particles carry extra marks showing the lines of descent of $k$ spines. We write $\xi_t = (\xi^1_t,\ldots,\xi^k_t)$. Obviously $\xi_t$ depends on $k$ too, but we omit this from the notation.

We let $n_t$ be the number of distinct spines (i.e.~the number of particles carrying marks) at time $t$, and for $i\ge 1$
\[\psi_i = \inf\{t\ge 0 : n_t\not\in \{1,\ldots, i\}\}\]
with $\psi_0=0$. We view $\psi_i$ as the $i$th spine split time (although, for example, the first and second spine split times may be equal---corresponding to marks following three different particles at the first branching event). We also let $\rho^i_t$ be the number of marks following spine $i$.

The set of distinct spine particles at any time $t$, and the marks that are following those spine particles, induce a partition $\mathcal Z^k_t$ of $\{1,\ldots,k\}$. That is, $i$ and $j$ are in the same block of $\mathcal Z^k_t$ if $\xi^i_t = \xi^j_t$. If we then let
\[Z^k_i = \mathcal Z^k_{\psi_i}\]
for $i=0,\ldots,k-1$, we have created a discrete collection of partitions $Z_0,Z_1,\ldots,Z_{k-1}$ which describe the topological information about the spines without the information about the spine split times. It will occasionally be useful to use the $\sigma$-algebra $\mathcal H' = \sigma(Z_0,Z_1,\ldots)$.

For any particle $u\in\Nc_t$, there exists a last time at which $u$ was a spine (which may be $t$). If this time equals $\psi_i$ for some $i$, then we say that $u$ is a \emph{residue} particle; if it does not equal $\psi_i$ for any $i$, and $u$ is not a spine, then we say that $u$ is \emph{ordinary}. Each particle is exactly one of residue, ordinary, or a spine.

Of course $\P^k$ is not defined on the same $\sigma$-algebra as $\P$. We let $\F^k_t$ be the filtration containing all information about the system, including the $k$ spines, up to time $t$; then $\P^k$ is defined on $\F^k_\infty$. For more details see \cite[Section 5]{harris_roberts:many_to_few}. Let $\F^0_t$ be the filtration containing only the information about the Galton-Watson tree. Let $\Gt^k_t$ be the filtration containing all the information about the $k$ spines (including the birth events along the $k$ spines) up to time $t$, but none of the information about the rest of the tree. Finally let $\G^k_t$ be the filtration containing information only about spine splitting events (including which marks follow which spines); $\G^k_t$ does not know when births of ordinary particles from the spines occur.

  \begin{figure}[h!]
  \centering
   \includegraphics[width=11cm]{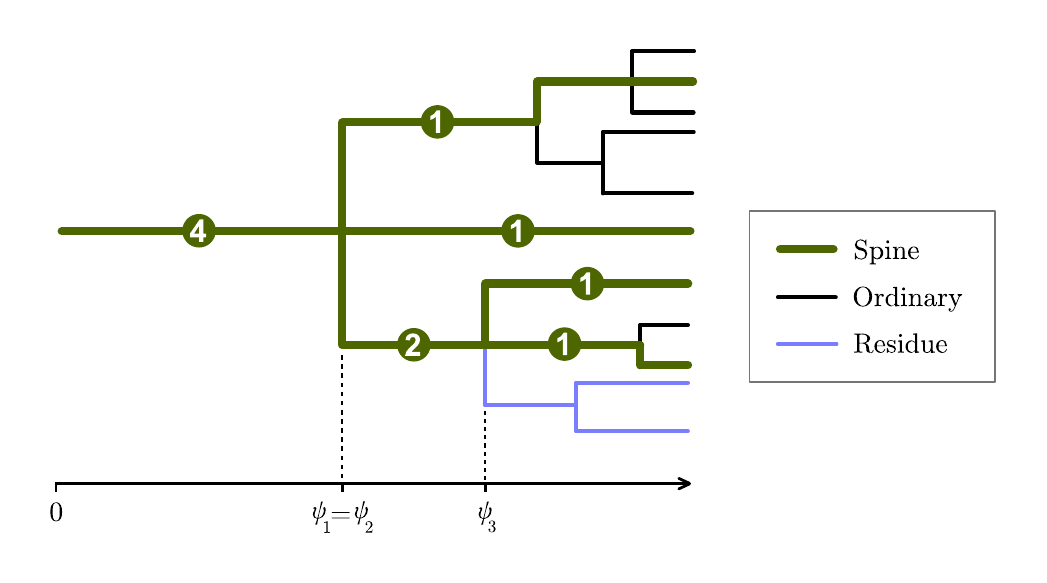}
   \vspace{-5mm}
  \caption{\small{Spines, ordinary particles and residue particles. The horizontal axis represents time. The numbers show how many marks are carried by each spine.} \label{skelfig}}
  \end{figure}

\subsection{A change of measure}

We will now introduce a new measure. Under this measure, the $k$ spines will be uniformly chosen (without replacement) at time $T$, which will allow us to represent uniformly chosen particles under $\P$ as calculations using the spines under our new measure. This very natural new measure has some remarkable properties, including the fact that it can be fully described forwards in time. Without this new measure we found calculating with uniformly chosen particles to be intractable.

Throughout the rest of this section we fix $k\ge 1$ and assume that $\P[L^k]<\infty$. This condition will be relaxed later, but it is required even to define our changed measure.

For any set $S$ and $k\ge 1$, let $S^{(k)}$ be the set of distinct $k$-tuples from $S$, and for $n\ge0$, write
\[n^{(k)} = \begin{cases} n(n-1)(n-2)\ldots (n-k+1) & \hbox{ if } n\ge k\\ 0 &\hbox{ otherwise.}\end{cases}\]
Note that $|S^{(k)}| = |S|^{(k)}$.
For $t\ge 0$, define
\[g_{k,t} := \ind_{\{\xi^i_t \neq \xi^j_t \,\forall i\neq j\}} \prod_{i=1}^k \prod_{v<\xi^i_t} L_v\]
and
\[\zeta_{k,t} := \frac{g_{k,t}}{\P[N^{(k)}_t]}.\]

\begin{lem}\label{gproj}
For any $t\ge 0$,
\[\P^k[g_{k,t}|\F^0_t] = N^{(k)}_t.\]
In particular, $\P^k[\zeta_{k,t}] = 1$.
\end{lem}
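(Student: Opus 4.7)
The plan is to compute the conditional expectation by conditioning on the underlying tree and exploiting the independent uniform choices the marks make at every branching event. Given $\F^0_t$, the branching tree itself (including every litter size $L_v$ for $v$ alive or dead by time $t$) is fixed; the only remaining randomness under $\P^k$ is the trajectory of the $k$ marks. By the construction of $\P^k$ described in Section \ref{pk_description}, each mark independently picks an offspring uniformly at random at every branching event along its line of descent. Hence for any particle $u\in\Nc_t$, the $\F^0_t$-conditional probability that a given mark ends up at $u$ is $\prod_{v<u} L_v^{-1}$, and the $k$ marks make their choices independently, so for any distinct $k$-tuple $(u_1,\ldots,u_k)\in\Nc_t^{(k)}$,
\[
\P^k\bigl(\xi^1_t=u_1,\ldots,\xi^k_t=u_k \,\big|\, \F^0_t\bigr) \;=\; \prod_{i=1}^k \prod_{v<u_i} \frac{1}{L_v}.
\]

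Next I would expand $\P^k[g_{k,t}\mid\F^0_t]$ as a sum over these distinct $k$-tuples. The indicator $\ind_{\{\xi^i_t\neq\xi^j_t\,\forall i\neq j\}}$ restricts the sum precisely to $\Nc_t^{(k)}$, and on the event $\{\xi_t=(u_1,\ldots,u_k)\}$ we have $\prod_{i=1}^k\prod_{v<\xi^i_t}L_v = \prod_{i=1}^k\prod_{v<u_i}L_v$. The product from $g_{k,t}$ therefore cancels exactly with the conditional probability from the previous display:
\[
\P^k[g_{k,t}\mid\F^0_t]\;=\;\sum_{(u_1,\ldots,u_k)\in\Nc_t^{(k)}} \prod_{i=1}^k\prod_{v<u_i}\frac{1}{L_v}\cdot \prod_{i=1}^k\prod_{v<u_i}L_v \;=\; |\Nc_t^{(k)}|\;=\;N_t^{(k)}.
\]

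Finally, taking the $\P^k$-expectation of both sides (and noting that the law of the underlying Galton-Watson tree is the same under $\P^k$ and $\P$, so $\P^k[N_t^{(k)}]=\P[N_t^{(k)}]$) immediately gives $\P^k[g_{k,t}]=\P[N_t^{(k)}]$, from which $\P^k[\zeta_{k,t}]=1$ follows by the definition of $\zeta_{k,t}$. The only real point needing care is the first step: verifying that, conditional on $\F^0_t$, the locations of the $k$ marks at time $t$ really are independent with the stated product-of-inverses distribution. This is a direct consequence of the independent uniform choices at each birth along each spine, but one should be explicit that these choices are also independent of the ordinary branching randomness, so that the cancellation above is genuinely term-by-term.
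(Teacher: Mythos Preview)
Your proof is correct and follows essentially the same approach as the paper: decompose $g_{k,t}$ as a sum over distinct $k$-tuples in $\Nc_t^{(k)}$, use the independence of the marks and the product-of-inverses formula $\P^k(\xi_t=u\mid\F^0_t)=\prod_{i=1}^k\prod_{v<u_i}L_v^{-1}$ to cancel against the weight, and conclude $N_t^{(k)}$. The paper's argument is the same line-by-line, so there is nothing further to add.
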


\begin{proof}
\begin{align*}
\P^k[g_{k,t}|\F^0_t] &= \P^k\bigg[ \sum_{u\in\Nc_t^{(k)}} \ind_{\{\xi_t = u\}} \prod_{i=1}^k \prod_{v<u_i} L_v \,\bigg|\, \F^0_t\bigg]\\
&= \sum_{u\in\Nc_t^{(k)}} \Big(\prod_{i=1}^k \prod_{v<u_i} L_v\Big) \P^k(\xi_t = u \, |\, \F^0_t).
\end{align*}
Recall that the marks act independently, and at each branching event choose uniformly amongst the available children. Therefore
\begin{equation}\label{Pspineprob}
\P^k(\xi_t = u \, |\, \F^0_t) = \prod_{i=1}^k \P^k(\xi^i_t = u_i\,|\,\F^0_t) = \prod_{i=1}^k \prod_{v<u_i} \frac{1}{L_v}.
\end{equation}
Thus
\[\P^k[g_{k,t}|\F^0_t] = \sum_{u\in\Nc_t^{(k)}} 1 = |\Nc_t^{(k)}| = N_t^{(k)}.\]
This gives the first part of the result, and taking expectations gives the second.
\end{proof}

We now fix $T>0$ and define a new probability measure $\Q^{k,T}$ by setting
\begin{equation}
\left.\frac{\d \Q^{k,T}}{\d\P^k}\right|_{\F^k_T} :=  \frac{\ind_{\{\xi^i_T \neq \xi^j_T \,\forall i\neq j\}} \prod_{i=1}^k \prod_{v<\xi^i_T} L_v}{\P[N_T(N_T-1)\dots(N_T-k+1)]}
=\zeta_{k,T}
\label{Qmeasure}
\end{equation}
Often, when the choice of $T$ and $k$ is clear, we write $\P$ instead of $\P^k$ (since $\P^k$ is an extension of $\P$ this should not cause any problems) and $\Q$ instead of $\Q^{k,T}$. 
Then, by Lemma \ref{gproj},
\begin{equation}\label{RadNik0}
\left.\frac{\d \Q^{k,T}}{\d\P^k}\right|_{\F^0_T} = 
\frac{N_T(N_T-1)\dots(N_T-k+1)}{\P[N_T(N_T-1)\dots(N_T-k+1)]}
=
\frac{N_T^{(k)}}{\P[N_T^{(k)}]}=:Z_{k,T}.
\end{equation}
To see why the measure $\Q^{k,T}$ will be useful to us, we show how to translate questions about particles sampled uniformly without replacement under $\P$ into questions about the spines under $\Q$.
\begin{prop}\label{firstprop}
Suppose that $f$ is a measurable functional of $k$-tuples of particles at time $T$. Then
\[\P\Big[\frac{1}{N_T^{(k)}} \sum_{u\in \Nc_T^{(k)}} f(u) \, \Big| \, N_T \ge k\Big] = \frac{\P[N_T^{(k)}]}{\P(N_T\ge k)(k-1)!} \int_0^\infty (e^z-1)^{k-1} \Q^{k,T}\Big[e^{-zN_T} f(\xi_T)\Big]\,\d z.\]
\end{prop}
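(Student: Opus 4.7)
The plan is to combine two ingredients: a spine-level identity showing that, under $\Q^{k,T}$, the tuple $\xi_T$ is effectively a uniform sample without replacement from $\Nc_T$; and an elementary integral representation of $1/n^{(k)}$. For the first ingredient, the same calculation as in the proof of Lemma \ref{gproj}, applied now with an extra factor $f(\xi_T)$ inside the conditional expectation, gives
\[
\P^k[g_{k,T}\,f(\xi_T)\mid\F^0_T] \;=\; \sum_{u\in\Nc_T^{(k)}} f(u),
\]
since the weight $\prod_i\prod_{v<\xi^i_T} L_v$ appearing in $g_{k,T}$ is exactly the reciprocal of the conditional spine probability \eqref{Pspineprob}. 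Combined with $\zeta_{k,T}=g_{k,T}/\P[N_T^{(k)}]$, this yields, for any $\F^0_T$-measurable $h$,
\[
\Q^{k,T}[h\,f(\xi_T)] \;=\; \frac{1}{\P[N_T^{(k)}]}\,\P\Big[h\sum_{u\in\Nc_T^{(k)}} f(u)\Big]. \qquad(\star)
\]

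Taking $h=1/N_T^{(k)}$ in $(\star)$, and noting that the sum on the right vanishes on $\{N_T<k\}$ so that no $0\cdot\infty$ ambiguity arises, I get
\[
\P\Big[\frac{1}{N_T^{(k)}}\sum_{u\in\Nc_T^{(k)}} f(u)\,\Big|\,N_T\ge k\Big] \;=\; \frac{\P[N_T^{(k)}]}{\P(N_T\ge k)}\,\Q^{k,T}\Big[\frac{f(\xi_T)}{N_T^{(k)}}\Big].
\]
The second ingredient is the calculus identity
\[
\int_0^\infty (e^z-1)^{k-1} e^{-zn}\,\d z \;=\; \frac{(k-1)!}{n^{(k)}} \qquad\text{for integers } n\ge k,
\]
which one verifies by the substitution $u=e^{-z}$, reducing the integral to the Beta integral $B(n-k+1,k)=(n-k)!(k-1)!/n!$. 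Since the indicator in $g_{k,T}$ forces the $k$ spines to be at distinct particles at time $T$, we have $N_T\ge k$ almost surely under $\Q^{k,T}$, so Fubini gives
\[
\Q^{k,T}\Big[\frac{f(\xi_T)}{N_T^{(k)}}\Big] \;=\; \frac{1}{(k-1)!}\int_0^\infty (e^z-1)^{k-1}\,\Q^{k,T}\big[e^{-zN_T} f(\xi_T)\big]\,\d z,
\]
and substituting this into the previous display produces the stated formula.

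The only technical subtlety is the Fubini step: for $f\ge0$ it is automatic, while in general one needs the mild integrability condition $\Q^{k,T}[|f(\xi_T)|/N_T^{(k)}]<\infty$, which by $(\star)$ is equivalent to $\P[(1/N_T^{(k)})\sum_u |f(u)|\mid N_T\ge k]<\infty$ and is implicit in the statement. Everything else is just bookkeeping between $\P$, $\P^k$ and $\Q^{k,T}$.
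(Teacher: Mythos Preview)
Your proof is correct and follows essentially the same route as the paper: establish that under $\Q^{k,T}$ the spine tuple acts as a uniform sample (the paper packages this as Lemma~\ref{Quniform}, you get it directly from the $\P^k[g_{k,T}f(\xi_T)\mid\F^0_T]$ computation), deduce the identity $\P[\tfrac{1}{N_T^{(k)}}\sum_u f(u)]=\P[N_T^{(k)}]\,\Q^{k,T}[f(\xi_T)/N_T^{(k)}]$, and then replace $1/N_T^{(k)}$ by the Beta-integral representation (the paper's Lemma~\ref{recip}, which you prove via the substitution $u=e^{-z}$ rather than the paper's induction on $k$). The only cosmetic difference is that you derive the more general identity $(\star)$ for arbitrary $\F^0_T$-measurable $h$ before specializing to $h=1/N_T^{(k)}$, whereas the paper goes through the conditional-uniformity statement; both arrive at the same intermediate equation \eqref{subfirstprop}.
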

We defer the proof of this result to section \ref{Qfirstpropsec}.

\subsection{Description of $\Q^{k,T}$}\label{initialQsec}

In this section, we give a full description of the measure $\Q^{k,T}$. We defer the proofs to section \ref{Qproofsec}.

Our first lemma states that $\Q^{k,T}$ satisfies a time-dependent Markov branching property, in that the descendants of any particle behave independently of the rest of the tree.

\begin{lem}[Symmetry lemma]\label{symm}
Suppose that $v\in\Nc_t$ is carrying $j$ marks at time $t$. Then, under $\Q^{k,T}$, the subtree generated by $v$ after time $t$ is independent of the rest of the system and behaves as if under $\Q^{j,T-t}$.
\end{lem}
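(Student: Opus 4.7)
The plan is to prove the lemma by exploiting the Markov branching structure of $\P^k$ together with a multiplicative factorization of the Radon--Nikodym derivative $\zeta_{k,T}$ into a piece depending only on the subtree of $v$ after time $t$ (with its $j$ marks) and a piece depending only on the rest of the system. Since $\Q^{k,T}$ is obtained from $\P^k$ by biasing by $\zeta_{k,T}$, any such factorization will translate directly into independence under $\Q^{k,T}$, provided we can identify the $v$-factor as the correct Radon--Nikodym derivative for $\Q^{j,T-t}$ applied to a freshly started subtree.

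Fix $v\in\mathcal{N}_t$ carrying the set of marks $J\subset\{1,\ldots,k\}$ with $|J|=j$. For any $i\in J$, the spine position $\xi^i_T$ is a descendant of $v$, so the strict ancestors of $\xi^i_T$ partition cleanly into the strict ancestors of $v$ together with those particles $u$ satisfying $v\le u<\xi^i_T$ (which all lie in the subtree of $v$). Writing $A(v)=\prod_{u<v}L_u$ and $B_i=\prod_{v\le u<\xi^i_T}L_u$, and observing that the distinctness indicator factorizes because $v$-descendants and non-$v$-descendants are automatically disjoint, I would obtain, on the event $\{v\in\mathcal{N}_t\text{ carries }J\}$, the decomposition
\[
\zeta_{k,T}=\frac{1}{\P[N_T^{(k)}]}\cdot A(v)^j\cdot X_v\cdot Y,
\]
where $X_v:=\ind_{\{\xi^i_T\text{ distinct for }i\in J\}}\prod_{i\in J}B_i$ is measurable with respect to the $\sigma$-algebra $\mathcal{D}_v$ generated by $v$'s subtree (with its $j$ marks) after time $t$, and $Y$ gathers all contributions of the other $k-j$ spines, hence is measurable with respect to the $\sigma$-algebra $\mathcal{R}$ generated by $\F^k_t$ together with the post-$t$ subtrees of the particles in $\mathcal{N}_t\setminus\{v\}$.

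Next I would invoke the $\P^k$-Markov property: conditional on $\F^k_t$, the subtree of $v$ with its $j$ marks evolves for time $T-t$ as a fresh $\P^j$-process, independently of $\mathcal{R}$. Under that correspondence $X_v$ is precisely the quantity $g_{j,T-t}$ of the fresh process, so for any bounded $\mathcal{D}_v$-measurable $F$ and bounded $\mathcal{R}$-measurable $G$, writing $I_v$ for the indicator of the event $\{v\in\mathcal{N}_t\text{ carries }J\}$,
\[
\P^k\big[I_v FG\,\zeta_{k,T}\big]=\frac{\P^j[F\,g_{j,T-t}]}{\P[N_T^{(k)}]}\,\P^k\big[I_v\,G\,A(v)^j\,Y\big].
\]
Taking $F=1$ and using Lemma~\ref{gproj}, i.e.~$\P^j[g_{j,T-t}]=\P[N_{T-t}^{(j)}]$, then dividing yields
\[
\frac{\Q^{k,T}[I_vFG]}{\Q^{k,T}[I_vG]}=\frac{\P^j[F\,g_{j,T-t}]}{\P[N_{T-t}^{(j)}]}=\P^j[F\,\zeta_{j,T-t}]=\Q^{j,T-t}[F],
\]
which simultaneously shows that $v$'s subtree is independent of $\mathcal{R}$ under $\Q^{k,T}$ and identifies its conditional law as $\Q^{j,T-t}$.

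The main difficulty is purely bookkeeping: one must track carefully the boundary case in which $v$ itself survives past time $T$ (so that $\xi^i_T=v$ and $B_i$ is the empty product for $i\in J$), and verify that every factor $L_u$ appearing in $\zeta_{k,T}$ is cleanly assigned to exactly one of $A(v)^j$, $X_v$, or $Y$. Once this is done, the cancellations between numerator and denominator of the conditional expectation are automatic, and the only probabilistic input needed beyond the Markov property of $\P^k$ is the ``many-to-one'' identity of Lemma~\ref{gproj}.
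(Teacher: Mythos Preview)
Your proof is correct and takes essentially the same route as the paper: both arguments hinge on the multiplicative factorization of $g_{k,T}$ (equivalently $\zeta_{k,T}$) into a piece $X_v\equiv\tilde g$ depending only on $v$'s marked subtree and an $\mathcal H$-measurable piece $A(v)^jY\equiv h$, followed by the $\P^k$-Markov property and Lemma~\ref{gproj}. The only cosmetic difference is that the paper checks the conditional law of the first branching event $(\tau_v,L_v)$ given the $\sigma$-algebra $\mathcal H$ of ``everything else,'' whereas you test against arbitrary bounded $F,G$; these are equivalent formulations of the same computation.
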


We already know from \eqref{QgivenG} and the discussion following it that particles that are not spines behave exactly as under $\P^k$: they branch at rate $r$ and have offspring distribution $L$. The behaviour of the spine particles is more complicated.

Recall that $\tau_\emptyset$ is the first branching event, and $\psi_1$ is the time of the first spine splitting event, i.e.
\[\psi_1 = \inf\{t\ge 0 : \exists i, j \hbox{ with } \xi^i_t \neq \xi^j_t\}.\]
(Note that if the spines die without giving birth to any children, this counts as a splitting event.)
By the symmetry lemma, in order to understand the split times under $\Q$, it suffices to understand the distributions of $\tau_\emptyset$ and $\psi_1$.

\begin{lem}\label{Qbots}
For any $t\in[0,T]$ and $k\ge 0$, we have
\[\Q^{k,T}(\tau_\phi > t) = \frac{\P^k[N^{(k)}_{T-t}]}{\P^k[N^{(k)}_T]} e^{-rt},\]
\[\Q^{k,T}(\psi_1 > t) = \frac{\P^k[N^{(k)}_{T-t}]}{\P^k[N^{(k)}_T]} e^{(m-1)rt},\]
and
\[\Q^{k,T}(\tau_\phi > t | \psi_1 > t) = e^{-mrt}.\]
\end{lem}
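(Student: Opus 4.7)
The plan is to prove the three identities in turn via projections of the change-of-measure density onto different $\sigma$-algebras. For the first identity, the event $\{\tau_\emptyset>t\}$ is $\F^0_t$-measurable, so the $\F^0_T$-density \eqref{RadNik0} gives $\Q^{k,T}(\tau_\emptyset>t)=\P[N_T^{(k)}\ind_{\{\tau_\emptyset>t\}}]/\P[N_T^{(k)}]$. On $\{\tau_\emptyset>t\}$ the tree at time $t$ consists of a single particle, so by the ordinary Markov property $\P[N_T^{(k)}\mid\F^0_t]=\P[N_{T-t}^{(k)}]$ on that event; combining this with $\P(\tau_\emptyset>t)=e^{-rt}$ yields the claimed formula.

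For the second identity the event $\{\psi_1>t\}$ is $\F^k_t$- but not $\F^0_t$-measurable, so I would work with $\zeta_{k,T}$ directly. On $\{\psi_1>t\}$, let $u_t\in\Nc_t$ be the common location of the $k$ marks at time $t$. Every strict ancestor $v$ of $u_t$ is also a strict ancestor of each of $\xi^1_T,\ldots,\xi^k_T$, so such a $v$ contributes the factor $L_v^k$ to the double product defining $g_{k,T}$, while the remaining strict ancestors lie in the subtree rooted at $u_t$. This gives the factorisation
\[ g_{k,T}=\Bigl(\prod_{v<u_t}L_v\Bigr)^{k}\,g_{k,T-t}^{(u_t)} \qquad\text{on }\{\psi_1>t\}, \]
where $g_{k,T-t}^{(u_t)}$ is the analogous quantity for the subtree rooted at $u_t$, regarded as a fresh $k$-spine system started at time $t$. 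By the branching property under $\P^k$ this subtree is independent of $\F^k_t$, so Lemma \ref{gproj} (applied to duration $T-t$) yields $\P^k[g_{k,T-t}^{(u_t)}\mid\F^k_t]=\P[N_{T-t}^{(k)}]$.

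It then remains to compute $\P^k[(\prod_{v<u_t}L_v)^k\ind_{\{\psi_1>t\}}]$. I would decompose by the position of $u_t=u\in\Nc_t$, condition on $\F^0_t$, and apply the independence of the marks in \eqref{Pspineprob} to obtain $\P^k(\xi^1_t=\cdots=\xi^k_t=u\mid\F^0_t)=\prod_{v<u}L_v^{-k}$. This exactly cancels the weight $(\prod_{v<u}L_v)^k$, so summing over $u\in\Nc_t$ and taking expectation leaves $\P[N_t]=e^{(m-1)rt}$. Dividing by $\P[N_T^{(k)}]$ yields the second identity. The main (and essentially only substantive) step is the factorisation of $g_{k,T}$: the crucial observation is that every shared spine ancestor of $u_t$ sits strictly above all $k$ spine descendants at time $T$, producing precisely the $k$-th power of offspring factors needed to cancel the spine-path probability.

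The third identity then follows immediately: a root that has not branched by time $t$ still carries all $k$ marks, so $\{\tau_\emptyset>t\}\subseteq\{\psi_1>t\}$, and the conditional probability is the ratio of the first two formulas; the common factor $\P[N_{T-t}^{(k)}]/\P[N_T^{(k)}]$ cancels to leave $e^{-rt}/e^{(m-1)rt}=e^{-mrt}$.
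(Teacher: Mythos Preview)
Your proposal is correct and follows essentially the same route as the paper's proof. The only cosmetic difference is in the first identity: you invoke the $\F^0_T$-density \eqref{RadNik0} and then apply the branching Markov property to $N_T^{(k)}$, whereas the paper works with $g_{k,T}$ and appeals to Lemma~\ref{gproj}; these are the same computation up to the order in which the projection of $g_{k,T}$ onto $\F^0_T$ is carried out. For the second and third identities your argument (factorise $g_{k,T}$ along the common spine ancestor, cancel the $L_v^k$ weights against the spine-path probability via \eqref{Pspineprob} to leave $\P[N_t]=e^{(m-1)rt}$, then take the ratio) is exactly what the paper does.
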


The third part of Lemma \ref{Qbots} combined with the symmetry lemma (Lemma \ref{symm}) tells us the following: given $\G^k_T$ (the information only about spine splitting events), under $\Q^{k,T}$ each spine gives birth to non-spine particles according to a Poisson process of rate $mr$, independently of everything else. In particular when there are $n$ distinct spines alive, there are $n$ independent Poisson point processes and the total rate at which non-spine particles are immigrated along the spines is $nmr$.

We call birth events that occur along the spines, but which do not occur at spine splitting events, \emph{births off the spine}. The following lemma tells us the distribution of the number of children born at such events.

\begin{lem}\label{sizebiased}
For any $j\ge0$, $k\ge 1$ and $0\le t<T$,
\[\Q^{k,T}(L_\emptyset = j | \tau_\emptyset = t, \, \psi_1>t) = \frac{jp_j}{m}.\]
\end{lem}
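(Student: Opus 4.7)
The strategy is to compute the joint law of $(L_\emptyset, \tau_\emptyset, \ind_{\{\psi_1 > t\}})$ under $\Q^{k,T}$ by unpacking the Radon--Nikodym derivative $\zeta_{k,T}$ across the first branching event at the root. First I would observe that on the event $\{\tau_\emptyset = t, \psi_1 > t\}$ the root must produce $L_\emptyset \geq 1$ children (if $L_\emptyset = 0$ all $k$ marks are sent to $\Delta$, which counts as a split), and all $k$ spines must follow the same child $c$ of $\emptyset$. On this event $\emptyset$ is a strict ancestor of $\xi^i_T$ for every $i$, so $L_\emptyset$ appears with exponent $k$ in $g_{k,T}$ and the remaining product coincides with $g'_{k,T-t}$, the analogue of $g_{k,T-t}$ for the subtree rooted at $c$ (with spines $\xi'^i_T$ obtained from $\xi^i_T$ by deleting the leading index $c$). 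The indicator $\ind_{\{\xi^i_T \neq \xi^j_T \, \forall i \neq j\}}$ also transfers verbatim to the subtree, since all $k$ spines lie inside the subtree of $c$. Hence
\[\zeta_{k,T} \, \ind_{\{\tau_\emptyset = t,\, L_\emptyset = j,\, \psi_1 > t\}} = \frac{j^k \, g'_{k,T-t}}{\P^k[N_T^{(k)}]} \, \ind_{\{\tau_\emptyset = t,\, L_\emptyset = j,\, \psi_1 > t\}}.\]

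Next, under $\P^k$ the root waits an independent exponential time with parameter $r$, is replaced by $L_\emptyset \sim L$ children, and each of the $k$ marks chooses its child independently and uniformly; thus, for $j \geq 1$,
\[\P^k(L_\emptyset = j,\, \tau_\emptyset \in dt,\, \psi_1 > t) = p_j \cdot r e^{-rt} \cdot j^{-(k-1)} \, dt,\]
where the factor $j^{-(k-1)} = j \cdot (1/j)^k$ is the probability that all $k$ marks pick the same child out of $j$. Given this event, the subtree rooted at $c$ carries all $k$ marks at its root and is independent of the history up to time $t$, so it is distributed exactly as the original system under $\P^k$ run for time $T-t$; Lemma \ref{gproj} applied in this subtree then yields $\P^k[g'_{k,T-t} \mid \tau_\emptyset = t, L_\emptyset = j, \psi_1 > t] = \P^k[N_{T-t}^{(k)}]$, which is crucially \emph{independent of $j$}.

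Combining these pieces via the definition of $\Q^{k,T}$,
\[\Q^{k,T}(L_\emptyset = j,\, \tau_\emptyset \in dt,\, \psi_1 > t) = \frac{j \, p_j \, \P^k[N_{T-t}^{(k)}] \, r e^{-rt}}{\P^k[N_T^{(k)}]}\, dt,\]
so the only $j$-dependence is the factor $jp_j$ (the $j^k$ coming from $L_\emptyset^k$ in the Radon--Nikodym derivative exactly cancels with the $1/j^{k-1}$ from the marks all picking the same child, leaving one unabsorbed $j$). Summing over $j \geq 1$ supplies the normalising constant $m = \sum_j j p_j$, giving the conditional probability $jp_j/m$ as claimed. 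The only real subtlety is the bookkeeping in the factorisation of $g_{k,T}$: one must verify carefully that on the conditioning event the product of $L_v$ over strict ancestors of $\xi^i_T$ splits simultaneously for all $i$ as $L_\emptyset$ times the analogous product inside the subtree of $c$, and that the distinctness indicator transfers cleanly. This is essentially mechanical but is the one place where any arithmetic error would destroy the size-biasing.
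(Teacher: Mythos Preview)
Your proof is correct and follows essentially the same approach as the paper: factor $g_{k,T}$ across the first branching event, observe that on $\{\tau_\emptyset=t,\,\psi_1>t\}$ the root contributes $L_\emptyset^k$ while all $k$ marks following one child contributes a factor $1/L_\emptyset^{k-1}$, and use the Markov property (via Lemma~\ref{gproj}) to see that the subtree contribution $\P^k[g_{k,T-t}]=\P^k[N_{T-t}^{(k)}]$ is independent of $j$. The only cosmetic difference is that you compute the joint density $\Q^{k,T}(L_\emptyset=j,\,\tau_\emptyset\in dt,\,\psi_1>t)$ and then normalise, whereas the paper works directly with the conditional ratio $\P[g_{k,T}\ind_{\{L_\emptyset=j,\,\psi_1>t\}}\mid\tau_\emptyset=t]\big/\P[g_{k,T}\ind_{\{\psi_1>t\}}\mid\tau_\emptyset=t]$; the underlying cancellation is identical.
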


A random variable that takes the value $j$ with probability $j p_j/m$ for each $j$ is said to be \emph{size-biased} (relative to $L$). Lemma \ref{sizebiased} then tells us (in conjunction with the symmetry lemma) that births off any spine are always size-biased, no matter how many marks are following that particular spine. (The number of marks therefore only affects spine splitting events.)

To have a complete description of the behaviour of the process under $\Q^{k,T}$, it remains to understand how the marks distribute themselves amongst the available children at a spine splitting event. To do this, we write $\mathcal P_t^\xi$ for the partition of $\{1,\ldots,k\}$ induced by letting $i$ and $j$ be in the same block if the $i$th and $j$th spines are following the same particle at time $t$. By the symmetry lemma, again it suffices to consider the first spine splitting event.

\begin{lem}\label{Qdescription}
Conditional on $\{ \psi_1 > t \}$, the $\Q^{k,T}$-conditional probability that during the time interval $[t,t+h)$, the spine particle dies and gives birth to $l$ offspring, and at this time the marks are partitioned according to a partition $P$ with blocks of sizes $a_1,\ldots,a_n$, is given by
\[\Q^{k,T} \left( \psi_1 < t+h, \, \mathcal P^\xi_{\psi_1}=P,\, L_{\xi^1_t} = l  ~\Big|~ \psi_1 > t  \right) =  p_l l^{(n)} \frac{\prod_{i=1}^n \P^k [ N_{T-t}^{(a_i)} ]}{ \P^k[N_{T-t}^{(k)} ] } (rh + o(h)).\]
\end{lem}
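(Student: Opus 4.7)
The plan is to reduce the statement to computing an initial-spine splitting probability under $\Q^{k,T-t}$, then convert back to $\P^k$ via the Radon--Nikodym derivative and exploit the branching structure.

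First, by the symmetry lemma (Lemma~\ref{symm}) applied to the unique particle carrying all $k$ marks at time $t$ (which exists on $\{\psi_1>t\}$), the subtree generated by this particle after time $t$ is distributed as a $\Q^{k,T-t}$-process with a shifted time origin. Consequently, writing $A'=\{\tau_\emptyset\in[0,h),\,L_\emptyset=l,\,\mathcal P^\xi_{\tau_\emptyset}=P\}$, it suffices to show
\[
\Q^{k,T-t}(A') \;=\; p_l\,l^{(n)}\,\frac{\prod_{b=1}^n \P[N_{T-t}^{(a_b)}]}{\P[N_{T-t}^{(k)}]}\,(rh+o(h)).
\]
(Assume $n\ge 2$; otherwise $\psi_1>\tau_\emptyset$ and one would need another branching event, contributing only $O(h^2)=o(h)$.)

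Second, by the definition of the change of measure in \eqref{Qmeasure}, $\Q^{k,T-t}(A')=\P^k[g_{k,T-t}\ind_{A'}]/\P[N_{T-t}^{(k)}]$. On $A'$, every spine is a strict descendant of $\emptyset$, so $\emptyset$ contributes the factor $L_\emptyset^k=l^k$ to the product $\prod_{i=1}^k\prod_{v<\xi^i_{T-t}}L_v$. For each block $B_b$, the $a_b$ spines in $B_b$ all pass through a common child $u_b$ of $\emptyset$, where $u_1,\dots,u_n$ are distinct; and the distinctness indicator factorises across the disjoint subtrees rooted at $u_1,\dots,u_n$. Hence on $A'$,
\[
g_{k,T-t}\;=\;L_\emptyset^k\,\prod_{b=1}^n g_{a_b,\,T-t-\tau_\emptyset}^{(u_b)},
\]
where $g_{a_b,\cdot}^{(u_b)}$ is the analogous functional in the subtree at $u_b$. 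By the Markov branching property of $\P^k$ (spine children evolve as independent $\P^{a_b}$-trees) together with Lemma~\ref{gproj}, the conditional expectation of this product given the branching data equals $\prod_{b=1}^n \P[N_{T-t-\tau_\emptyset}^{(a_b)}]$.

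Third, averaging over $\tau_\emptyset$ (exponential rate $r$), $L_\emptyset$ (law $p_l$), and the mark assignment (each mark independently uniform on $\{1,\dots,l\}$, so a given partition $P$ with $n$ blocks occurs with probability $l^{(n)}/l^k$) gives
\[
\P^k[g_{k,T-t}\ind_{A'}] \;=\; \int_0^h re^{-rs}\,p_l\cdot \frac{l^{(n)}}{l^k}\cdot l^k\cdot \prod_{b=1}^n \P[N_{T-t-s}^{(a_b)}]\,ds,
\]
and expanding to leading order in $h$ yields the claim after dividing by $\P[N_{T-t}^{(k)}]$. The main obstacle is getting the factorisation in the previous paragraph right: the key identity driving the lemma is the cancellation of $L_\emptyset^k=l^k$ (from the Radon--Nikodym weight) against the $l^{-k}$ (from the uniform mark choice), leaving precisely the combinatorial factor $l^{(n)}$ that counts injections of the $n$ blocks into the $l$ offspring. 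Once this is recognised, the remaining steps are a routine application of the branching property and Lemma~\ref{gproj}.
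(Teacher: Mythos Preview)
Your proof is correct and follows essentially the same route as the paper: reduce via the symmetry lemma to the $\Q^{k,T-t}$ initial-time statement, convert to $\P^k$ through the Radon--Nikodym density, factorise $g_{k,T-t}$ on the event as $l^k$ times the product of subtree functionals, apply Lemma~\ref{gproj} to each subtree, and observe the cancellation of $l^k$ against the $l^{-k}$ from the uniform mark assignment. The only organisational difference is that you compute $\P^k[g_{k,T-t}\ind_{A'}]$ directly as an integral over $\tau_\emptyset\in[0,h)$, whereas the paper splits it into $\P^k(A')\cdot\P^k[g_{k,T-t}\mid A']$; your explicit handling of the case $n\ge2$ (and the remark that multiple branchings in $[0,h)$ contribute $o(h)$) is if anything slightly more careful than the paper's treatment.
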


For a collection of positive integers $a_1,\ldots,a_n$ whose sum is $k$, write
\[n_j = \# \{ i : a_i = j \}, ~~ j \geq 1.\]
(Note that $\sum_{j=1}^k n_j = n$ and $\sum_{j=1}^k j n_j = k$.) Then the number of partitions of $\{1,\ldots, k\}$ into blocks of sizes $a_1,\ldots,a_n$ is
\[\frac{k!}{\prod_{i=1}^n a_i!}\frac{1}{\prod_{j=1}^k n_j!}\]
Combining this observation with Lemmas \ref{Qbots} and \ref{Qdescription} gives us the following corollary.

\begin{cor}\label{Qdesccor}
\begin{multline*}
\Q^{k,T} \left( \psi_1 \in [t, t+dt), \text{ spines split into groups of sizes } a_1,\ldots,a_n,\, L_{\xi^1_t} = l \right)\\
=  \frac{l^{(n)}p_l}{\P[L^{(n)}]}\, \frac{k!}{\prod_{i=1}^n a_i! \prod_{j=1}^{k-1}n_j!} \P[L^{(n)}]re^{(m-1)rt}\frac{\prod_{i=1}^n \P^k [ N_{T-t}^{(a_i)} ]}{ \P^k[N_{T}^{(k)} ] } dt.
\end{multline*}
\end{cor}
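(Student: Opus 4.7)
The plan is to combine Lemma \ref{Qbots} and Lemma \ref{Qdescription} directly, and then multiply by a combinatorial factor that counts how many partitions $P$ of $\{1,\ldots,k\}$ have block sizes $a_1,\ldots,a_n$. Concretely, I will first write the unconditional infinitesimal probability as the product of a survival term and a conditional rate term. That is, for any fixed partition $P$ with block sizes $a_1,\ldots,a_n$,
\begin{align*}
\Q^{k,T}\!\left(\psi_1 \in [t,t+dt),\, \mathcal P^\xi_{\psi_1}=P,\, L_{\xi^1_t}=l\right)
&= \Q^{k,T}(\psi_1 > t)\cdot \Q^{k,T}\!\left(\psi_1 \in [t,t+dt),\, \mathcal P^\xi_{\psi_1}=P,\, L_{\xi^1_t}=l \,\big|\, \psi_1>t\right).
\end{align*}

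Next, I substitute the expression for $\Q^{k,T}(\psi_1 > t)$ from Lemma \ref{Qbots} and the infinitesimal conditional probability from Lemma \ref{Qdescription}. The factor $\P^k[N_{T-t}^{(k)}]$ appears in the numerator of the first factor (times $e^{(m-1)rt}/\P^k[N_T^{(k)}]$) and in the denominator of the second (together with $p_l l^{(n)} \prod_i \P^k[N_{T-t}^{(a_i)}]$ times $rh+o(h)$), so these terms cancel. Sending $h\to 0$, this yields the density
\begin{equation*}
\frac{p_l \, l^{(n)} \, r\, e^{(m-1)rt} \, \prod_{i=1}^n \P^k[N_{T-t}^{(a_i)}]}{\P^k[N_T^{(k)}]}\, dt
\end{equation*}
for a single prescribed partition $P$.

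Finally, to obtain the probability of the event in the corollary, which only specifies the multiset of block sizes and not the labelling, I multiply by the number of partitions of $\{1,\ldots,k\}$ whose block sizes match the unordered collection $a_1,\ldots,a_n$. A standard multinomial count gives
\begin{equation*}
\frac{k!}{\prod_{i=1}^n a_i! \, \prod_{j=1}^{k} n_j!},
\end{equation*}
where $n_j = \#\{i : a_i = j\}$. Since $\psi_1$ is a genuine spine splitting event we have $n\ge 2$, so necessarily $n_k=0$ and the product in the denominator can be truncated at $j=k-1$. Inserting this factor and rewriting using the (trivial) identity $p_l l^{(n)} = \frac{p_l l^{(n)}}{\P[L^{(n)}]}\cdot \P[L^{(n)}]$, which matches the particular form of the corollary's display, gives exactly the stated expression.

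No genuine obstacle remains: all the analytic content is encapsulated in the two preceding lemmas. The only point to be slightly careful about is the combinatorial counting of partitions with prescribed (unordered) block sizes and the observation that the $n_k!$ term can be dropped because $n \ge 2$ at a split event.
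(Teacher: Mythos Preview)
Your proposal is correct and follows exactly the approach indicated in the paper: the corollary is stated as an immediate consequence of combining Lemma \ref{Qbots} (for $\Q^{k,T}(\psi_1>t)$) with Lemma \ref{Qdescription} (for the conditional rate), together with the combinatorial count of partitions with prescribed block sizes. Your explanation of the cancellation of $\P^k[N_{T-t}^{(k)}]$ and the justification for truncating the product at $j=k-1$ (via $n_k=0$ when $n\ge 2$) are both accurate and in fact spell out details the paper leaves implicit.
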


\subsection{Understanding the measure $\Q^{k,T}$ as $T\to\infty$}\label{QTlargeTsec}
To help the reader to understand our results from the previous section, particularly Corollary \ref{Qdesccor}, we let $T\to\infty$ and ask what happens to the tree drawn out by the spines. For brevity we will concentrate on the critical case $m=1$, although similar calculations could be done in near-critical cases when $m=1+\mu/T + o(1/T)$. Take $m=1$, $n=2$ and $t=sT$ in Corollary \ref{Qdesccor}; if $a_1\neq a_2$ then we get
\begin{multline*}
\Q^{k,T} \left( \psi_1 \in [sT, sT+Tds), \text{ spines split into two groups of sizes } a_1,a_2,\, L_{\xi^1_{sT}} = l \right)\\
=  \frac{l(l-1)p_l}{\P[L(L-1)]}\, \frac{k!}{a_1!a_2!} \P[L(L-1)]r\frac{\P^k [ N_{T(1-s)}^{(a_1)} ]\P^k [ N_{T(1-s)}^{(a_2)} ]}{ \P^k[N_{T}^{(k)} ] } T ds.
\end{multline*}
We now let $T\to\infty$ and use Kolmogorov's theorem that $T\P(N_{uT}>0)\to 2/(\sigma^2 r u)$, as well as Yaglom's theorem which says that conditionally on survival, $N_{uT}/T$ converges in distribution to an exponential random variable of parameter $2/(\sigma^2 r u)$. Letting $\mathcal E_1$ be exponentially distributed with parameter $2/(\sigma^2 r (1-s))$ and $\mathcal E_2$ be exponentially distributed with parameter $2/(\sigma^2 r)$, this gives
\begin{align*}
&\lim_{T\to\infty}\Q^{k,T} \left( \psi_1 \in [sT, sT+Tds), \text{ spines split into two groups of sizes } a_1,a_2,\, L_{\xi^1_{sT}} = l \right)\\
&= \frac{l(l-1)p_l}{\P[L(L-1)]}\, \frac{k!}{a_1!a_2!} \P[L(L-1)]r\frac{\frac{2}{T\sigma^2 r(1-s)}T^{a_1}\P^k [ \mathcal E_1^{a_1} ]\frac{2}{T\sigma^2 r(1-s)}T^{a_2}\P^k [ \mathcal E_1^{a_2} ]}{ \frac{2}{T\sigma^2 r}T^{k}\P^k [ \mathcal E_1^{k} ] } ds\\
&= l(l-1)p_l\, r \frac{(\sigma^2 r(1-s)/2)^{a_1-1}(\sigma^2 r(1-s)/2)^{a_2-1}}{ (\sigma^2 r/2)^{k-1} } ds\\
&= l(l-1)p_l \frac{2}{\sigma^2}(1-s)^{k-2}ds.
\end{align*}
If $a_1=a_2$ then there is an extra factor of $1/2$ as the two blocks can be rearranged indistinguishably.

As there are $k-1$ possible (ordered) ways of splitting $k$ into two groups of non-zero size, and from the above each of these ways is equally likely,
\begin{multline*}
\lim_{T\to\infty}\Q^{k,T} \left( \psi_1 \in [sT, sT+Tds), \text{ spines split into two groups},\, L_{\xi^1_{sT}} = l \right)\\
= \frac{l(l-1)}{\sigma^2}p_l (k-1)(1-s)^{k-2}ds.
\end{multline*}
We note that if we sum the above quantity over $l$ and integrate over $s\in[0,1]$ we obtain $1$. This means that, in the limit as $T\to\infty$, at the first spine splitting event $\psi_1$, the $k$ spines always split into exactly two groups. We also see that the number of spines in each of the groups is uniform on $\{1,\ldots,k-1\}$, and the total number of offspring at this time is doubly-size-biased. Finally, the first splitting time, when rescaled by $1/T$, converges in distribution to the minimum of $k$ independent uniform random variables on $[0,1]$.

The symmetry lemma, Lemma \ref{symm}, tells us that we can extend our understanding of the \emph{first} spine splitting event to all spine splitting events. When a collection of spines decides to split, they always (in the limit as $T\to\infty$) split uniformly into two groups; this property is shared by the tree drawn out by the Kingman coalescent. Furthermore the $k-1$ spine split times, when rescaled by $1/T$, are independent and uniformly distributed on $[0,1]$.

We stress again that this is true only in the critical case; if instead we are in the near-critical case when $m=1+\mu/T+o(1/T)$ (see Section \ref{nearcritresultssec}) then the uniform density for the independent split times is replaced by $\frac{r\mu e^{r\mu s}}{e^{r\mu}-1}ds$. In particular, the near-critical case is simply a deterministic time-change of the critical picture.

\subsection{Proofs of properties of $\Q^{k,T}$}\label{Qproofsec}

We start this section with the proof of the symmetry lemma.

\begin{proof}[Proof of Lemma \ref{symm}]
Fix $t,T$ and $v$. Let $\H$ be the $\sigma$-algebra generated by all the information except in the subtree generated by $v$ after time $t$. Then it suffices to show that for $s\in(t,T]$ and $i\ge 0$,
\[\Q^{k,T}(\tau_v > s,\, L_v = i | \H) = \Q^{j,T-t}(\tau_\emptyset > s-t, \, L_\emptyset = i)\]
almost surely.

Recall that
\[g_{k,T} = \ind_{\{\xi^i_T \neq \xi^j_T \,\forall i\neq j\}} \prod_{i=1}^k \prod_{v<\xi^i_T} L_v\]
and
\[\zeta_{k,T} = \frac{g_{k,T}}{\P[N^{(k)}_T]}.\]
Let $I$ be the set of marks carried by $v$ at time $t$, and let
\[\tilde g = \ind_{\{\xi^i_T \neq \xi^j_T \, \forall i\neq j,\, i,j\in I^c\}} \prod_{i\in I}\prod_{\xi^i_t\le v<\xi^i_T} L_v\]
and
\[h = \ind_{\{\xi^i_T \neq \xi^j_T \, \forall i\neq j,\, i,j\in I^c\}} \Big(\prod_{i\not\in I}\prod_{v<\xi^i_T} L_v\Big) \prod_{i\in I}\prod_{v< \xi^i_t} L_v.\]
Note that $h$ is $\H$-measurable and $g_{k,T}=\tilde g h$.

By Lemma \ref{condform}, $\Q^{k,T}$-almost surely,
\[\Q^{k,T}(\tau_v > s, \, L_v = i | \H) = \frac{1}{\P^k[\zeta_{k,T}|\H]} \P^k[\zeta_{k,T}\ind_{\{\tau_v > s, \, L_v = i\}} | \H].\]
Cancelling factors of $\P^k[N^{(k)}_T]$ and using the fact that $g_{k,T}=\tilde g h$ where $h$ is $\H$-measurable, we get
\[\Q^{k,T}(\tau_v > s, \, L_v = i | \H) = \frac{1}{h\P^k[\tilde g|\H]} h\P^k[\tilde g \ind_{\{\tau_v > s, \, L_v = i\}} | \H] = \frac{\P^k[\tilde g \ind_{\{\tau_v > s, \, L_v = i\}} | \H]}{\P^k[\tilde g|\H]}.\]
By the Markov branching property under $\P^k$, the behaviour of the subtree generated by $v$ after time $t$ is independent of the rest of the system and---on the event that $v$ is carrying $j$ marks at time $t$---behaves as if under $\P^j$. Thus
\[\Q^{k,T}(\tau_v > s, \, L_v = i | \H) = \frac{\P^j[g_{j,T-t} \ind_{\{\tau_\emptyset > s - t,\, L_\emptyset = i\}}]}{\P^j[g_{j,T-t}]}.\]
almost surely. Applying Lemma \ref{gproj} establishes the result.
\end{proof}

We now move on to the proof of Lemma \ref{Qbots}, which gives the distribution of the split times under $\Q^{k,T}$.

\begin{proof}[Proof of Lemma \ref{Qbots}]
For the first statement,
\[\Q(\tau_\phi > t) = \P[\zeta_{k,T}\ind_{\{\tau_\emptyset > t\}}] = \frac{1}{\P[N^{(k)}_T]}\P[g_{k,T}\ind_{\{\tau_\emptyset > t\}}].\]
By the Markov property and Lemma \ref{gproj},
\[\P[g_{k,T}\ind_{\{\tau_\emptyset > t\}}] = \P(\tau_\emptyset>t)\P[g_{k,T-t}] = e^{-rt}\P[N^{(k)}_{T-t}]\]
as required.

For the second statement,
\[\Q(\psi_1 > t) = \P[\zeta_{k,T}\ind_{\{\psi_1 > t\}}] = \frac{1}{\P[N^{(k)}_T]}\P[g_{k,T}\ind_{\{\psi_1 > t\}}],\]
and by the Markov property and Lemma \ref{gproj},
\[\P[g_{k,T}\ind_{\{\psi_1 > t\}}] = \P\Big[\Big(\prod_{v<\xi^1_t} L_v^k\Big)\ind_{\{\psi_1>t\}}\Big]\P[g_{k,T-t}] = \P\Big[\Big(\prod_{v<\xi^1_t} L_v^k\Big)\ind_{\{\psi_1>t\}}\Big]\P[N^{(k)}_{T-t}].\]
Putting these two lines together we get
\begin{equation}\label{Qpsiinterm}
\Q(\psi_1 > t) = \frac{\P^k[N^{(k)}_{T-t}]}{\P^k[N^{(k)}_T]}\P\Big[\Big(\prod_{v<\xi^1_t} L_v^k\Big)\ind_{\{\psi_1>t\}}\Big].
\end{equation}
Note that $\psi>t$ if and only if all $k$ marks are following the same particle at time $t$ (which must also be alive); thus
\[\P\Big[\Big(\prod_{v<\xi^1_t} L_v^k\Big)\ind_{\{\psi_1>t\}}\Big] = \P\Big[\sum_{u\in\Nc_t} \Big(\prod_{v<u} L_v^k\Big) \ind_{\{\xi^1_t = \ldots = \xi^k_t = u\}}\Big] = \P\Big[\sum_{u\in\Nc_t} 1\Big] = \P[N_t] = e^{(m-1)t}.\]
Substituting back into \eqref{Qpsiinterm} gives the desired result.

The third statement follows easily from the first two.
\end{proof}

We next prove Lemma \ref{sizebiased}, which says that births off the spine are size-biased.

\begin{proof}[Proof of Lemma \ref{sizebiased}]
From the definition of $\Q$,
\begin{align*}
\Q(L_\emptyset = j | \tau_\emptyset = t, \, \psi_1>t) &= \frac{\P[\zeta_{k,T}\ind_{\{L_\emptyset=j\}}|\tau_\emptyset=t,\,\psi_1>t]}{\P[\zeta_{k,T}|\tau_\emptyset=t,\,\psi_1>t]}\\
&= \frac{\P[g_{k,T}\ind_{\{L_\emptyset=j\}}|\tau_\emptyset=t,\,\psi_1>t]}{\P[g_{k,T}|\tau_\emptyset=t,\,\psi_1>t]}\\
&= \frac{\P[g_{k,T}\ind_{\{L_\emptyset=j,\, \psi_1>t\}}|\tau_\emptyset=t]}{\P[g_{k,T}\ind_{\{\psi_1>t\}}|\tau_\emptyset=t]}.
\end{align*}
If the first particle has $i$ offspring, then the product appearing in the definition of $g_{k,T}$ sees a factor of $i^k$; and the probability that all $k$ spines follow the same one of these offspring is $1/i^{k-1}$. Thus, by the Markov property, for any $i$,
\[\P[g_{k,T}\ind_{\{L_\emptyset=i,\, \psi_1>t\}}|\tau_\emptyset=t] = p_i i^k \frac{1}{i^{k-1}}\P[g_{k,T-t}] = i p_i \P[g_{k,T-t}].\]
Thus
\[\Q(L_\emptyset = j | \tau_\emptyset = t, \, \psi_1>t) = \frac{j p_j \P[g_{k,T-t}]}{\sum_i i p_i \P[g_{k,T-t}]} = \frac{j p_j}{m}.\qedhere\]
\end{proof}

The final proof in this section is of Lemma \ref{Qdescription}, which completed the description of $\Q^{k,T}$.

\bp[Proof of Lemma \ref{Qdescription}]
By the symmetry lemma, for any $h\in(0,T-t]$,
\[ \Q^{k,T} \left( \psi_1 < t+h, \, \mathcal P^\xi_{\psi_1}=P,\, L_{\xi^1_t} = l  ~\Big|~ \psi_1 > t  \right) = \Q^{k,T-t} \left( \psi_1 < h,\, \mathcal P^\xi_{\psi_1}=P,\, L_{\emptyset}=l \right).  \]
By the definition of $\Q^{k,T-t}$, this is equal to
\begin{multline}\label{condongamma}
\frac{1}{ \P^k[N_{T-t}^{(k)} ] } \P^k  \left[   g_{k,T-t}\,  \psi_1 < h,\, \mathcal P^\xi_{\psi_1}=P,\, L_{\emptyset}=l  \right]\\
= \frac{1}{ \P^k[N_{T-t}^{(k)} ] } \P^k  \big(  \psi_1 < h,\, \mathcal P^\xi_{\psi_1}=P,\, L_{\emptyset}=l \big)  \P^k \big[ g_{k,T-t} \big| \psi_1 < h,\, \mathcal P^\xi_{\psi_1}=P,\, L_{\emptyset}=l\big]
\end{multline}
First we consider
\begin{align*}
 \P^k  \left(    \psi_1 < h,\, \mathcal P^\xi_{\psi_1}=P,\, L_{\emptyset}=l  \right)  &= \P^k ( \psi_1 < h , L_\emptyset = l )~ \P^k ( \mathcal P^\xi_{\psi_1}=P | \psi_1 < h , L_\emptyset = l )\\
\end{align*}
since $l^{(n)}/l^k$ is the probability that $k$ balls put uniformly and independently into $l$ bins give rise to the partition $P$.

Next we consider
\[\P^k \big[ g_{k,T-t} \big| \psi_1 < h,\, \mathcal P^\xi_{\psi_1}=P,\, L_{\emptyset}=l\big].\]
Note that on the event $\{\mathcal P^\xi_{\psi_1}=P,\, L_{\emptyset}=l\}$, we have
\begin{align*}
g_{k,T-t} &= \ind_{\{\xi^i_{T-t} \neq \xi^j_{T-t} \,\forall i\neq j\}} \prod_{i=1}^k \prod_{v<\xi^i_{T-t}} L_v\\
&= l^k \prod_{p\in P} \ind_{\{\xi^i_{T-t} \neq \xi^j_{T-t} \,\forall i\neq j \in p\}} \prod_{i\in p} \prod_{\xi^i_{\psi_1}\le v<\xi^i_{T-t}} L_v.
\end{align*}
Lemma \ref{gproj} tells us that for each $p\in P$, on the event $\{\mathcal P^\xi_{\psi_1}=P,\, L_{\emptyset}=l\}$,
\[\P^k\Big[\ind_{\{\xi^i_{T-t} \neq \xi^j_{T-t} \,\forall i\neq j \in p\}} \prod_{i\in p} \prod_{\xi^i_{\psi_1}\le v<\xi^i_{T-t}} L_v \,\Big|\,\F^k_{\psi_1}  \Big] = \P^k\big[N^{(|p|)}_{T-t-u}\big]\big|_{u=\psi_1}\]
On the event $\psi_1<h$, we have
\[\P^k\big[N^{(|p|)}_{T-t-u}\big]\big|_{u=\psi_1} = \P^k\big[N^{(|p|)}_{T-t}\big] + o(h)\]
and therefore
\begin{align*}
\P^k \big[ g_{k,T-t} \big| \psi_1 < h,\, \mathcal P^\xi_{\psi_1}=P,\, L_{\emptyset}=l\big]  = l^k \prod_{i=1}^n \P^k [ N_{T-t}^{(a_i)} ] + o(h).
\end{align*}
Putting these calculations back into \eqref{condongamma}, we have shown that
\begin{align*}
\Q^{k,T} \Big( \psi_1 < t+h, \, \mathcal P^\xi_{\psi_1}=P,\, L_{\xi^1_t} = l & ~\Big|~ \psi_1 > t  \Big)\\
&= \frac{1}{ \P^k[N_{T-t}^{(k)} ] } (rh + o(h))p_l \frac{l^{(n)}}{l^k} l^k \Big(\prod_{i=1}^n \P^k [ N_{T-t}^{(a_i)} ] + o(h)\Big)\\
&=  p_l l^{(n)} \frac{\prod_{i=1}^n \P^k [ N_{T-t}^{(a_i)} ]}{ \P^k[N_{T-t}^{(k)} ] } (rh + o(h))
\end{align*}
which completes the proof.
\ep

\subsection{Proof of Proposition \ref{firstprop}}\label{Qfirstpropsec}

Before we prove Proposition \ref{firstprop}, we develop several partial results along the way. The following simple general lemma will be useful.

\begin{lem}\label{condform}
Suppose that $\mu$ and $\nu$ are probability measures on the $\sigma$-algebra $\F$, and that $\G$ is a sub-$\sigma$-algebra of $\F$. If
\[\left.\frac{\d\mu}{\d\nu}\right|_\F = Y \hspace{4mm} \hbox{ and } \hspace{4mm} \left.\frac{\d\mu}{\d\nu}\right|_\G = Z,\]
then for any non-negative random variable $X$,
\[Z\mu[X|\G] = \nu[XY|\G] \hspace{4mm} \nu\hbox{-almost surely}.\]
\end{lem}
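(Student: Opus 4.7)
The plan is to verify the claimed identity by pairing both sides against an arbitrary non-negative $\G$-measurable test random variable $W$ and showing the two resulting expectations under $\nu$ agree. Both sides of the claimed equality are $\G$-measurable, so this is enough to conclude that they coincide $\nu$-almost surely.

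First I would compute $\nu[W Z \mu[X|\G]]$. Since $W \mu[X|\G]$ is $\G$-measurable and $Z = d\mu/d\nu|_\G$, the change-of-measure formula on $\G$ gives
\[\nu[W Z \mu[X|\G]] = \mu[W \mu[X|\G]].\]
Next, since $W$ is $\G$-measurable and by the defining property of $\mu[X|\G]$,
\[\mu[W \mu[X|\G]] = \mu[W X].\]
Now switch back to $\nu$ on the full $\sigma$-algebra $\F$ using $Y = d\mu/d\nu|_\F$:
\[\mu[W X] = \nu[W X Y].\]
Finally, since $W$ is $\G$-measurable, the tower property under $\nu$ yields
\[\nu[W X Y] = \nu[W \nu[X Y | \G]].\]
Chaining these equalities gives $\nu[W \cdot Z \mu[X|\G]] = \nu[W \cdot \nu[XY|\G]]$ for every non-negative $\G$-measurable $W$, and since both $Z \mu[X|\G]$ and $\nu[XY|\G]$ are $\G$-measurable, this forces $Z \mu[X|\G] = \nu[XY|\G]$ $\nu$-almost surely.

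The only delicate point is the handling of the $\nu$-null set $\{Z=0\}$: on this set $\mu$ places no mass, so the value of $\mu[X|\G]$ there is only defined up to $\mu$-null sets, but multiplying by $Z$ kills any ambiguity, and the identity is asserted only $\nu$-a.s., so this causes no issue. I do not anticipate a serious obstacle; the argument is a one-line application of the abstract Bayes formula, and the only thing to be careful about is to apply the two change-of-measure formulas on the correct $\sigma$-algebras ($Z$ against $\G$-measurable integrands, $Y$ against $\F$-measurable integrands).
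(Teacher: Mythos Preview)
Your proof is correct and essentially identical to the paper's: the paper tests against indicators $\ind_A$ for $A\in\G$ rather than general non-negative $\G$-measurable $W$, and runs the same chain of equalities $\nu[XY\ind_A]=\mu[X\ind_A]=\mu[\mu[X|\G]\ind_A]=\nu[Z\mu[X|\G]\ind_A]$ in the reverse order, then appeals to the definition of conditional expectation. Your added remark about the $\{Z=0\}$ null set is a nice clarification the paper omits.
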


\begin{proof}
For any $A\in\G$,
\[\nu[XY\ind_A] = \mu[X\ind_A] = \mu[\mu[X|\G]\ind_A] = \nu[Z\mu[X|\G] \ind_A].\]
Since $Z\mu[X|\G]$ is $\G$-measurable, it therefore satisfies the definition of conditional expectation of $XY$ with respect to $\G$ under $\nu$.
\end{proof}

Applying this to our situation, we get that for any non-negative $\F^k_T$-measurable random variable $X$, on the event $Z_{k,T}>0$,
\begin{equation}\label{QgivenF}
\Q^{k,T}[X|\F^0_T] = \frac{1}{Z_{k,T}}\P^k[X\zeta_{k,T}|\F^0_T],
\end{equation}
and on the event $\zeta_{k,T}>0$, since $\zeta_{k,T}$ is $\Gt^k_T$-measurable,
\begin{equation}\label{QgivenG}
\Q^{k,T}[X|\Gt^k_T] = \frac{1}{\zeta_{k,T}}\P^k[X\zeta_{k,T}|\Gt^k_T] = \P^k[X|\Gt^k_T].
\end{equation}
This last equation \eqref{QgivenG} tells us in particular that any event that is independent of $\Gt^k_T$ has the same probability under $\Q$ as under $\P$. In other words, non-spine particles behave under $\Q$ exactly as they do under $\P$: they branch at rate $r$ and have offspring distribution $L$.

Also note that under $\Q^{k,T}$, the $k$ spine particles are almost surely distinct at time $T$, since directly from the definition of $\zeta_{k,T}$,
\[\Q^{k,T}(\exists i\neq j : \xi^i_T=\xi^j_T) = \P[\zeta_{k,T}\ind_{\{\exists i\neq j : \xi^i_T=\xi^j_T\}}] = 0.\]
In fact, the next lemma tells us that under $\Q^{k,T}$, the spines are chosen uniformly without replacement from those alive at time $T$.

\begin{lem}\label{Quniform}
For any $u\in \Nc_T^{(k)}$, on the event $N_T\ge k$,
\[\Q^{k,T}(\xi_T = u | \F^0_T) = \frac{1}{N_T^{(k)}}.\]
\end{lem}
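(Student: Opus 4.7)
The plan is to apply the conditional Radon–Nikodym identity from Lemma \ref{condform}, taking $\mu = \Q^{k,T}$, $\nu = \P^k$, $\F = \F^k_T$ and $\G = \F^0_T$. Since $\d\Q^{k,T}/\d\P^k|_{\F^k_T} = \zeta_{k,T}$ and $\d\Q^{k,T}/\d\P^k|_{\F^0_T} = Z_{k,T} = N_T^{(k)}/\P[N_T^{(k)}]$, on the event $\{N_T\ge k\}$ (where $Z_{k,T}>0$) the lemma yields
\[
\Q^{k,T}(\xi_T = u \mid \F^0_T) \;=\; \frac{1}{Z_{k,T}}\,\P^k\big[\ind_{\{\xi_T=u\}}\zeta_{k,T}\,\big|\,\F^0_T\big] \;=\; \frac{1}{N_T^{(k)}}\,\P^k\big[\ind_{\{\xi_T=u\}}\,g_{k,T}\,\big|\,\F^0_T\big].
\]
So the task reduces to computing the conditional expectation on the right.

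Next I would use the explicit form of $g_{k,T}$. For $u\in\Nc_T^{(k)}$, the components $u_1,\dots,u_k$ are by definition distinct, so on the event $\{\xi_T = u\}$ the indicator $\ind_{\{\xi^i_T\neq \xi^j_T\,\forall i\neq j\}}$ inside $g_{k,T}$ equals $1$. Moreover, on this event $\prod_{i=1}^k\prod_{v<\xi^i_T} L_v = \prod_{i=1}^k\prod_{v<u_i}L_v$, which is $\F^0_T$-measurable. Pulling it out of the conditional expectation gives
\[
\P^k\big[\ind_{\{\xi_T=u\}}\,g_{k,T}\,\big|\,\F^0_T\big] \;=\; \Big(\prod_{i=1}^k\prod_{v<u_i}L_v\Big)\,\P^k(\xi_T = u \mid \F^0_T).
\]

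Now I would invoke \eqref{Pspineprob}, which says $\P^k(\xi_T=u\mid\F^0_T) = \prod_{i=1}^k\prod_{v<u_i}(1/L_v)$, reflecting the fact that under $\P^k$ the $k$ marks evolve independently and choose uniformly among available offspring at each branching event. The two products telescope exactly, so the conditional expectation equals $1$. Substituting back gives $\Q^{k,T}(\xi_T=u\mid\F^0_T) = 1/N_T^{(k)}$, as required.

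There is no real obstacle here; the argument is essentially a bookkeeping exercise, and the crucial observation is the telescoping cancellation between the factors $\prod L_v$ appearing in $g_{k,T}$ (which were inserted precisely to cancel the size-biasing inherent in the $k$-spine construction) and the factors $\prod 1/L_v$ coming from the uniform mark-allocation rule in the definition of $\P^k$. The only small point to be careful about is that we need $Z_{k,T}>0$ in order to invoke Lemma \ref{condform}, which holds on the event $\{N_T\ge k\}$ on which the statement is made.
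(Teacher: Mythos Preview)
Your proposal is correct and takes essentially the same approach as the paper: both apply the conditional Radon--Nikodym identity (you cite Lemma \ref{condform} directly, the paper cites its consequence \eqref{QgivenF}), then pull out the $\F^0_T$-measurable product $\prod_{i=1}^k\prod_{v<u_i}L_v$, and finally invoke \eqref{Pspineprob} for the telescoping cancellation.
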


\begin{proof}
Note that if $N_T\ge k$ then $Z_{k,T}>0$. Then by \eqref{QgivenF}, for any $u\in \Nc_T^{(k)}$,
\[\Q(\xi_T = u | \F^0_T)  = \frac{1}{Z_{k,T}} \P[\zeta_{k,T}\ind_{\{\xi_T = u\}} | \F^0_T ] = \frac{\P[N_T^{(k)}]}{N_T^{(k)}} \frac{1}{\P[N_T^{(k)}]}\Big(\prod_{i=1}^k \prod_{v<u_i} L_v\Big) \P(\xi_t = u | \F^0_T).\]
The result now follows by applying \eqref{Pspineprob}.
\end{proof}

As part of proving Proposition \ref{firstprop} we will need to calculate quantities like $\Q^{k,T}[1/N_T^{(k)}|\G^k_T]$. The next lemma allows us to work with moment generating functions, which are somewhat easier to deal with and will lead to an important product structure from the  independent contributions to $N_T$ along different branches of the $k$ spines' genealogical tree under $\Q^{k,T}$.

\begin{lem}\label{recip}
For any $k\in\N$ and positive integer valued random variable $N$ under an expectation operator $E$, we have
\[
E \left[\frac{1}{N(N-1)\dots(N-k+1)}\right]
= \frac{1}{(k-1)!} \int_0^\infty (e^z-1)^{k-1} E\left[e^{-zN}\right] \d z.
\]

In particular, for any $k\in\N$ and $T\ge 0$,
\[\Q^{k,T}\Big[\frac{1}{N_T^{(k)}}\Big|\G^k_T\Big] = \frac{1}{(k-1)!} \int_0^\infty (e^z-1)^{k-1}\Q^{k,T}[e^{-zN_T}|\G^k_T]\, \d z.\]
\end{lem}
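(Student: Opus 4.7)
The plan is to reduce the statement to a pointwise identity for each integer $n \ge k$, and then promote it to the expectation (and conditional expectation) statements via Tonelli's theorem. All integrands involved are non-negative, so Tonelli applies without fuss, provided we first verify that the pointwise identity actually holds and provided we check that $N \ge k$ (equivalently $N^{(k)} \ge 1$) wherever the left hand side is used. For the second half of the lemma, note that under $\Q^{k,T}$ the $k$ spines are almost surely distinct at time $T$ (directly from the definition of $\zeta_{k,T}$, as the authors already observed), hence $N_T \ge k$ a.s.\ under $\Q^{k,T}$, so $1/N_T^{(k)}$ is a.s.\ finite.

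The core task is therefore the pointwise identity
\[\frac{1}{n(n-1)\cdots(n-k+1)} = \frac{1}{(k-1)!} \int_0^\infty (e^z-1)^{k-1} e^{-zn}\,\d z \qquad (n \ge k).\]
I would prove this by the substitution $u = e^{-z}$, which transforms the integral into a Beta integral:
\[\int_0^\infty (e^z-1)^{k-1} e^{-zn}\,\d z = \int_0^1 (1-u)^{k-1} u^{n-k}\,\d u = B(n-k+1,k) = \frac{(n-k)!\,(k-1)!}{n!},\]
using $B(a,b)=\Gamma(a)\Gamma(b)/\Gamma(a+b)$. Rearranging gives the claim. (An alternative would be to expand $(e^z-1)^{k-1}$ by the binomial theorem and compare term by term against the partial fraction expansion of $(k-1)!/[n(n-1)\cdots(n-k+1)]$, but the substitution is cleaner.)

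With the pointwise identity in hand, the first displayed equation follows immediately by multiplying both sides by the indicator $\ind_{\{N \ge k\}}$, applying $E$, and invoking Tonelli to exchange $E$ with $\int_0^\infty \d z$; the non-negativity of $(e^z-1)^{k-1} e^{-zN}$ is the only hypothesis needed. The conditional version is identical: $\Q^{k,T}$-a.s.\ we may apply the pointwise identity to $N_T$ (since $N_T \ge k$ a.s.), then take conditional expectation given $\G^k_T$ and again swap the order of integration by Tonelli.

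The main (modest) obstacle is simply recognising the right substitution to reduce the integral to a Beta function; once that is done the rest is routine. There is nothing delicate about the measure-theoretic exchanges since everything in sight is non-negative, and the a.s.\ condition $N_T \ge k$ under $\Q^{k,T}$ handles the only point where the left hand side could conceivably misbehave.
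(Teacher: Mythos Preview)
Your proof is correct and takes a genuinely different route from the paper. The paper establishes the identity by induction on $k$: the base case $k=1$ is the elementary $1/N = \int_0^\infty e^{-zN}\,\d z$, and the inductive step splits $(e^z-1)^j = (e^z-1)^{j-1}e^z - (e^z-1)^{j-1}$ to obtain the recursion
\[\int_0^\infty (e^z-1)^j E[e^{-zN}]\,\d z = (j-1)!\,E\Big[\frac{1}{(N-1)^{(j)}}\Big] - (j-1)!\,E\Big[\frac{1}{N^{(j)}}\Big] = j!\,E\Big[\frac{1}{N^{(j+1)}}\Big].\]
Your substitution $u=e^{-z}$ collapsing the integral to $B(n-k+1,k)$ is a cleaner one-shot computation that bypasses the induction entirely; the paper's argument is slightly more self-contained in not invoking the Beta/Gamma identity, but yours gets to the answer faster and makes the structure of the formula (a disguised Beta integral) transparent. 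Your remarks on Tonelli and on $N_T\ge k$ a.s.\ under $\Q^{k,T}$ are the right justifications for the passage to expectations.
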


\begin{proof}
We show, by induction on $j$, that for all $j=1,\ldots,k$,
\[E\left[\frac{1}{N^{(j)}}\right] = \frac{1}{(j-1)!} \int_0^\infty (e^z-1)^{j-1}E[e^{-zN}]\, \d z.\]
For $j=1$, by Fubini's theorem,
\[E\left[\frac{1}{N}\right] = E\left[\int_0^\infty e^{-zN} \d z \right] = \int_0^\infty E\left[e^{-zN}\right]\, \d z.\]
For the general step, observe that for $j\le k-1$,
\begin{multline*}
\int_0^\infty (e^z-1)^j E[e^{-zN}]\,\d z 
= \int_0^\infty (e^z-1)^{j-1}E[e^{-z(N-1)}]\d z - \int_0^\infty (e^z-1)^{j-1}E[e^{-zN}]\,\d z
\end{multline*}
and by the induction hypothesis, this equals
\begin{align*}
(j-1)!\,E\left[\frac{1}{(N-1)^{(j)}}\right] - (j-1)!\,E\left[\frac{1}{N^{(j)}}\right]
&= (j-1)! \,E\left[ \frac{N}{N^{(j+1)}} - \frac{N-j}{N^{(j+1)}} \right]\\
&= j!\,E\left[\frac{1}{N^{(j+1)}}\right].
\end{align*}
This gives the result.
\end{proof}

We can now prove Proposition \ref{firstprop}.

\begin{proof}[Proof of Proposition \ref{firstprop}]
First note that
\[\Q[f(\xi_T)|\F^0_T]\ind_{\{N_T\ge k\}} = \Q\bigg[ \sum_{u\in \Nc_T^{(k)}} \ind_{\{\xi_T = u\}} f(u)\, \bigg|\,\F^0_T\bigg] = \sum_{u\in \Nc_T^{(k)}} f(u) \Q(\xi_T = u | \F^0_T)\]
almost surely. Applying Lemma \ref{Quniform}, we get
\[\Q[f(\xi_T)|\F^0_T]\ind_{\{N_T\ge k\}} = \frac{1}{N_T^{(k)}}\sum_{u\in \Nc_T^{(k)}} f(u)\]
almost surely (where we take the right-hand side to be zero if $N_T < k$). Taking $\P$-expectations,
\[\P\Big[\frac{1}{N_T^{(k)}}\sum_{u\in \Nc_T^{(k)}} f(u)\Big] = \P\big[\Q[f(\xi_T)|\F^0_T]\ind_{\{N_T\ge k\}}\big].\]
Applying \eqref{RadNik0} and recalling that under $\Q$ there are at least $k$ particles alive at time $T$ almost surely,
\begin{equation}\label{subfirstprop}
\P\Big[\frac{1}{N_T^{(k)}}\sum_{u\in \Nc_T^{(k)}} f(u)\Big] = \Q\Big[\frac{1}{Z_{k,T}}\Q[f(\xi_T)|\F^0_T]\Big] = \Q\Big[\frac{1}{Z_{k,T}}f(\xi_T)\Big] = \P[N_T^{(k)}]\Q\Big[\frac{1}{N_T^{(k)}}f(\xi_T)\Big].
\end{equation}
Dividing through by $\P(N_T\ge k)$ and using the Tower property of conditional expectation to apply Lemma \ref{recip}  gives the result.
\end{proof}

\subsection{Campbell's formula}

One of the key elements that we need to carry out our calculations will be a version of Campbell's formula. Let $\tilde N_t$ be the number of \emph{ordinary} particles alive at time $t$---that is, they are not spines, and did not split from spines at spine splitting events. Recall that we also defined $n_t$ to be the number of distinct spines alive at time $t$.

We write $F(\theta,t) = \P[\theta^{N_t}]$ and $u(\theta) = \P[\theta^L]-\theta$. These functions satisfy the Kolmogorov forwards and backwards equations
\begin{equation}\label{kolfor}
\frac{\partial}{\partial t} F(\theta,t) = ru(\theta)\frac{\partial}{\partial\theta} F(\theta,t)
\end{equation}
and
\begin{equation}\label{kolback}
\frac{\partial}{\partial t} F(\theta,t) = ru(F(\theta,t));
\end{equation}
see \cite[Chapter III, Section 3]{athreya_ney:branching_processes}. Our main aim is to show the following.

\begin{prop}\label{Qmgf_gen_prop}
For any $z\ge 0$,
\[\Q^{k,T}[e^{-z\tilde N_T}|\G^k_T] = \prod_{i=0}^{k-1}\Big(e^{-r(m-1)(T-\psi_i)}\frac{u(F(e^{-z},T-\psi_i))}{u(e^{-z})}\Big)\]
$\Q^{k,T}$-almost surely.
\end{prop}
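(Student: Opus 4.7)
My plan is to compute $\Q^{k,T}[e^{-z\tilde N_T}|\G^k_T]$ directly via Campbell's formula applied to the Poisson point process of off-spine immigrations under $\Q^{k,T}$, and then to identify the resulting integral with the claimed product using the Kolmogorov backward equation.

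The first step is to describe, conditional on $\G^k_T$, how the ordinary particles arise under $\Q^{k,T}$. Combining Lemmas \ref{Qbots} and \ref{sizebiased} with the symmetry lemma (Lemma \ref{symm}) applied iteratively across spine split events, each of the $n_t$ distinct spines alive at time $t$ independently gives birth to non-spine offspring at rate $mr$ in between splits, and at each such off-spine birth the total offspring count is the size-biased $\tilde L$ with $\P(\tilde L = j) = j p_j / m$. One of these children continues the spine and the remaining $\tilde L - 1$ children are ordinary founders; by \eqref{QgivenG}, each founder then initiates an independent standard $\P$-Galton-Watson subtree, contributing (in distribution) $N_{T-t}$ particles to $\tilde N_T$. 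The $L - n$ non-marked siblings produced at a spine split event are residue rather than ordinary, and by definition make no contribution to $\tilde N_T$.

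Given this compound Poisson description, Campbell's formula yields
\begin{equation*}
\Q^{k,T}[e^{-z\tilde N_T}|\G^k_T] = \exp\left(\int_0^T n_t \cdot mr\left(\P[F(e^{-z},T-t)^{\tilde L-1}] - 1\right) dt\right).
\end{equation*}
Using $\P[\theta^{\tilde L-1}] = \tfrac{1}{m}\sum_j j p_j \theta^{j-1} = \tfrac{u'(\theta)+1}{m}$, the integrand simplifies to $r n_t\bigl(u'(F(e^{-z},T-t)) - (m-1)\bigr)$. The crucial algebraic step is then to note that $n_t = \sum_{i=0}^{k-1}\ind_{\{\psi_i \le t\}}$, so the integral decomposes as $\sum_{i=0}^{k-1}\int_{\psi_i}^T (\cdots)\,dt$ and the exponential factorises into a product of $k$ terms. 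Substituting $s = T-t$ in each factor, the $i$th factor becomes $y(T-\psi_i)$ where $y(s):= \exp\bigl(r \int_0^s(u'(F(e^{-z},v))-(m-1))\,dv\bigr)$.

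To match with the target formula, I would verify that $y(s)$ equals $h(s) := e^{-r(m-1)s}u(F(e^{-z},s))/u(e^{-z})$. Both satisfy $y(0)=h(0)=1$, and differentiating $h$ using the Kolmogorov backward equation \eqref{kolback} $\partial_s F = ru(F)$ gives $h'(s) = r\bigl(u'(F(e^{-z},s)) - (m-1)\bigr)h(s)$, the same ODE that $y$ satisfies by the fundamental theorem of calculus; uniqueness of ODE solutions then yields $y = h$. The main technical hurdle lies in the first step: justifying that, conditional on the \emph{entire} skeleton $\G^k_T$ (which encodes arbitrarily distant future split times), the off-spine immigrations along each earlier spine segment retain their rate-$mr$ Poisson, size-biased offspring structure. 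Intuitively this is clear---conditioning on a later split should not bias preceding independent Poisson events---but rigorously it requires a careful iterative application of the symmetry lemma at each split event, invoking Lemmas \ref{Qbots} and \ref{sizebiased} for each resulting spine segment separately.
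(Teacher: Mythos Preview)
Your proposal is correct and follows essentially the same route as the paper: compound Poisson (Campbell) description of off-spine births at rate $mr$ with size-biased offspring, the simplification $\sum_j j p_j \theta^{j-1} = u'(\theta)+1$, and decomposition of $\int_0^T n_t(\cdots)\,dt$ via $n_t = \sum_{i=0}^{k-1}\ind_{\{\psi_i\le t\}}$ into a product over spine segments. The only cosmetic difference is in the final identification: the paper evaluates $r\int_0^{T-\psi_i} u'(F(e^{-z},s))\,ds$ directly by substituting $t = F(e^{-z},s)$ (so $dt = ru(t)\,ds$) to obtain $\log\bigl(u(F(e^{-z},T-\psi_i))/u(e^{-z})\bigr)$, whereas you verify the same equality by checking that $y$ and $h$ solve the same first-order ODE with the same initial condition---these are of course two phrasings of the same computation.
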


Notice in particular that the right-hand side depends only on the values of the split times $\psi_1,\dots,\psi_{k-1}$ of the spines, not any of the other information in $\G^k_T$ (for example the topological information about the tree). This---used in conjunction with Proposition \ref{firstprop}---is a large part of the reason that the split times of our $k$ uniformly chosen particles are (asymptotically) independent of the topological information in the induced tree.

The main step in proving Proposition \ref{Qmgf_gen_prop} comes from the next lemma.

\begin{lem}\label{Qmgf_gen}
For any $z\ge 0$,
\[\Q[e^{-z\tilde N_T}|\G^k_T] = \prod_{i=0}^{k-1} \exp\Big( -r(m-1)(T-\psi_i) + r \int_0^{T-\psi_i} u'(\P[e^{-zN_{s}}])\, \d s\Big).\]
$\Q^{k,T}$-almost surely.
\end{lem}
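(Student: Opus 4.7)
The plan is to compute $\Q^{k,T}[e^{-z\tilde N_T}|\G^k_T]$ by first identifying, under $\Q^{k,T}$ conditional on $\G^k_T$, the joint law of all non-splitting births off the spines, and then applying Campbell's formula for marked Poisson processes along each spine segment.

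First, I would establish the following structural description of $\Q^{k,T}$ given $\G^k_T$: on each maximal spine segment (i.e., the lifespan of a single spine particle between splitting events, or from a splitting event to time $T$), the non-splitting branchings form an independent Poisson point process of rate $mr$, the offspring number $L$ at each non-splitting branching has the size-biased law $jp_j/m$, one of the $L$ children inherits all the marks on the parent and continues the spine, and the remaining $L-1$ children each root an independent standard Galton--Watson subtree. This description combines Lemmas \ref{Qbots}, \ref{sizebiased}, and \ref{symm} with \eqref{QgivenG}; the cleanest verification is a direct computation of $\P^k[g_{k,T}\ind_H]$ for a history $H$ consisting of $n$ non-splitting branchings at specified times $s_1<\dots<s_n$ with offspring counts $l_1,\dots,l_n$, followed by a splitting event matching the $\G^k_T$ data. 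The factor $l_j^{1-k}$ (probability all $k$ marks go to one specific child) cancels the factor $l_j^k$ coming from the $k$-fold spine weighting, leaving a clean Poisson-point-process density for the non-splitting portion that decouples from the Corollary \ref{Qdesccor}-type contribution of the split.

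Given this structure, the MGF contribution of a single spine segment on $[a,b]$ to $\tilde N_T$ is obtained via Campbell's formula. At each non-splitting birth at time $s$, the $L-1$ ordinary children initiate independent copies of the $\P$-tree and so contribute to $\tilde N_T$ with conditional MGF $F(e^{-z},T-s)^{L-1}$; averaging over the size-biased law of $L$ gives $\sum_{j\geq 1}\tfrac{jp_j}{m}F(e^{-z},T-s)^{j-1}$. Campbell's exponential formula then yields the segment contribution $\exp\bigl(\int_a^b mr\bigl(\sum_{j\geq 1}\tfrac{jp_j}{m}F(e^{-z},T-s)^{j-1}-1\bigr)ds\bigr)$, and using $\sum_j jp_j\theta^{j-1}=u'(\theta)+1$ simplifies the integrand to $ru'(F(e^{-z},T-s))-r(m-1)$. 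Different spine segments are conditionally independent by Lemma \ref{symm}, so multiplying over segments and using $\sum_v\ind_{[a_v,b_v]}(s)=n_s$ gives
\[\Q^{k,T}[e^{-z\tilde N_T}|\G^k_T]=\exp\Bigl(\int_0^T\bigl(ru'(F(e^{-z},T-s))-r(m-1)\bigr)n_s\,ds\Bigr).\]

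Finally, the identity $n_s=\sum_{i=0}^{k-1}\ind_{\{\psi_i\leq s\}}$, with $\psi_0:=0$, follows from the definition of $\psi_i$ as the first time the number of spines exceeds $i$ (which correctly handles simultaneous splits, since an increase from $a$ to $b$ at one time forces $\psi_a=\psi_{a+1}=\dots=\psi_{b-1}$). Swapping the sum and integral and substituting $u=T-s$ factorises the exponential as $\prod_{i=0}^{k-1}\exp\bigl(-r(m-1)(T-\psi_i)+r\int_0^{T-\psi_i}u'(F(e^{-z},u))du\bigr)$, and recognising $F(e^{-z},u)=\P[e^{-zN_u}]$ yields the lemma. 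I expect the main obstacle to be rigorously justifying the Poisson point process description in the first step: conditioning on the exact values of the splitting times (rather than merely ``no split before $t$'' as in Lemma \ref{Qbots}) requires the explicit joint-density calculation sketched above to confirm that the non-splitting point process retains its marked Poisson structure.
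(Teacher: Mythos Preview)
Your proposal is correct and follows essentially the same route as the paper: both identify that, under $\Q^{k,T}$ given $\G^k_T$, births off the spines form a Poisson point process of intensity $rm\,n_s\,ds$ with size-biased offspring (combining Lemmas \ref{Qbots}, \ref{sizebiased}, \ref{symm} and \eqref{QgivenG}), apply Campbell's formula together with $\sum_j jp_j\theta^{j-1}=u'(\theta)+1$, and then decompose $\int_0^T(\cdots)n_s\,ds$ via the split times $\psi_i$ to obtain the product form. The concern you flag about conditioning on the exact split times is handled in the paper simply by invoking the symmetry lemma to piece together the segment-by-segment description, so no additional density computation is needed.
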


\begin{proof}
Let $\Lambda_T$ be the total number of birth events off the spines (i.e.~births along spines that are not spine splitting events) before time $T$. Recall (from Lemma \ref{Qbots} and the symmetry lemma) that under $\Q^{k,T}$ each spine gives birth to non-spine particles according to a Poisson process of rate $rm$, independently of everything else. Thus at any time $s\in[0,T]$, the total rate at which spine particles give birth to non-spine particles is $r m n_s$. Besides, such births are size biased (by Lemma \ref{sizebiased} and the symmetry lemma). Finally, once a particle is born off the spines, it generates a tree that behaves exactly as under $\P$ (see \eqref{QgivenG} and the discussion that follows).

Thus, letting $\lambda_T = \int_0^T n_s \d s$,
\[\Q[e^{-z\tilde N_T}|\G^k_T] = \sum_{j=0}^\infty \Q(\Lambda_T=j|\G^k_T)\Big(\int_0^T \sum_{i=1}^{\infty} \frac{i p_i}{m}\P[e^{-zN_{T-s}}]^{i-1} \frac{n_s}{\lambda_T} \d s\Big)^j.\]
Since $\Q(\Lambda_T=j|\G^k_T) = e^{-r m\lambda_T} (rm\lambda_T)^j/j!$, we get
\[\Q[e^{-z\tilde N_T}|\G^k_T] = e^{- r m\lambda_T} \sum_{j=0}^\infty \frac{1}{j!}\Big(r \int_0^T \sum_{i=1}^\infty i p_i\P[e^{-zN_{T-s}}]^{i-1} n_s \d s\Big)^j.\]
Note that
\[\sum_{i=1}^\infty i p_i \theta^{i-1} = \frac{\d}{\d\theta} \sum_{i=1}^\infty p_i\theta^i = u'(\theta)+1.\]
Therefore
\[\Q[e^{-z\tilde N_T}|\G^k_T] = \exp\Big(-r (m-1) \lambda_T + r\int_0^T u'(\P[e^{-zN_{T-s}}]) n_s \d s\Big).\]
Now, we know that between times $\psi_{i-1}$ and $\psi_i$ we have exactly $i$ distinct spine particles. Thus
\[\Q[e^{-z\tilde N_T}|\G^k_T] = \prod_{i=0}^{k-1} \exp\Big( -r(m-1)(T-\psi_i) + r \int_{\psi_i}^T u'(\P[e^{-zN_{T-s}}])\, \d s\Big)\]
and the result follows.
\end{proof}

\begin{proof}[Proof of Proposition \ref{Qmgf_gen_prop}]
Recalling \eqref{kolback} that $F(\theta,s)$ satisfies the backwards equation
\[\frac{\partial}{\partial s} F(\theta,s) = r u(F(\theta,s)),\]
by making the substitution $t=F(\theta,s)$ we see that
\[r\int_a^b u'(F(\theta,s))\d s = r\int_{F(\theta,a)}^{F(\theta,b)} \frac{u'(t)}{ru(t)} \d t = \log\Big(\frac{u(F(\theta,b))}{u(F(\theta,a))}\Big).\]
Applying this to Lemma \ref{Qmgf_gen}, we have
\[\Q[e^{-z\tilde N_T}|\G^k_T] = \prod_{i=0}^{k-1}\Big(e^{-r(m-1)(T-\psi_i)}\frac{u(F(e^{-z},T-\psi_i))}{u(F(e^{-z},0))}\Big).\]
Noting that $F(e^{-z},0) = e^{-z}$ gives the result.
\end{proof}

\section{Birth-death processes}\label{BDsec}

In this section we aim to prove the results from Section \ref{birthdeathresults}. Recall the setup: fix $a\ge 0$ and $b>0$, and suppose that $r = \alpha+\beta$, $p_0 = \alpha/(\alpha+\beta)$ and $p_2 = \beta/(\alpha+\beta)$, with $p_j=0$ for $j\neq 0,2$. This is known as a birth-death process with birth rate $\beta$ and death rate $\alpha$. Since all particles have either $0$ or $2$ children, and under $\Q$ the spines cannot have $0$ children, they must always have $2$ children. This simplifies the picture considerably.

\subsection{Elementary calculations with generating functions}\label{BDgenfuncts}

Suppose first that we are in the non-critical case $\alpha\neq\beta$. It is easy to calculate the moment generating function under $\P$ for a birth-death process (see \cite[Chapter III, Section 5]{athreya_ney:branching_processes}): for $\alpha\neq\beta$ and $\theta\in(0,1)$,
\[F(\theta,t) := \P[\theta^{N_t}] = \frac{\alpha(1-\theta)e^{(\beta-\alpha)t} + \beta \theta-\alpha}{\beta(1-\theta)e^{(\beta-\alpha)t} + \beta \theta-\alpha}.\]
We then see that
\[\P(N_t = 0) = \lim_{\theta\downarrow 0} F(0,t) = \frac{\alpha e^{(\beta-\alpha)t}-\alpha}{\beta e^{(\beta-\alpha)t}-\alpha}.\]
Writing
\[p_t = \P(N_t=0) = \frac{\alpha e^{(\beta-\alpha)t}-\alpha}{\beta e^{(\beta-\alpha)t}-\alpha}, \hspace{5mm} 1-p_t = \frac{(\beta-\alpha)e^{(\beta-\alpha)t}}{\beta e^{(\beta-\alpha)t}-\alpha}\]
and
\[q_t = \frac{\beta e^{(\beta-\alpha)t}-\beta}{\beta e^{(\beta-\alpha)t}-\alpha}, \hspace{5mm} 1-q_t = \frac{\beta-\alpha}{\beta e^{(\beta-\alpha)t}-\alpha},\]
we get
\[F(\theta,t) = p_t + (1-p_t)\frac{(1-q_t)\theta}{1-q_t\theta} = p_t + \frac{(1-p_t)(1-q_t)}{q_t}\Big(\frac{1}{1-q_t\theta}-1\Big).\]
From this we see that
\[F(\theta,t) = p_t + (1-p_t)(1-q_t)\sum_{j=1}^\infty \theta^j q_t^{j-1}\]
and
\[\frac{\partial^k F(\theta,t)}{\partial \theta^k} = \frac{(1-p_t)(1-q_t)}{q_t}\frac{q_t^k k!}{(1-q_t\theta)^{k+1}}.\]
Therefore
\[\P(N_t = j) = (1-p_t)(1-q_t)q_t^{j-1} \hspace{3mm} \hbox{ for } j\ge 1,\]
so
\[\P(N_t \ge k) = (1-p_t)(1-q_t)\sum_{j=k}^\infty q_t^{j-1} = (1-p_t)q_t^{k-1} = \frac{(\beta-\alpha)e^{(\beta-\alpha)t}\beta^{k-1}(e^{(\beta-\alpha)t}-1)^{k-1}}{(\beta e^{(\beta-\alpha)t} - \alpha)^k}.\]
Also, since $\P[N_t^{(k)}] = \lim_{\theta\uparrow 1}\frac{\partial^k F(\theta,t)}{\partial \theta^k}$,
\begin{equation}\label{Pmom}
\P[N_t^{(k)}] = \frac{(1-p_t)(1-q_t)}{q_t} \frac{q_t^k k!}{(1-q_t)^{k+1}} = k! \Big(\frac{\beta}{\beta-\alpha}\Big)^{k-1} e^{(\beta-\alpha)t}(e^{(\beta-\alpha)t}-1)^{k-1}.
\end{equation}
Thus
\begin{equation}\label{Pcond}
\frac{\P[N_t^{(k)}]}{\P(N_t\ge k)} = \frac{k!}{(\beta-\alpha)^k} (\beta e^{(\beta-\alpha)t}-\alpha)^k
\end{equation}
and
\begin{equation}\label{Pratio}
\frac{\P\big[N_{T-t}^{(k)}\big]}{\P\big[N_T^{(k)}\big]} = e^{-(\beta-\alpha)t}\Big(\frac{e^{(\beta-\alpha)(T-t)}-1}{e^{(\beta-\alpha)T}-1}\Big)^{k-1}.
\end{equation}
Finally, writing
\[F(\theta,t) = \frac{\alpha}{\beta} + \frac{(\beta-\alpha)\theta - \alpha(\beta-\alpha)/\beta}{\beta(1-\theta)e^{(\beta-\alpha)t} + \beta \theta-\alpha},\]
we see that
\begin{equation}\label{Fpartial}
\frac{\partial F(\theta,t)}{\partial t} = \frac{(\beta-\alpha)^2 (\beta \theta - \alpha)(1-\theta)e^{(\beta-\alpha)t}}{(\beta(1-\theta)e^{(\beta-\alpha)t} + \beta \theta - \alpha)^2}.
\end{equation}

In the critical case $\alpha=\beta$, similar calculations give
\begin{equation}\label{critBDpgf}
F(\theta,t) := \P[\theta^{N_t}] = \frac{(1-\theta)\beta t + \theta}{(1-\theta)\beta t + 1},
\end{equation}
\begin{equation}\label{critBDratio}
\P[N_t^{(k)}] = \lim_{\theta\uparrow 1} \frac{\partial^k F(\theta,t)}{\partial \theta^k} = k! (\beta t)^{k-1},
\end{equation}
\begin{equation}\label{critBDcond}
\frac{\P[N_t^{(k)}]}{\P(N_t\ge k)} = k! (\beta t+1)^k
\end{equation}
and
\begin{equation}\label{critBDpartialt}
\frac{\partial F(\theta,t)}{\partial t} = \frac{\partial}{\partial t} \Big(1+\frac{\theta-1}{(1-\theta)\beta t+1}\Big) = \frac{(1-\theta)^2\beta}{((1-\theta)\beta t+1)^2}.
\end{equation}

\subsection{Split time densities}

Recall that $\mathcal H'$ is the $\sigma$-algebra that contains information about which marks follow which spines, but does not know anything about the spine split times.

\begin{lem}\label{Qsplitdist}
Under $\Q^{k,T}$, the spine split times $\psi_1,\ldots,\psi_{k-1}$ are independent of $\mathcal H'$ and have a joint probability density function
\[f^\Q_k(s_1,\ldots,s_{k-1}) = \begin{cases} (k-1)! \Big(\frac{\beta-\alpha}{e^{(\beta-\alpha)T}-1}\Big)^{k-1} \prod_{i=1}^{k-1} e^{(\beta-\alpha)(T-s_i)} & \hbox{ if } \alpha\neq \beta\\
											 (k-1)!/T^{k-1} & \hbox{ if } \alpha = \beta\end{cases}.\]
\end{lem}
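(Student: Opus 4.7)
My plan is to induct on $k$, using Lemma \ref{Qbots} for the marginal law of $\psi_1$ and the symmetry lemma (Lemma \ref{symm}) to decompose the system after the first spine split. Write $E(u) := e^{(\beta-\alpha)u}-1$ for brevity. Under $\Q$ no spine can die, so in a birth-death process every spine branching produces exactly two offspring, and the first spine split at time $\psi_1 = s_1$ partitions the $k$ marks into two non-empty groups of sizes $a$ and $k-a$.

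The base case $k=2$ is immediate: combining Lemma \ref{Qbots} with \eqref{Pratio} and the identity $(m-1)r = \beta-\alpha$ (valid since $r=\alpha+\beta$ and $m=2\beta/(\alpha+\beta)$) yields $\Q^{2,T}(\psi_1 > t) = E(T-t)/E(T)$; differentiating in $t$ gives precisely the claimed density for $k=2$.

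For the inductive step, Lemma \ref{symm} tells us that conditional on $\psi_1 = s_1$ and on the split sizes $(a,k-a)$, the two subtrees evolve independently as under $\Q^{a,T-s_1}$ and $\Q^{k-a,T-s_1}$, and the remaining $k-2$ split times $\psi_2 < \cdots < \psi_{k-1}$ are the sorted merge of the $a-1$ splits from one subtree and the $k-a-1$ splits from the other. By the inductive hypothesis, each subtree's split-time density on its own (ordered) simplex is $(j-1)!\bigl((\beta-\alpha)/E(T-s_1)\bigr)^{j-1}\prod e^{(\beta-\alpha)(T-\cdot)}$, independent of the subtree's topology. Crucially, both subtree densities depend on their time arguments only through the symmetric factor $\prod e^{(\beta-\alpha)(T-\cdot)}$, so the density of the sorted merge at $(t_2,\ldots,t_{k-1})$ is obtained by summing over the $\binom{k-2}{a-1}$ ways to assign the $k-2$ later split times to the two subtrees, and each term contributes the same value. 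The identity $\binom{k-2}{a-1}(a-1)!(k-a-1)! = (k-2)!$ then yields a conditional density $(k-2)!\bigl((\beta-\alpha)/E(T-s_1)\bigr)^{k-2}\prod_{i=2}^{k-1}e^{(\beta-\alpha)(T-t_i)}$ that is independent of $a$ and hence of the first partition. Multiplying by the marginal density of $\psi_1$, namely $(k-1)(\beta-\alpha)E(T-s_1)^{k-2}e^{(\beta-\alpha)(T-s_1)}/E(T)^{k-1}$ obtained by differentiating the tail in Lemma \ref{Qbots}, the $E(T-s_1)^{k-2}$ factors cancel and the product reduces to exactly $(k-1)!\bigl((\beta-\alpha)/E(T)\bigr)^{k-1}\prod_{i=1}^{k-1}e^{(\beta-\alpha)(T-s_i)}$.

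Independence from $\mathcal H'$ falls out of the same calculation: the conditional density of $(\psi_2,\ldots,\psi_{k-1})$ given $\psi_1$ does not depend on the first partition sizes $(a,k-a)$, while by induction each subtree's split times are independent of its own topology. The critical case $\alpha=\beta$ is handled by the same induction after replacing \eqref{Pratio} with \eqref{critBDratio}, so that $E(u)$ becomes $u$ and the exponential factors trivialise; alternatively one may take the limit $\alpha \to \beta$ in the non-critical formula. I do not anticipate any substantial obstacle beyond the combinatorial bookkeeping in the inductive step.
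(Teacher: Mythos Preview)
Your density computation by induction on $k$ is correct and yields exactly the claimed formula. It is, however, a genuinely different decomposition from the paper's. The paper does not induct on $k$: instead it conditions on each successive split time and computes, for every $i$, the conditional tail
\[
\Q^{k,T}\bigl(\psi_i>s_i \,\big|\, \psi_{i-1}=s_{i-1},\, A_i\bigr)
=\prod_{j=1}^{i}\Q^{a_j,T-s_{i-1}}\bigl(\psi_1>s_i-s_{i-1}\bigr)
=\Bigl(\tfrac{E(T-s_i)}{E(T-s_{i-1})}\Bigr)^{k-i},
\]
where $A_i$ records the block sizes $(a_1,\ldots,a_i)$ just after the $(i-1)$st split. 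The exponent collapses via $\sum_j(a_j-1)=k-i$, so the answer is visibly independent of $A_i$; multiplying over $i$ the product telescopes. This gives both the density and the independence from $\mathcal H'$ in one stroke, with no merging combinatorics. Your recursive decomposition is more structural and perhaps more portable, but it trades that single telescoping computation for the interleaving bookkeeping you mention.

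There is one genuine gap in your independence argument. The $\sigma$-algebra $\mathcal H'=\sigma(Z_0,\ldots,Z_{k-1})$ encodes not only the first partition $(a,k-a)$ and the two subtree topologies, but also the \emph{interleaving}: for each $i\ge 2$, whether the split at time $\psi_i$ occurs in the first or the second subtree. Showing that the \emph{marginal} density of $(\psi_2,\ldots,\psi_{k-1})$ given $\psi_1$ and $(a,k-a)$ is independent of $a$ (after summing over all $\binom{k-2}{a-1}$ interleavings) is not the same as showing independence from the interleaving itself. The fix is easy with your ingredients: since both subtrees have the same horizon $T-s_1$ and the inductive densities are pure products $\prod e^{(\beta-\alpha)(T-\cdot)}$, the joint density of $(\psi_2,\ldots,\psi_{k-1})$ together with a \emph{specific} interleaving $I_1$ equals
\[
(a-1)!\,(k-a-1)!\Bigl(\tfrac{\beta-\alpha}{E(T-s_1)}\Bigr)^{k-2}\prod_{i=2}^{k-1}e^{(\beta-\alpha)(T-t_i)},
\]
which is the same for every $I_1$ of size $a-1$. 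Dividing by $\Q(I_1\mid \psi_1,P_1)=1/\binom{k-2}{a-1}$ (each interleaving equally likely by exchangeability) recovers the $(k-2)!$ density, now manifestly independent of the full topology. You should also note that $\Q(P_1\mid \psi_1=s_1)$ is independent of $s_1$ (the computation in the later Lemma~\ref{QtopologyBD} does not use Lemma~\ref{Qsplitdist} and can be done here directly), and carry ``the law of $\mathcal H'$ under $\Q^{j,T'}$ does not depend on $T'$'' through the induction, since you implicitly use it when invoking the subtree topology probabilities $\Q^{a,T-s_1}(\mathcal H'_1)$.
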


\begin{proof}
We do the calculation in the non-critical case $\alpha\neq \beta$. The proof in the critical case is identical.

Recall from Lemma \ref{Qbots} that
\[\Q^{k,T}(\psi_1 > s_1) = \frac{\P[N_{T-s_1}^{(k)}]}{\P[N_{T}^{(k)}]}e^{(m-1)rs_1} = \frac{\P[N_{T-s_1}^{(k)}]}{\P[N_{T}^{(k)}]}e^{(\beta-\alpha)s_1}.\]
Then \eqref{Pratio} gives
\[\Q^{k,T}(\psi_1 > s_1) = e^{-(\beta-\alpha)s_1}\Big(\frac{e^{(\beta-\alpha)(T-s_1)}-1}{e^{(\beta-\alpha)T}-1}\Big)^{k-1}e^{(\beta-\alpha)s_1} = \Big(\frac{e^{(\beta-\alpha)(T-s_1)}-1}{e^{(\beta-\alpha)T}-1}\Big)^{k-1},\]
so $\psi_1$ has density
\[(k-1)(\beta-\alpha)e^{(\beta-\alpha)(T-s_1)}\frac{(e^{(\beta-\alpha)(T-s_1)}-1)^{k-2}}{(e^{(\beta-\alpha)T}-1)^{k-1}}.\]
For $i=2,\ldots, k-1$, between times $\psi_{i-1}$ and $\psi_i$ we have exactly $i$ particles carrying marks. Let $A_i$ be the event that the first of these is carrying $a_1$ marks, the second $a_2$, and so on. Let $\psi^{(j)}_i$ be the time at which the marks following the $j$th of these particles split. By the symmetry lemma, given $\psi_{i-1} = s_{i-1}$ (where we take $s_0=0$), these times are independent with
\[\Q^{k,T}(\psi^{(j)}_i > s_i | \psi_{i-1} = s_{i-1},\, A_i) = \Q^{a_j,T-s_{i-1}}(\psi_1 > s_i-s_{i-1}) = \Big(\frac{e^{(\beta-\alpha)(T-s_i)}-1}{e^{(\beta-\alpha)(T-s_{i-1})}-1}\Big)^{a_j-1}.\]
Then, since the event $\{\psi_i > s_i\} = \bigcap_{j}\{\psi_i^{(j)} > s_i\}$,
\[\Q^{k,T}(\psi_i > s_i | \psi_{i-1} = s_{i-1},\, A_i) = \prod_{j=1}^i \Big(\frac{e^{(\beta-\alpha)(T-s_i)}-1}{e^{(\beta-\alpha)(T-s_{i-1})}-1}\Big)^{a_j-1}.\]
Since $\sum_{j=1}^i (a_j-1) = k-i$, we get
\[\Q^{k,T}(\psi_i > s_i | \psi_{i-1} = s_{i-1},\, A_i) = \Big(\frac{e^{(\beta-\alpha)(T-s_i)}-1}{e^{(\beta-\alpha)(T-s_{i-1})}-1}\Big)^{k-i}.\]
This does not depend on $a_1,\ldots, a_i$, so $\psi_i$ is independent of $\mathcal H'$, and summing over the possible values we obtain
\[\Q^{k,T}(\psi_i > s_i \, |\, \psi_{i-1} = s_{i-1}) = \Big(\frac{e^{(\beta-\alpha)(T-s_i)}-1}{e^{(\beta-\alpha)(T-s_{i-1})}-1}\Big)^{k-i}.\]
Differentiating gives
\[f^\Q_k(s_1,\ldots,s_{k-1}) = (k-1)! (\beta-\alpha)^{k-1} \prod_{i=1}^{k-1} e^{(\beta-\alpha)(T-s_i)}\frac{(e^{(\beta-\alpha)(T-s_i)}-1)^{k-i-1}}{(e^{(\beta-\alpha)(T-s_{i-1})}-1)^{k-i}}.\]
The product telescopes to give the answer.
\end{proof}

\begin{prop}\label{BDdensity}
Let $s_0=0$. The vector $(\S^k_1(T),\ldots,\S^k_{k-1}(T))$ of ordered split times under $\P$ is independent of $\mathcal H$ and has a joint density $f_k^T(s_1,\ldots, s_{k-1})$ equalling
\[\frac{k!(\beta e^{(\beta-\alpha)T}-\alpha)^k (\beta-\alpha)^{2k-1}}{(e^{(\beta-\alpha)T}-1)^{k-1} e^{(\beta-\alpha)T}} \int_0^1 (1-y)^{k-1} \prod_{j=0}^{k-1} \frac{e^{(\beta-\alpha)(T-s_j)}}{(\beta(1-y)e^{(\beta-\alpha)(T-s_j)} + \beta y - \alpha)^2} \d y\]
if $\alpha\neq\beta$, and
\[\frac{k!(\beta T+1)^k}{T^{k-1}} \int_0^1 (1-y)^{k-1} \prod_{j=0}^{k-1} \frac{1}{(\beta(1-y)(T-s_j) + 1)^2} \d y\]
if $\alpha=\beta$.
\end{prop}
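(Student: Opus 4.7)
The plan is to combine three ingredients already developed: Proposition \ref{firstprop} (which reformulates expectations over uniformly-chosen tuples as $\Q^{k,T}$-expectations over spines), Proposition \ref{Qmgf_gen_prop} (which gives the MGF of $\tilde N_T$ conditional on $\mathcal G^k_T$), and Lemma \ref{Qsplitdist} (the joint density of spine split times under $\Q^{k,T}$). Concretely, for a joint event of the form $A = \{\S^k_1 \in ds_1,\ldots,\S^k_{k-1}\in ds_{k-1},\,\mathcal H = h\}$, apply Proposition \ref{firstprop} with $f(u)$ the indicator of the analogous event for the ordered $k$-tuple $u$. Under the correspondence ``spines under $\Q^{k,T}$ $\leftrightarrow$ uniform sample under $\P$'', $f(\xi_T)$ becomes $\ind_{\{\psi \in ds,\, \mathcal Z = h\}}$, yielding
\[\P(A \mid N_T\ge k) = \frac{\P[N_T^{(k)}]}{\P(N_T\ge k)(k-1)!}\int_0^\infty (e^z-1)^{k-1}\Q^{k,T}\big[e^{-zN_T}\ind_{\{\psi\in ds,\,\mathcal Z = h\}}\big]\,\d z.\]

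Next, I would condition the inner $\Q^{k,T}$-expectation on $\mathcal G^k_T$. In the birth-death setting the branching is binary, so at each spine split \emph{both} children inherit at least one mark and no residue particles are ever produced; hence $N_T = \tilde N_T + k$ identically under $\Q^{k,T}$. Proposition \ref{Qmgf_gen_prop} then gives
\[\Q^{k,T}[e^{-zN_T}\mid \mathcal G^k_T] = e^{-zk}\prod_{i=0}^{k-1} e^{-r(m-1)(T-\psi_i)}\frac{u(F(e^{-z},T-\psi_i))}{u(e^{-z})},\]
which crucially depends on $\mathcal G^k_T$ only through the split times $\psi_1,\ldots,\psi_{k-1}$. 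Combined with the independence $\psi \perp \mathcal H'$ under $\Q^{k,T}$ from Lemma \ref{Qsplitdist}, the joint probability factors as a product of a function of $s$ and a function of $h$; integrating out the topology yields both the independence claim (ordered split times independent of $\mathcal H$) and the identity
\[f_k^T(s) = \frac{\P[N_T^{(k)}]}{\P(N_T\ge k)(k-1)!}\,f^\Q_k(s)\int_0^\infty (e^z-1)^{k-1}\Phi(s;z)\,\d z,\]
with $\Phi(s;z) = e^{-zk}\prod_{i=0}^{k-1} e^{-r(m-1)(T-s_i)} u(F(e^{-z},T-s_i))/u(e^{-z})$.

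The remaining step is algebraic. Substituting $y = e^{-z}$ turns $(e^z-1)^{k-1}e^{-zk}\,\d z$ into $(1-y)^{k-1}\,\d y$ on $[0,1]$. For the non-critical case, using $u(\theta) = (\theta-1)(\beta\theta-\alpha)/(\alpha+\beta)$, $r(m-1)=\beta-\alpha$, and the explicit form of $F$ from Section \ref{BDgenfuncts}, a direct calculation gives
\[e^{-r(m-1)(T-s_i)}\frac{u(F(y,T-s_i))}{u(y)} = \frac{(\beta-\alpha)^2}{\big(\beta(1-y)e^{(\beta-\alpha)(T-s_i)}+\beta y - \alpha\big)^2}.\]
In the critical case $\alpha=\beta$ the same manipulation, using $u(\theta) = (\theta-1)^2/2$ and $r(m-1)=0$, yields the factor $1/\big(\beta(1-y)(T-s_i)+1\big)^2$. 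Inserting these together with $f^\Q_k$ from Lemma \ref{Qsplitdist} and the ratio $\P[N_T^{(k)}]/\P(N_T\ge k)$ from \eqref{Pcond} (respectively \eqref{critBDcond}) and simplifying using $s_0=0$ produces the stated formulas. The main obstacle is purely bookkeeping: tracking the powers of $\beta-\alpha$ and the telescoping exponentials when combining $f^\Q_k$ with the integrand, and verifying that the factor $\prod_{j=0}^{k-1} e^{(\beta-\alpha)(T-s_j)}/e^{(\beta-\alpha)T}$ in the proposition matches $\prod_{i=1}^{k-1} e^{(\beta-\alpha)(T-s_i)}$ arising from the $\Q$-density (using $s_0=0$).
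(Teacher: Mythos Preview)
Your proposal is correct and follows essentially the same route as the paper's proof: invoke Proposition~\ref{firstprop}, use the binary-branching observation that $N_T=\tilde N_T+k$ under $\Q^{k,T}$, apply Proposition~\ref{Qmgf_gen_prop} together with Lemma~\ref{Qsplitdist} (both for independence and for the $\Q$-density), substitute $y=e^{-z}$, and insert \eqref{Pcond}/\eqref{critBDcond}. The only cosmetic difference is that you simplify $e^{-r(m-1)(T-s_i)}u(F(y,T-s_i))/u(y)$ directly from the factorisation $u(\theta)=(\theta-1)(\beta\theta-\alpha)/(\alpha+\beta)$, whereas the paper reaches the same expression via the backward equation~\eqref{kolback} and the precomputed derivative~\eqref{Fpartial}; both are equivalent one-line algebraic identities.
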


\begin{proof}
Again we give the proof in the non-critical case $\alpha\neq\beta$. The critical case is identical. We start with Proposition \ref{firstprop}, which tells us that for any measurable functional $F$,
\begin{equation}\label{firstpropre}
\P\Big[\frac{1}{N_T^{(k)}} \sum_{u\in \Nc_T^{(k)}} F(u) \, \Big| \, N_T \ge k\Big] = \frac{\P[N_T^{(k)}]}{\P(N_T\ge k)(k-1)!} \int_0^\infty (e^z-1)^{k-1} \Q^{k,T}\Big[e^{-zN_T} F(\xi_T)\Big]\,\d z.
\end{equation}
The independence of the spine split times and $\mathcal H'$ under $\Q^{k,T}$ (established in Lemma \ref{Qsplitdist}), together with \eqref{firstpropre} and Proposition \ref{Qmgf_gen_prop}, imply that the split times under $\P$ are independent of $\mathcal H$.

Returning to \eqref{firstpropre} again, we get that in particular
\begin{multline*}
f_k^T(s_1,\ldots, s_{k-1})\\
= \frac{\P[N_T^{(k)}]}{\P(N_T\ge k)(k-1)!} \hspace{-0.5mm}\int_0^\infty (e^z-1)^{k-1} f^\Q_k(s_1,\ldots,s_{k-1})\Q[e^{-zN_T}|\psi_1 = s_1,\ldots,\psi_{k-1}=s_{k-1}] \,\d z.
\end{multline*}
However we also know from Proposition \ref{Qmgf_gen_prop} that
\[\Q[e^{-z\tilde N_T}|\psi_1 = s_1,\ldots,\psi_{k-1}=s_{k-1}] = \prod_{i=0}^{k-1}\Big(e^{-r(m-1)(T-s_i)}\frac{u(F(e^{-z},T-s_i))}{u(e^{-z})}\Big)\]
where $s_0=0$, $F(\theta,t) = \P[\theta^{N_t}]$ and $u(\theta) = \P[\theta^L]-\theta$.
Of course since all births are binary, all particles are either spines or ordinary; so since there are $k$ spines at time $T$ almost surely under $\Q$, $N_T = \tilde N_T + k$. Thus, by \eqref{kolback} and \eqref{Fpartial},
\[\Q[e^{-zN_T}|\psi_1 = s_1,\ldots,\psi_{k-1}=s_{k-1}] = e^{-zk} \prod_{i=0}^{k-1} \Big( \frac{\beta-\alpha}{\beta(1-e^{-z})e^{(\beta-\alpha)(T-s_i)} + \beta e^{-z}-\alpha}\Big)^2.\]
Plugging this into our formula for $f_k^T(s_1,\ldots, s_{k-1})$ above gives
\begin{multline*}
f_k^T(s_1,\ldots, s_{k-1}) = \frac{\P[N_T^{(k)}]}{\P(N_T\ge k)(k-1)!} \int_0^\infty e^{-z}(1-e^{-z})^{k-1} f^\Q_k(s_1,\ldots,s_{k-1})\\
\cdot\prod_{i=0}^{k-1} \frac{(\beta-\alpha)^2}{(\beta(1-e^{-z})e^{(\beta-\alpha)(T-s_i)}+\beta e^{-z} - \alpha)^2}\,\d z.
\end{multline*}
By \eqref{Pcond} and Lemma \ref{Qsplitdist}, this becomes
\[\frac{k!(\beta e^{(\beta-\alpha)T}-\alpha)^k(\beta-\alpha)^{2k-1}}{e^{(\beta-\alpha)T}(e^{(\beta-\alpha)T}-1)^{k-1}}\hspace{-1.5mm} \int_0^\infty \hspace{-1.8mm} e^{-z}(1-e^{-z})^{k-1} \hspace{-0.5mm}\prod_{i=0}^{k-1}\hspace{-0.5mm}  \frac{e^{(\beta-\alpha)(T-s_i)}}{(\beta(1-e^{-z})e^{(\beta-\alpha)(T-s_i)}+\beta e^{-z} - \alpha)^2}\d y.\]
Making the substitution $y=e^{-z}$ gives the result.
\end{proof}

\subsection{Describing the partition process}

We recall now the partition $Z_0,Z_1,\ldots$ which contained the information about the marks following each of the distinct spine particles, without the information about the split times.

\begin{lem}\label{QtopologyBD}
The partition $Z_0,Z_1,\ldots$ has the following distribution under $\Q^{k,T}_T$:
\begin{itemize}
\item If $Z_i$ consists of $i+1$ blocks of sizes $a_1,\ldots,a_{i+1}$, then the $j$th block will split next with probability $\frac{a_j-1}{k-i-1}$ for each $j=1,\ldots,i+1$.
\item When a block of size $a$ splits, it splits into two new blocks, and the probability that these blocks have sizes $l$ and $a-l$ is $\frac{1}{a-1}$ for each $l=1,\ldots,a-1$.
\end{itemize}
\end{lem}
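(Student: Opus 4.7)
The plan is to prove the two bullets separately; in each case the symmetry lemma (Lemma~\ref{symm}) reduces the statement to analysing the first spine-splitting event $\psi_1$ in an isolated subtree of size $a$.

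For the second bullet, I would apply Lemma~\ref{Qdescription} with $l=2$ and $n=2$ (in a binary birth-death setting under $\Q^{k,T}$, a spine particle's branching event can only produce exactly two offspring, since $p_1 = 0$ and the spine cannot follow a dead line). Plugging the moment formula~\eqref{Pmom} (or~\eqref{critBDratio} in the critical case) into Lemma~\ref{Qdescription} shows that the ratio $\P[N_{T-t}^{(a_1)}]\P[N_{T-t}^{(a_2)}]/\P[N_{T-t}^{(a)}]$ reduces to $(a_1!\,a_2!/a!)$ times a function of $t$ alone; integrating out $t$, the probability of a specific unordered partition $P$ with block sizes $a_1,a_2$ becomes
\[
\Q^{a,T-\psi_{i}}(\mathcal{P}^\xi_{\psi_1} = P) \;=\; \frac{2\,a_1!\,a_2!}{a!(a-1)}.
\]
For the ``(ordered) sizes $(l,a-l)$,'' i.e.\ the pair of sizes assigned to the two labelled offspring of the branching particle, each unordered partition with block sizes $l,a-l$ corresponds to two ordered assignments (swapping the roles of the children), so summing $\frac{l!(a-l)!}{a!(a-1)}$ over the $\binom{a}{l}$ size-$l$ subsets which could follow the first offspring gives $\frac{1}{a-1}$, uniformly in $l\in\{1,\ldots,a-1\}$.

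For the first bullet, I would again invoke the symmetry lemma: conditional on the configuration at $\psi_i$, the $i+1$ subtrees rooted at the distinct spine particles evolve independently, with the subtree carrying $a_j$ marks behaving as $\Q^{a_j,T-\psi_i}$. Writing $\tau_j$ for the first further split time within block $j$ (measured from $\psi_i$), Lemma~\ref{Qbots} together with~\eqref{Pmom} (respectively~\eqref{critBDratio}) yields
\[
\Q(\tau_j > s) \;=\; g(s)^{a_j-1},
\]
where $g$ is a decreasing bijection from $[0,T-\psi_i]$ onto $[0,1]$ that \emph{does not depend on $j$}. Setting $U_j := g(\tau_j)$ therefore gives independent random variables $U_1,\ldots,U_{i+1}$ with CDFs $u\mapsto u^{a_j-1}$ on $[0,1]$, and block $j$ splits first precisely when $U_j = \max_l U_l$. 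Since $\sum_l(a_l-1) = k-i-1$, a direct integration gives
\[
\Q(U_j = \max_l U_l) \;=\; \int_0^1 (a_j-1)\,u^{k-i-2}\,du \;=\; \frac{a_j-1}{k-i-1}.
\]

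The main technical subtlety is the combinatorial bookkeeping in the second bullet---translating from the unordered-partition framework of Lemma~\ref{Qdescription} to the labelled-offspring interpretation of ``ordered sizes'' used in the statement of the lemma. Everything else is a straightforward application of the symmetry lemma together with the birth-death moment identities developed in Section~\ref{BDgenfuncts}.
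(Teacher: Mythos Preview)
Your argument is correct. For the first bullet, your approach is essentially identical to the paper's: both reduce via the symmetry lemma to independent first-split times in the subtrees and use Lemma~\ref{Qbots} together with the moment formula \eqref{Pmom} (or \eqref{critBDratio}). Your change of variables $U_j=g(\tau_j)$ is a clean repackaging of what the paper does by observing that the integrand factors as $(a_j-1)$ times something independent of $j$ and then using that the sum over $j$ must be $1$.

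For the second bullet you take a genuinely different route. The paper goes back to the definition of $\Q^{k,T}$ and computes $\Q^{k,T}(\rho^1_t=i\mid\tau_\emptyset=t)$ directly, where $\rho^1_t$ is the number of marks on the first child; plugging in \eqref{Pmom} immediately collapses $\binom{k}{i}\,\P[N^{(i)}_{T-t}]\,\P[N^{(k-i)}_{T-t}]$ to $\binom{k}{i}\,i!\,(k-i)!=k!$, which is independent of $i$, so uniformity is immediate with almost no combinatorics. You instead invoke the pre-packaged Lemma~\ref{Qdescription} to get the rate for each unordered partition $P$, then do the combinatorial bookkeeping (halving for the child assignment, summing over the $\binom{a}{l}$ subsets) to recover the ordered-size distribution. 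Both routes are valid; the paper's is shorter because it bypasses the unordered-to-ordered translation entirely by tracking the child-$1$ mark count from the start, while yours has the advantage of reusing machinery already developed for the general (non-binary) case.
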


\begin{proof}
Suppose that we are given $\psi_{i}=s$. For the first part, by the symmetry lemma, the probability that the $j$th block splits next is
\[\int_0^{T-s} \Q^{a_j,T-s}(\psi_1\in\d t) \prod_{l\neq j} \Q^{a_l,T-s}(\psi_1 > t)\]
which by Lemma \ref{Qbots} equals
\[\int_0^{T-s}\bigg(-\frac{\d}{\d t} \Big(\frac{\P[N^{(a_j)}_{T-s-t}]}{\P[N^{(a_j)}_{T-s}]} e^{(m-1)rt} \Big)\bigg) \prod_{l\neq j} \frac{\P[N^{(a_l)}_{T-s-t}]}{\P[N^{(a_l)}_{T-s}]} e^{(m-1)rt}\,\d t.\]
If $\alpha\neq\beta$, then applying \eqref{Pratio}, the above becomes
\begin{align*}
&\int_0^{T-s}\bigg(-\frac{\d}{\d t} \Big(\frac{e^{(\beta-\alpha)(T-s-t)}-1}{e^{(\beta-\alpha)(T-s)}-1}\Big)^{a_j-1}\bigg) \prod_{l\neq j} \Big(\frac{e^{(\beta-\alpha)(T-s-t)}-1}{e^{(\beta-\alpha)(T-s)}-1}\Big)^{a_l-1} \,\d t\\
&= (a_j-1)(\beta-\alpha)\int_0^{T-s} e^{(\beta-\alpha)(T-s-t)}\frac{(e^{(\beta-\alpha)(T-s-t)}-1)^{a_j-2}}{(e^{(\beta-\alpha)(T-s)}-1)^{a_j-1}}\prod_{l\neq j} \Big(\frac{e^{(\beta-\alpha)(T-s-t)}-1}{e^{(\beta-\alpha)(T-s)}-1}\Big)^{a_l-1} \,\d t\\
&= (a_j-1)(\beta-\alpha)\int_0^{T-s} \frac{e^{(\beta-\alpha)(T-s-t)}}{e^{(\beta-\alpha)(T-s-t)}-1} \Big(\frac{e^{(\beta-\alpha)(T-s-t)}-1}{e^{(\beta-\alpha)(T-s)}-1}\Big)^{k-i-1} \,\d t.
\end{align*}
Since the integrand does not depend on $a_j$, and we know the sum of the above quantity over $j=1,\ldots,i+1$ must equal $1$ (since one of the blocks must split first), we get
\[(\beta-\alpha)\int_0^{T-s} \frac{e^{(\beta-\alpha)(T-s-t)}}{e^{(\beta-\alpha)(T-s-t)}-1} \Big(\frac{e^{(\beta-\alpha)(T-s-t)}-1}{e^{(\beta-\alpha)(T-s)}-1}\Big)^{k-i-1} \,\d t = \frac{1}{k-i-1}\]
and therefore the probability that the $j$th block splits next equals $\frac{a_j-1}{k-i-1}$ as claimed. If $\alpha=\beta$ then applying \eqref{critBDratio} in place of \eqref{Pratio} gives the same result.

For the second part, let $\rho^1_t$ be the number of marks following the first spine particle at time $t$. From the definition of $\Q^{k,T}$,
\[\Q^{k,T}(\rho^1_t = i\,|\,\tau_\emptyset=t) = \frac{\P[g_{k,T}\ind_{\{\rho^1_t = i\}}\,|\,\tau_\emptyset=t]}{\P[g_{k,T}\,|\,\tau_\emptyset=t]}.\]
By the Markov property, since each mark chooses uniformly from amongst the children available,
\[\P[g_{k,T}\ind_{\{\rho^1_t = i\}}\,|\,\tau_\emptyset=t] = \frac{\beta}{\beta+\alpha}\binom{k}{i}\frac{1}{2^k} \P[g_{i,T-t}]\P[g_{k-i,T-t}].\]
Lemma \ref{gproj} tells us that $\P[g_{j,s}] = \P[N^{(j)}_s]$ for any $j$ and $s$, so
\[\P[g_{k,T}\ind_{\{\rho^1_t = i\}}\,|\,\tau_\emptyset=t] = \frac{\beta}{\beta+\alpha}\binom{k}{i}\frac{1}{2^k} \P[N^{(i)}_{T-t}]\P[N^{(k-i)}_{T-t}].\]
If $\alpha\neq\beta$, then applying \eqref{Pmom} gives
\begin{align*}
\P[g_{k,T}\ind_{\{\rho^1_t = i\}}\,|\,\tau_\emptyset=t] &= \frac{\beta}{\beta+\alpha}\binom{k}{i}\frac{1}{2^k} i!(k-i)!\Big(\frac{\beta}{\beta-\alpha}\Big)^{k-2}e^{(\beta-\alpha)(T-t)}(e^{(\beta-\alpha)(T-t)}-1)^{k-2}\\
&= \frac{\beta}{\beta+\alpha}\frac{k!}{2^k} \Big(\frac{\beta}{\beta-\alpha}\Big)^{k-2}e^{(\beta-\alpha)(T-t)}(e^{(\beta-\alpha)(T-t)}-1)^{k-2}.
\end{align*}
Since this does not depend upon $i$, we deduce that the distribution of $\rho^1_t$ under $\Q^{k,T}$ must be uniform. The case $\alpha=\beta$ is the same but using \eqref{critBDratio} in place of \eqref{Pmom}. The result now follows from the symmetry lemma.
\end{proof}

\subsection{Proofs of Theorems \ref{noncritBDthm} and \ref{critBDthm}: explicit distribution functions for unordered split times}

We now have all the ingredients to prove our theorem on the distribution of the split times. We begin with the non-critical case.

\begin{proof}[Proof of Theorem \ref{noncritBDthm}]
By Proposition \ref{BDdensity}, the \emph{ordered} split times are independent of $\mathcal H$ and have density
\begin{multline*}
f_k^T(s_1,\ldots, s_{k-1})\\
= \frac{k!(\beta E_0-\alpha)^k (\beta-\alpha)^{2k-1}}{(E_0-1)^{k-1} E_0} \int_0^1 (1-y)^{k-1} \prod_{j=0}^{k-1} \frac{e^{(\beta-\alpha)(T-s_j)}}{(\beta(1-y)e^{(\beta-\alpha)(T-s_j)} + \beta y - \alpha)^2} \d y
\end{multline*}
for any $0\le s_1\le\ldots\le s_{k-1}\le 1$, where $s_0=0$.
Therefore (see Lemma \ref{unordering}) the \emph{unordered} split times are independent of $\mathcal H$ and have density
\begin{multline*}
\tilde f_k^T(s_1,\ldots, s_{k-1}) \\
= \frac{k (\beta E_0-\alpha)^k (\beta-\alpha)^{2k-1}}{(E_0-1)^{k-1} E_0} \int_0^1 (1-y)^{k-1} \prod_{j=0}^{k-1} \frac{e^{(\beta-\alpha)(T-s_j)}}{(\beta(1-y)e^{(\beta-\alpha)(T-s_j)} + \beta y - \alpha)^2} \d y.
\end{multline*}
Using Lemma \ref{integrateout} to integrate over $s_j$ for each $j=1,\ldots,k-1$, we get
\begin{multline*}
\P(\tilde \S_1 \ge s_1, \ldots, \tilde\S_{k-1} \ge s_{k-1} | N_T\ge k)\\
 = \frac{k(\beta E_0-\alpha)^k(\beta-\alpha)}{(E_0-1)^{k-1}E_0} \hspace{-1.1mm}\int_0^1 \hspace{-1mm}(1-y)^{k-1} \bigg(\hspace{-0.3mm}\prod_{j=1}^{k-1} \frac{E_j-1}{\beta(1-y)E_j + \beta y - \alpha}\bigg) \frac{E_0}{(\beta(1-y)E_0 + \beta y-\alpha)^2} \d y.
 \end{multline*}
Substituting $\theta=1-y$ and simplifying,
\begin{align*}
&\P(\tilde \S_1 \ge s_1, \ldots, \tilde\S_{k-1} \ge s_{k-1} | N_T\ge k)\\ 
&\hspace{5mm}= \frac{\beta k(E_0-\alpha/\beta)^k(\beta-\alpha)}{(E_0-1)^{k-1}E_0} \int_0^1 \bigg(\prod_{j=1}^{k-1} \frac{\theta (E_j-1)}{\theta E_j + 1-\theta - \alpha/\beta}\bigg) \frac{E_0}{(\beta-\alpha+\beta\theta (E_0-1))^2} \d \theta\\
&\hspace{5mm}= \frac{\beta k(E_0-\alpha/\beta)^k}{(E_0-1)^{k-1}(\beta-\alpha)} \int_0^1 \bigg(\prod_{j=1}^{k-1} \Big(1-\frac{1}{1+\theta \frac{\beta}{\beta-\alpha}(E_j-1)}\Big)\bigg) \frac{1}{(1+\theta\frac{\beta}{\beta-\alpha}(E_0-1))^2} \d \theta.
 \end{align*}
We can now apply the second part of Lemma \ref{partialfrac2}, with $e_j = \frac{\beta}{\beta-\alpha} (E_j-1)$ which gives
\begin{multline*}
\P(\tilde \S_1 \ge s_1, \ldots, \tilde\S_{k-1} \ge s_{k-1} | N_T\ge k) \\
= \frac{\beta k(E_0-\alpha/\beta)^k}{(E_0-1)^{k-1}(\beta-\alpha)} \Bigg[\frac{1}{1+e_0}\prod_{i=1}^{k-1}\frac{e_i}{e_i-e_0} + \sum_{j=1}^{k-1} \frac{e_j}{(e_j-e_0)^2}\bigg(\prod_{\substack{i=1\\ i\neq j}}^{k-1} \frac{e_i}{e_i-e_j}\bigg)\log\Big(\frac{1+e_0}{1+e_j}\Big)\Bigg].
\end{multline*}
The result follows.
\end{proof}

We now do the critical case, which is almost identical.

\begin{proof}[Proof of Theorem \ref{critBDthm}]
By Proposition \ref{BDdensity}, the \emph{ordered} split times are independent of $\mathcal H$ and have density
\begin{align*}
f_k^T(s_1,\ldots, s_{k-1}) &= \frac{k!(\beta T+1)^k}{T^{k-1}} \int_0^1 (1-y)^{k-1} \prod_{j=0}^{k-1} \frac{1}{(\beta(1-y)(T-s_j) + 1)^2} \d y.\\
&= \frac{k!(\beta T+1)^k}{T^{k-1}} \int_0^1 \frac{1}{(1+\theta\beta T)^2} \prod_{j=1}^{k-1} \frac{\theta}{(1+\theta\beta(T-s_j))^2} \d \theta.
\end{align*}
for any $0\le s_1\le\ldots\le s_{k-1}\le 1$, where $s_0=0$.
Therefore (see Lemma \ref{unordering}) the \emph{unordered} split times are independent of $\mathcal H$ and have density
\[\tilde f_k^T(s_1,\ldots,s_{k-1}) = \frac{k(\beta T+1)^k}{T^{k-1}} \int_0^1 \frac{1}{(1+\theta\beta T)^2} \prod_{j=1}^{k-1} \frac{\theta}{(1+\theta\beta(T-s_j))^2} \d \theta.\]
Integrating over $s_j$ for each $j=1,\ldots,k-1$, we get
\begin{align*}
&\P(\tilde \S_1 \ge s_1, \ldots, \tilde\S_{k-1} \ge s_{k-1} | N_T\ge k)\\
&\hspace{30mm} = k\beta T \Big(1+\frac{1}{\beta T}\Big)^k \int_0^1 \frac{1}{(1+\theta\beta T)^2} \prod_{j=1}^{k-1} \Big(1-\frac{1}{1+\theta\beta(T-s_j)}\Big) \d \theta\\
&\hspace{30mm} = k T \Big(1+\frac{1}{\beta T}\Big)^k \int_0^1 \frac{1}{(1+\theta T)^2} \prod_{j=1}^{k-1} \Big(1-\frac{1}{1+\theta(T-s_j)}\Big) \d \theta.
 \end{align*}
We can now apply the second part of Lemma \ref{partialfrac2}, with $e_j = (T-s_j)$ and $s_0=0$. This gives
\begin{multline*}
\P(\tilde \S_1 \ge s_1, \ldots, \tilde\S_{k-1} \ge s_{k-1} | N_T\ge k)\\
= k T \Big(1+\frac{1}{\beta T}\Big)^k \Bigg[\frac{1}{1+e_0}\prod_{i=1}^{k-1}\frac{e_i}{e_i-e_0} - \sum_{j=1}^{k-1} \frac{e_j}{(e_j-e_0)^2}\bigg(\prod_{\substack{i=1\\ i\neq j}}^{k-1} \frac{e_i}{e_i-e_j}\bigg)\log\Big(\frac{1+e_0}{1+e_j}\Big)\Bigg].
\end{multline*}
The result now follows from some simple manipulation.
\end{proof}

\section{The near-critical scaling limit}\label{nearcritsec}

We now let our offspring distribution depend on $T$, writing $\P_T$ in place of $\P$. We suppose that $m_T := \P_T[L] = 1+\mu/T + o(1/T)$ for some $\mu\in\R$, and $\P_T[L(L-1)] = \sigma^2 + o(1)$ for some $\sigma>0$. We also assume that $L^2$ is uniformly integrable (that is, for all $\eps>0$ there exists $M$ such that $\sup_T \P_T[L^2\ind_{\{L\ge M\}}]<\eps$). We define $\Q^{k,T}_T$ just as before, except that it is defined relative to $\P^k_T$ instead of $\P^k$.

In order to prove our results we would like some conditions on the higher moments of $L$. The next lemma ensures that we may make some further assumptions without loss of generality.

\begin{lem}\label{momrelax}
Fix $k\ge 1$. Under $\P_T$, there exists a coupling between our Galton-Watson tree with offspring distribution $L$ (and its $k$ chosen particles) and another Galton-Watson tree with offspring distribution $\tilde L$ satisfying
\begin{itemize}
\item $\P_T[\tilde L] = 1+\mu/T + o(1/T)$;
\item $\P_T[\tilde L(\tilde L-1)] = \sigma^2 + o(1)$;
\item there exists a deterministic sequence $J(T)=o(T)$ such that $\P_T(\tilde L = j) = 0$ for all $j\ge J(T)$,
\end{itemize}
such that for each $k$, conditionally on $N_T\ge k$, with probability tending to $1$, the two trees induced by the $k$ chosen particles are equal until time $T$.
\end{lem}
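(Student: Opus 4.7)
The plan is to construct $\tilde L$ as a truncation of $L$ at some threshold $J(T)=o(T)$, and to couple the two Galton--Watson processes on a single probability space by using the same branching times and the same offspring values whenever these are below $J(T)$. The coupling fails exactly when some particle born before time $T$ has $L\ge J(T)$, and the heart of the proof is showing that this bad event is much less likely than $\{N_T\ge k\}$.

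First I would fix the truncation level. For each $n\in\N$, uniform integrability of $L^2$ supplies $K_n$ such that $\sup_T \P_T[L^2\ind_{\{L\ge K_n\}}]\le 1/n$. A standard diagonal selection---taking $J(T):=K_n$ whenever $T$ lies in a suitable interval $[T_n,T_{n+1})$---yields $J(T)\to\infty$ with $J(T)=o(T)$ and with $\epsilon(T):=\P_T[L^2\ind_{\{L\ge J(T)\}}]\to 0$ fast enough that simultaneously $T\epsilon(T)/J(T)\to 0$ and $T^2\epsilon(T)/J(T)^2\to 0$. Define $\tilde L:=L\ind_{\{L<J(T)\}}$, which is bounded above by $J(T)$. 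The truncation errors are controlled by
\[\P_T[L\ind_{\{L\ge J(T)\}}] \le \epsilon(T)/J(T) = o(1/T), \qquad \P_T[L(L-1)\ind_{\{L\ge J(T)\}}] \le \epsilon(T) = o(1),\]
so $\P_T[\tilde L]=1+\mu/T+o(1/T)$ and $\P_T[\tilde L(\tilde L-1)]=\sigma^2+o(1)$, as required.

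On a common probability space I drive both processes by the same branching times and declare that, at each branching event, the $L$-process has $L$ children and the $\tilde L$-process has $\tilde L=L\ind_{\{L<J(T)\}}$ children. Let $B$ be the event that some particle branching on $[0,T]$ has $L\ge J(T)$. Off $B$, the two full trees coincide up to time $T$; in particular $N_T$ agrees, and drawing the uniform sample of $k$ particles using shared extra randomness produces identical ancestral trees. The number $M$ of bad branching events satisfies, by a many-to-one computation,
\[\P_T[M] = \P_T(L\ge J(T))\cdot r\int_0^T \P_T[N_s]\,\d s \le C r T\,\P_T(L\ge J(T)),\]
because $\P_T[N_s]=e^{r(m_T-1)s}$ is uniformly bounded for $s\le T$ in the near-critical regime. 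Combining Markov's inequality, the Chebyshev bound $\P_T(L\ge J(T))\le \epsilon(T)/J(T)^2$, and the Kolmogorov-type lower bound $\P_T(N_T\ge k)\ge c/T$ for the near-critical survival probability gives
\[\P_T(B\mid N_T\ge k) \le \frac{\P_T(B)}{\P_T(N_T\ge k)} \le C'\,\frac{T^2\,\epsilon(T)}{J(T)^2} \to 0.\]

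The main obstacle is the tension in the choice of $J(T)$: it must be small enough that $J(T)=o(T)$, yet large enough that the coupling failure probability decays strictly faster than $\P_T(N_T\ge k)\asymp 1/T$. Uniform integrability of $L^2$ is precisely what makes both constraints simultaneously satisfiable through the diagonal construction above; a mere finite-variance hypothesis would not give the vanishing $\epsilon(T)$ that the rare-event estimate needs after dividing by the small conditioning probability.
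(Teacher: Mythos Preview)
Your argument is correct and takes a genuinely different route from the paper's. The paper does not attempt to couple the full trees; it shows only that the ancestral lineage of each sampled particle avoids a large birth. This is done via the one-spine measure $\Q^{1,T}_T$: births along the spine form a rate-$rm_T$ Poisson process with size-biased offspring, so the expected number of size-$\ge J(T)$ births along the spine is at most $rT\sum_{j\ge J(T)} jp_j^{(T)}$, and only the weaker truncation condition $\sum_{j\ge J(T)} jp_j^{(T)}=o(1/T)$ is needed. An FKG step decouples the bad-ancestry event from the factor $1/N_T$, and an appeal to the near-critical Yaglom theorem transfers the conclusion from $\{N_T\ge1\}$ to $\{N_T\ge k\}$. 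Your whole-tree approach is more elementary---no spine measure, no FKG---but pays for this with the stronger requirement $T^2\epsilon(T)/J(T)^2\to 0$, since the crude bound $\P_T(B\mid N_T\ge k)\le \P_T(B)/\P_T(N_T\ge k)$ loses a factor of $T$.

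One caution on your diagonal construction: fixing threshold values $K_n$ from the UI bound $\le 1/n$ and then choosing intervals $[T_n,T_{n+1})$ can fail when the minimal admissible $K_n$ grow very rapidly (for instance if $\sup_T\P_T[L^2\ind_{\{L\ge K\}}]$ decays like $1/\log\log K$, the constraint $T_{n+1}\ll\sqrt{n}\,K_n$ becomes incompatible with $K_{n+1}\ll T_{n+1}$). The fix is to imitate the paper's construction of $J(T)$: for each fixed $\epsilon>0$ one has $\epsilon^{-2}\,\delta(\epsilon T)\to 0$ as $T\to\infty$, where $\delta(K)=\sup_T\P_T[L^2\ind_{\{L\ge K\}}]$; diagonalising over a sequence $\epsilon_i\downarrow 0$ then yields $J(T)=\epsilon_{I(T)}T$ with both $J(T)=o(T)$ and $(T/J(T))^2\delta(J(T))\to 0$, exactly what you need.
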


The proof of this lemma is interesting, but not really relevant to the rest of our investigation, so we have included it in the appendix.

In light of Lemma \ref{momrelax}, we further assume without loss of generality that there is a deterministic sequence $J(T)=o(T)$ such that our offspring distribution $L$ satisfies $\P_T(L=j)=0$ for all $j\ge J(T)$; in particular, for any $j\ge 3$,
\begin{equation}\label{momcond}
\P_T[L^{(j)}] = \sum_{i=1}^{J(T)}i^{(j)}p^{(T)}_i \le J(T)^{j-2}\sum_{i=1}^{J(T)}i(i-1)p^{(T)}_i = J(T)^{j-2}(\sigma^2+o(1)) = o(T^{j-2}).
\end{equation}

\subsection{Estimating moments and generating functions under $\P$}

In Section \ref{BDgenfuncts}, we calculated generating functions and moments of the population size under $\P$ precisely for birth-death processes. With more complicated offspring distributions this is no longer possible, but the near-criticality ensures that we can give good approximations.

\begin{lem}\label{genGWmom}
For $k\ge 1$, the $k$th descending moment $M_k(t) = \P[ N_t^{(k)}]$ of any continuous-time Galton-Watson process satisfies
\[M'_k(t)  =  kr(m-1)M_k(t) + r\sum_{j =2 }^k \binom{k}{j}\, \P[L^{(j)}]\, M_{k+1-j}(t).\]
\end{lem}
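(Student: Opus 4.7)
The plan is to derive this ODE directly from the Kolmogorov forward equation \eqref{kolfor} for the probability generating function $F(\theta,t) = \P[\theta^{N_t}]$, which states $\partial_t F(\theta,t) = r u(\theta)\partial_\theta F(\theta,t)$ with $u(\theta) = \P[\theta^L] - \theta$. The key observation is that the descending factorial moments are precisely the derivatives of $F$ at $\theta=1$, namely $M_k(t) = \partial_\theta^k F(\theta,t)\big|_{\theta=1}$, so differentiating the forward equation $k$ times in $\theta$ and then evaluating at $\theta=1$ should produce a linear ODE for $M_k(t)$ in terms of lower descending moments.

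First I would apply the general Leibniz rule to the right-hand side of the forward equation to obtain
\[\partial_\theta^k\bigl(u(\theta)\partial_\theta F(\theta,t)\bigr) = \sum_{j=0}^k \binom{k}{j}\, u^{(j)}(\theta)\, \partial_\theta^{k+1-j}F(\theta,t).\]
Then I would take the left-limit $\theta\uparrow 1$ and identify each contribution. Since $u(1) = \sum_j p_j - 1 = 0$, the $j=0$ term drops out; since $u'(1) = m-1$, the $j=1$ term contributes $kr(m-1)M_k(t)$; and for $j\ge 2$, differentiating $u(\theta) = \sum_j p_j \theta^j - \theta$ gives $u^{(j)}(1) = \sum_i i^{(j)} p_i = \P[L^{(j)}]$, producing exactly the claimed sum $r\sum_{j=2}^k \binom{k}{j}\P[L^{(j)}] M_{k+1-j}(t)$. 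Interchanging $\partial_t$ with $\partial_\theta^k$ on the left-hand side yields $M'_k(t)$, completing the identification.

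The only technical point is justifying these interchanges of differentiation with the series defining $F$ and evaluating at the boundary $\theta = 1$. Under the bounded-support assumption on $L$ imposed after Lemma \ref{momrelax} (or under the standing assumption $\P[L^k]<\infty$ in Section \ref{CoMsec}), $u(\theta)$ is analytic and each $M_k(t)$ is finite, so the exchange is routine. One mild subtlety is that the $j=0$ Leibniz term formally involves $M_{k+1}(t)$, which may fail to be finite without additional moment assumptions; however, because this term is multiplied by the vanishing factor $u(1)=0$, it contributes nothing. Should one wish to avoid invoking $M_{k+1}(t)$ at all, the cleanest route is to perform the entire differentiation and Leibniz expansion for $\theta\in(0,1)$, where all series converge absolutely by monotone convergence, and then pass to the limit $\theta\uparrow 1$ term-by-term using monotone/dominated convergence on each finite-order $\partial_\theta^{k+1-j}F(\theta,t)$. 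This is the only non-routine bookkeeping; the algebraic identification of the right-hand side is immediate.
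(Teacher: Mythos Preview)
Your proposal is correct and follows essentially the same route as the paper: differentiate the forward equation $\partial_t F = r u(\theta)\partial_\theta F$ in $\theta$ via Leibniz, evaluate at $\theta=1$, and identify $u(1)=0$, $u'(1)=m-1$, $u^{(j)}(1)=\P[L^{(j)}]$. If anything, you are slightly more careful than the paper about the $j=0$ term and the interchange of limits, which the paper dispatches with ``using the fact that $F$ is smooth.''
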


\begin{proof}
As before let $F(\theta,t) = \P[\theta^{N_t}]$, and let $u(\theta) = \P[\theta^L] - \theta$. Then $F$ and $u$ satisfy the Kolmogorov forward equation \eqref{kolfor}
\begin{equation}\label{nearcritforward}
\frac{\partial F(\theta,t)}{\partial t} = ru(\theta) \frac{\partial F(\theta,t)}{\partial \theta}.
\end{equation}
Note that
\begin{equation}\label{nearcritMtoF}
M_k(t) = \Big[ \frac{\partial^k }{\partial \theta^k} F(\theta,t) \Big]_{\theta=1},
\end{equation}
so, using the fact that $F$ is smooth,
\[\frac{d}{dt} M_k(t) = \frac{d}{dt} \Big[ \frac{\partial^k }{\partial \theta^k} F(\theta,t) \Big]_{\theta=1} = \Big[ \frac{\partial }{\partial t} \frac{\partial^k }{\partial \theta^k} F(\theta,t) \Big]_{\theta=1} = \Big[  \frac{\partial^k }{\partial \theta^k}\frac{\partial }{\partial t} F(\theta,t) \Big]_{\theta=1}.\]
Applying \eqref{nearcritforward},
\[\frac{d}{dt} M_k(t) = \Big[  \frac{\partial^k }{\partial \theta^k} \Big(r u(\theta) \frac{\partial F}{\partial \theta}  \Big) \Big]_{\theta=1}\]
so using \eqref{nearcritMtoF} again,
\[\frac{d}{dt} M_k(t) = r\sum_{j=0}^{k} \binom{k}{j} u^{(j)}(1)\, M_{k+1-j}(t).\]
Finally, $u(1) = 0$, $u'(1) = (m-1)$, and $u^{(j)}(1) = \P[L^{(j)}]$ for $j\ge 2$.
\end{proof}

For real-valued functions $f$ and $g$, we write $f(x) = o(g(x))$ to mean that $f(x)/g(x)\to 0$ as $x\to\infty$.

\begin{lem}\label{scaledPmoments}
If $\mu\neq 0$ then the descending moments at scaled times satisfy
\[\lim_{T\to\infty} \frac{\P_T[N_{sT}^{(k)}]}{T^{k-1}} = \Big( \frac{\sigma^2}{2\mu} \Big)^{k-1} k! e^{r\mu s} ( e^{r\mu s} - 1 )^{k-1}\]
for all $k\ge 1$ and $s\in[0,1]$.
If $\mu = 0$ then instead
\[\lim_{T\to\infty} \frac{\P_T[N_{sT}^{(k)}]}{T^{k-1}} = k!\Big(\frac{r\sigma^2 s}{2}\Big)^{k-1}\]
for all $k\ge 1$ and $s\in[0,1]$.
\end{lem}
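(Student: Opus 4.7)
The plan is to induct on $k$ via the moment ODE of Lemma~\ref{genGWmom}. Writing $M_k(t) := \P_T[N_t^{(k)}]$, this ODE reads
\[
M_k'(t) = kr(m_T-1)M_k(t) + r\sum_{j=2}^k\binom{k}{j}\P_T[L^{(j)}]\,M_{k+1-j}(t),
\]
with $M_1(0)=1$ and $M_k(0)=0$ for $k\ge 2$. The base case $k=1$ is immediate, since the sum is empty and $M_1(t)=e^{r(m_T-1)t}$; evaluating at $t=sT$ gives $M_1(sT)\to e^{r\mu s}$ (or $\to 1$ when $\mu=0$), matching both formulae since for $k=1$ the products $(e^{r\mu s}-1)^{k-1}$ and $s^{k-1}$ are empty.

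For the inductive step, I would solve the linear ODE by variation of parameters to obtain
\[
M_k(sT) = r\int_0^{sT} e^{kr(m_T-1)(sT-u)}\sum_{j=2}^k\binom{k}{j}\P_T[L^{(j)}]\,M_{k+1-j}(u)\,du.
\]
The key point is that only $j=2$ contributes to the leading asymptotic. For $j\ge 3$, the truncation bound \eqref{momcond} gives $\P_T[L^{(j)}]=o(T^{j-2})$; combined with a uniform-in-$T$ bound $M_{k+1-j}(u)=O(T^{k-j})$ on $[0,T]$ (obtained inductively straight from the ODE via a Gr\"onwall argument, since the coefficient $kr(m_T-1)=O(1/T)$ contributes only a bounded integrating factor over $[0,T]$), each such $j\ge 3$ term contributes $o(T^{k-1})$ to the integral and therefore vanishes after dividing by $T^{k-1}$. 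The $j=2$ term, after rescaling $u=vT$, becomes
\[
\frac{M_k(sT)}{T^{k-1}} = r\binom{k}{2}\sigma^2\int_0^s e^{kr(m_T-1)T(s-v)}\,\frac{M_{k-1}(vT)}{T^{k-2}}\,dv + o(1),
\]
and the inductive hypothesis combined with dominated convergence (using the same uniform bound as majorant) delivers the limit.

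When $\mu\neq 0$, setting $a_{k-1}(v)=(\sigma^2/(2\mu))^{k-2}(k-1)!\,e^{r\mu v}(e^{r\mu v}-1)^{k-2}$, the limit becomes $r\binom{k}{2}\sigma^2\int_0^s e^{kr\mu(s-v)}a_{k-1}(v)\,dv$, which the substitution $w=e^{r\mu v}-1$ reduces to an elementary power integral evaluating to $(\sigma^2/(2\mu))^{k-1}k!\,e^{r\mu s}(e^{r\mu s}-1)^{k-1}$, as required. The critical case $\mu=0$ is handled identically but simpler: $e^{kr\mu(s-v)}\to 1$ and $a_{k-1}(v)=(k-1)!(r\sigma^2 v/2)^{k-2}$, so the integral becomes $r\binom{k}{2}\sigma^2(k-1)!(r\sigma^2/2)^{k-2}\int_0^s v^{k-2}dv = k!(r\sigma^2 s/2)^{k-1}$. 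The main obstacle is really just securing the uniform a priori bound $M_k(t)\le C_k T^{k-1}$ on $[0,T]$, which is needed both to apply dominated convergence at stage $k-1$ and to kill the $j\ge 3$ terms at stage $k$; this is mild once set up inductively, since the source terms are controlled by the inductive hypothesis and the integrating factor is $O(1)$.
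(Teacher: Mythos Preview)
Your proposal is correct and follows essentially the same approach as the paper: induction on $k$ via the moment ODE of Lemma~\ref{genGWmom}, using \eqref{momcond} to show that only the $j=2$ term contributes to leading order. The only stylistic difference is that the paper works with the rescaled function $\hat M_k(s)=M_k(sT)$, approximates its ODE as $\hat M_k'(s)=kr\mu\hat M_k(s)+Tr\binom{k}{2}\sigma^2\hat M_{k-1}(s)+o(T^{k-1})$, and then applies the integrating factor $e^{-kr\mu s}$ to this approximate equation; you instead write the exact variation-of-parameters integral and pass to the limit by dominated convergence. Your route is arguably a touch more careful in that you make the uniform a priori bound $M_k\le C_kT^{k-1}$ explicit (the paper's $o(T^{k-1})$ error terms tacitly rely on the same bound), but the two arguments are really the same.
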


\begin{proof}
We proceed by induction. Note that both statements are true for $k=1$. Letting $M_k(t) = \P_T[N_t^{(k)}]$, by Lemma \ref{genGWmom} we have
\begin{align*}
M'_k(t)  =  kr(m_T-1)M_k(t) + r\sum_{j =2 }^k \binom{k}{j} \P_T[L^{(j)}]\,M_{k+1-j}(t).
\end{align*}
So letting $\hat M_k(s) = M_k(sT)$, we have
\begin{align}
\hat M'_k(s) &= T \Big(  kr(m_T-1)\hat M_k(s) + r\sum_{j =2 }^k \binom{k}{j} \P_T[L^{(j)}]\,\hat M_{k+1-j}(s)  \Big)\nonumber\\
&= k r\mu \hat M_k(s) + T r \binom{k}{2} \sigma^2 \hat M_{k-1}(s) + o(T^{k-1})\label{bothmucases}
\end{align}
where we used the induction hypothesis to get the last equality.

We now consider the cases $\mu\neq 0$ and $\mu = 0$ separately. In the case $\mu\neq 0$, using the integrating factor $e^{-kr\mu s}$, and applying the induction hypothesis again, we get
\begin{equation}\label{intfactor}
\frac{\d}{\d s}\big(e^{-kr\mu s} \hat M_k(s)\big) = T^{k-1} k! (k-1)r\mu \Big(\frac{\sigma^2}{2\mu}\Big)^{k-1} e^{-(k-1)r\mu s} (e^{r\mu s}-1)^{k-2} + e^{-kr\mu s} O(T^{k-2}).
\end{equation}
Noting that
\[(k-1)r\mu e^{-(k-1)r\mu s} (e^{r\mu s}-1)^{k-2} = \frac{\d}{\d s}\big(e^{-(k-1)r\mu s}(e^{r\mu s}-1)^{k-1}\big),\]
by integrating \eqref{intfactor} we obtain
\[e^{-kr\mu s} \hat M_k(s) = T^{k-1} k! \Big(\frac{\sigma^2}{2\mu}\Big)^{k-1}e^{-(k-1)r\mu s} (e^{r\mu s}-1)^{k-1} + e^{-kr\mu s} O(T^{k-2}).\]
Multiplying through by $e^{kr\mu s}$ gives the result for $\mu\neq 0$.

If $\mu=0$, then from \eqref{bothmucases} and the induction hypothesis, we have
\[\hat M_k'(s) = T^{k-1} k! \Big(\frac{r\sigma^2}{2}\Big)^{k-1} (k-1) s^{k-2} + o(T^{k-1})\]
and integrating directly gives the result.
\end{proof}

\subsection{Asymptotics for the generating function}

Define
\[F_T(\theta,t) = \P_T[\theta^{N_t}], \hspace{10mm} u_T(\theta) = \P_T[\theta^L] - \theta,\]
and
\[f_T(\phi,s) = T \big( 1 - \P_T[e^{- \frac{ \phi}{T} N_{sT} } ] \big) = T(1-F_T(e^{-\phi/T},sT)).\]
The following result will be important for approximating terms that arise from Campbell's formula.

\begin{lem}\label{fTconv}
For each $\phi\ge 0$,
\[f_T(\phi,s)\to f(\phi,s)\]
and
\[T^2 u_T(F_T(e^{-\phi/T},sT)) \to -\mu f(\phi,s) + \frac{\sigma^2}{2}f(\phi,s)^2\]
as $T\to\infty$, uniformly over $s\in[0,1]$, where
\[f(\phi,s) = \frac{ \phi e^{ \mu r s }}{ 1 + \frac{\sigma^2}{2\mu}  \phi ( e^{ \mu r s } - 1 )} \hspace{4mm} \hbox{ if } \hspace{1mm} \mu\neq 0 \]
and
\[f(\phi,s) = \frac{\phi}{1+r\sigma^2\phi s/2} \hspace{4mm} \hbox{ if } \hspace{1mm} \mu= 0.\]
\end{lem}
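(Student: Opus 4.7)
The plan is to use Kolmogorov's backward equation \eqref{kolback} to show that $f_T(\phi,s)$ approximately satisfies a Riccati-type ODE in $s$, whose limiting solution is exactly the claimed $f(\phi,s)$. The second assertion of the lemma will then follow directly from the Taylor expansion of $u_T$ used to derive this ODE, combined with the convergence $f_T \to f$.

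Concretely, from \eqref{kolback} we have $\partial_t F_T(\theta,t) = r\,u_T(F_T(\theta,t))$, so a chain rule computation gives $\partial_s f_T(\phi,s) = -rT^2 u_T\bigl(1 - f_T(\phi,s)/T\bigr)$. I would then Taylor-expand $u_T$ around $\theta=1$: since $u_T(1)=0$, $u_T'(1) = m_T - 1 = \mu/T + o(1/T)$, $u_T''(1) = \P_T[L(L-1)] = \sigma^2 + o(1)$, and (using the truncation from Lemma \ref{momrelax}) the derivative bound $0 \le u_T'''(\theta) \le u_T'''(1) = \P_T[L(L-1)(L-2)] \le J(T)(\sigma^2+o(1)) = o(T)$ on $[0,1]$, Lagrange's remainder gives $T^2 u_T(1-x) = -\mu(Tx) + \tfrac{\sigma^2}{2}(Tx)^2 + o(1)$ uniformly for $Tx$ in any bounded set. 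Applied at $x = f_T(\phi,s)/T$, this yields $\partial_s f_T(\phi,s) = r\mu f_T(\phi,s) - \tfrac{r\sigma^2}{2} f_T(\phi,s)^2 + o(1)$, while the initial condition is $f_T(\phi,0) = T(1-e^{-\phi/T}) = \phi + O(1/T)$.

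To justify applying the Taylor expansion at $x = f_T/T$ I need uniform boundedness: using $1-e^{-y}\le y$ and $\P_T[N_{sT}] = e^{r(m_T-1)sT}$, we get $f_T(\phi,s) \le \phi\, e^{r(m_T-1)sT} = \phi\, e^{r\mu s + o(1)}$, which is bounded on $s \in [0,1]$ for $T$ large. On the limiting side, the ODE $\partial_s f = r\mu f - \tfrac{r\sigma^2}{2}f^2$ with $f(0)=\phi$ is a Bernoulli equation; the substitution $g = 1/f$ linearises it, and solving explicitly recovers the formula for $f(\phi,s)$ stated in the lemma (with the $\mu=0$ case obtained by solving $\partial_s f = -\tfrac{r\sigma^2}{2}f^2$ directly, or by taking $\mu\to 0$ in the $\mu\ne0$ formula).

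Finally, writing $h_T = f_T - f$ and subtracting the two ODEs gives $\partial_s h_T = \bigl[r\mu - \tfrac{r\sigma^2}{2}(f_T+f)\bigr]h_T + o(1)$; since the bracketed coefficient is uniformly bounded on $[0,1]$ by the a priori bounds above, Gronwall's inequality yields $|h_T(s)| \le (|h_T(0)| + o(1))e^{C} = o(1)$ uniformly in $s\in[0,1]$, giving the first convergence. The second convergence is then immediate: $T^2 u_T(F_T(e^{-\phi/T},sT)) = -\mu f_T(\phi,s) + \tfrac{\sigma^2}{2}f_T(\phi,s)^2 + o(1) \to -\mu f(\phi,s) + \tfrac{\sigma^2}{2}f(\phi,s)^2$ uniformly in $s$. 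The main obstacle is the Taylor remainder bound, which is precisely where the uniform integrability hypothesis (via the truncation $J(T)=o(T)$ from Lemma \ref{momrelax}) is essential to ensure $u_T'''$ grows slowly enough compared with $T$.
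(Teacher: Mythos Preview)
Your proposal is correct and follows essentially the same route as the paper: derive an approximate Riccati ODE for $f_T$ from the backward equation, establish uniform boundedness of $f_T$, identify the limiting ODE and its explicit solution, and close with Gronwall. The only cosmetic difference is in controlling the error term: the paper expands $(1-f_T/T)^j$ binomially, swaps the order of summation, and invokes $\P_T[L^{(i)}]=o(T^{i-2})$ for every $i\ge 3$, whereas you package the same estimate more compactly via Taylor's theorem with Lagrange remainder and the single bound $u_T'''(\theta)\le u_T'''(1)=\P_T[L^{(3)}]=o(T)$ on $[0,1]$.
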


\begin{proof}
First we show that for each $\phi$, $f_T$ is bounded in $T>0$ and $s\in[0,1]$. Note that $x \mapsto 1 - e^{- \kappa x}$ is concave and increasing for any $\kappa\ge 0$, so by Jensen's inequality,
\[f_T(\phi,s) = T \big( 1 - \P[e^{- \frac{\phi}{T} N_{sT} } ]\big) \leq T \big( 1 - e^{- \frac{ \phi}{T} \P_T[N_{sT}]}\big) \leq T \big( 1 - e^{- \frac{ \phi}{T}\exp(r\mu + o(1))}\big).\]
Applying the inequality $1-e^{-x}\le x$, we see that
\[f_T(\phi,s) \le \phi e^{r\mu + o(1)}.\]
Now, with $F_T(\theta,t) = \P_T[\theta^{N_t}]$, we have
\begin{equation}\label{ftoF}
\frac{\partial f_T(\phi,s)}{\partial s} = \frac{\partial}{\partial s} \big(T(1-F_T(e^{-\phi/T},sT))\big) = -T^2 \frac{\partial F_T(e^{-\phi/T},t)}{\partial t}\Big|_{t=sT}.
\end{equation}
By the Kolmogorov backwards equation \eqref{kolback},
\begin{equation}\label{FtoUKol}
\frac{\partial}{\partial t}F_T(\theta,t) = ru_T(F_T(\theta,t)) = r\P_T[F_T(\theta,t)^L] - rF_T(\theta,t),
\end{equation}
so
\[\frac{\partial f_T(\phi,s)}{\partial s} = T^2 r \sum_{j=0}^\infty p_j^{(T)} \big( F(e^{-\phi/T},sT) - F(e^{-\phi/T},sT)^j\big) = T^2 r \sum_{j=0}^\infty p_j^{(T)} \Big( 1-\frac{f_T}{T} - \Big(1-\frac{f_T}{T}\Big)^j\Big)\]
where $p^{(T)}_j = \P_T(L=j)$. Expanding $(1-f_T/T)^j$, we get
\begin{align*}
\frac{\partial f_T(\phi,s)}{\partial s} &= T^2 r\sum_{j=0}^\infty p^{(T)}_j \bigg( (j-1) \frac{f_T}{T} - \frac{j(j-1)f_T^2}{2T^2} - \sum_{i=3}^j \binom{j}{i} \Big(-\frac{f_T}{T}\Big)^i \bigg)\\
&= r\mu f_T - \frac{r\sigma^2}{2} f_T^2 + o(1) -T^2 r \sum_{j=0}^\infty p^{(T)}_j \sum_{i=3}^j \binom{j}{i}\Big(-\frac{f_T}{T}\Big)^i.
\end{align*}
Swapping the order of summation, this becomes
\begin{align}
\frac{\partial f_T(\phi,s)}{\partial s} &= r\mu f_T - \frac{r\sigma^2}{2} f_T^2 + o(1) - T^2 r \sum_{i=2}^\infty \frac{1}{i!}\Big(-\frac{f_T}{T}\Big)^i \sum_{j=i}^\infty p_j^{(T)} j(j-1)\ldots (j-i+1)\nonumber\\
&= r\mu f_T - \frac{r\sigma^2}{2} f_T^2 + o(1) - T^2 r \sum_{i=2}^\infty \frac{1}{i!}\Big(-\frac{f_T}{T}\Big)^i \P_T[L^{(i)}]\nonumber\\
&= r\mu f_T - \frac{r\sigma^2}{2} f_T^2 + o(1)\label{partialfeqn}
\end{align}
since $f_T$ is bounded and $\P_T[L^{(i)}]=o(T^{i-2})$ for each $i\ge 3$ (see \eqref{momcond}). Note in particular that the $o(1)$ term is uniform in $s$.

Note that $f$ is the solution to
\[\frac{\partial f}{\partial s} = r\mu f - \frac{r\sigma^2}{2} f^2\]
with $f(\phi,0) = \phi$. Setting $h_T(\phi,s) = f_T(\phi,s)-f(\phi,s)$ we have
\[\frac{\partial h_T}{\partial s} = r\mu (f_T - f) - \frac{r\sigma^2}{2} (f_T^2-f^2) + o(1)\]
where the $o(1)$ term is uniform in $s$. Integrating over $s$ with $\phi$ fixed,
\[h_T(\phi,s) = h_T(\phi,0) + r\mu\int_0^s h_T(\phi,s')\d s' - \frac{r\sigma^2}{2}\int_0^s h_T(\phi,s')(f_T(\phi,s')+f(\phi,s'))\d s + o(1).\]
For fixed $\phi$, both $f_T$ and $f$ are bounded in $s$ and $T$, say by $M_\phi$. Also $|h_T(\phi,0)| = T(1-e^{-\phi/T}) - \phi = o(1)$. Thus
\[|h_T(\phi,s)| \le r\int_0^s |h_T(\phi,s')|(\mu + \sigma^2 M_\phi/2) \d s' + o(1),\]
where again the $o(1)$ term is uniform in $s$. Gronwall's inequality then tells us that $|h_T(\phi,s)| \to 0$ uniformly in $s$. This proves the first part of the lemma.

The second part of the lemma is now implicit in our calculations above: by \eqref{FtoUKol} and then \eqref{ftoF},
\[u_T(F_T(e^{-\phi/T},sT)) = \frac{1}{r}\frac{\partial}{\partial t} F_T(e^{-\phi/T},t) |_{t=sT} = -\frac{1}{rT^2} \frac{\partial f_T(\phi,s)}{\partial s}.\]
Applying \eqref{partialfeqn} tells us that
\[T^2 u_T(F_T(e^{-\phi/T},sT)) = - \mu f_T + \frac{\sigma^2}{2} f_T^2 + o(1),\]
and by the first part of the lemma we get
\[T^2 u_T(F_T(e^{-\phi/T},sT)) \to -\mu f + \frac{\sigma^2}{2} f^2.\qedhere\]
\end{proof}

\begin{lem}\label{scaledPext}
For any $s\in(0,1]$, as $T\to\infty$,
\[T\P_T(N_{sT}>0) \to \frac{2\mu e^{\mu r s}}{\sigma^2(e^{\mu r s}-1)}\hspace{5mm} \hbox{ if } \mu\neq 0\]
and
\[T\P_T(N_{sT}>0) \to \frac{2}{r\sigma^2 s}\hspace{5mm} \hbox{ if } \mu = 0.\]
\end{lem}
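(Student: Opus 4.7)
I would identify the limit of $g_T(s) := T\P_T(N_{sT}>0) = T(1-F_T(0,sT))$ by deriving and solving an ODE for it. Using the Kolmogorov backwards equation \eqref{kolback} at $\theta = 0$,
\begin{align*}
g_T'(s) = -T^2 r\, u_T\bigl(1 - g_T(s)/T\bigr), \qquad g_T(0) = T.
\end{align*}
Expanding $u_T$ around $\theta = 1$ and using $T(m_T - 1) \to \mu$, $\P_T[L(L-1)] \to \sigma^2$, together with $\P_T[L^{(j)}] = o(T^{j-2})$ for $j \ge 3$ (which holds by the truncation of Lemma \ref{momrelax} as in \eqref{momcond}), one verifies, exactly as in the derivation of \eqref{partialfeqn}, that
\begin{align*}
-T^2 r\, u_T(1 - y/T) = r\mu y - \tfrac{r \sigma^2}{2} y^2 + o(1)
\end{align*}
uniformly for $y$ in any bounded set.

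To avoid the singular initial value $g_T(0) = T$, I would pass to the reciprocal $h_T(s) := 1/g_T(s)$, which satisfies $h_T(0) = 1/T \to 0$ and (by direct computation using the expansion above) the approximately linear ODE
\begin{align*}
h_T'(s) = \tfrac{r\sigma^2}{2} - r\mu\, h_T(s) + o(1)
\end{align*}
uniformly on any interval where $g_T$ is bounded above. The limiting linear equation $h'(s) = r\sigma^2/2 - r\mu h(s)$ with $h(0) = 0$ has unique solution $h(s) = (\sigma^2/(2\mu))(1 - e^{-r\mu s})$ for $\mu \neq 0$ and $h(s) = r\sigma^2 s/2$ for $\mu = 0$, and taking reciprocals gives exactly the limits claimed in the lemma. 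The matching lower bound $\liminf_T g_T(s) \ge g(s)$ is then obtained from Lemma \ref{fTconv}: since $\theta \mapsto F_T(\theta,t)$ is continuous and increasing, monotone convergence gives $f_T(\phi, s) \nearrow g_T(s)$ as $\phi \to \infty$ pointwise, so $\liminf_T g_T(s) \ge \lim_T f_T(\phi, s) = f(\phi, s)$ for each $\phi$; sending $\phi \to \infty$ in the explicit formula for $f$ matches $g(s)$ in both the $\mu \neq 0$ and $\mu = 0$ cases.

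The main obstacle is producing a uniform a priori upper bound on $g_T$ on intervals $[\epsilon, 1]$, without which the Taylor expansion of $u_T$ is not justified uniformly in the ODE for $h_T$. Since $g_T$ is monotone decreasing in $s$ (extinction is absorbing), it suffices to bound $g_T(\epsilon)$ uniformly in $T$ for each fixed $\epsilon > 0$. This can be accomplished by a short boundary-layer analysis of the separated form $rsT = \int_{q_T(s)}^{1} du/u_T(1-u)$ with $q_T(s) = g_T(s)/T$, splitting the integral into a region where $u_T$ is bounded away from zero and a region where the quadratic Taylor approximation applies. Once such a bound is in place, the lower bound from Lemma \ref{fTconv} supplies $h_T(s) \le h(s) + o(1)$; combining this with the monotonicity of $h_T$ in $s$, a diagonal extraction along $\epsilon_n \downarrow 0$ yields a subsequential limit $h_\infty$ on $(0,1]$ satisfying the linear ODE together with $h_\infty(0^+) = 0$ (forced by the inequality $h_\infty \le h$ and the positivity of $h_\infty$). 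Uniqueness of the linear ODE with this boundary condition gives $h_\infty = h$, and hence $g_T(s) \to g(s)$ as required.
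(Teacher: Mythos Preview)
Your proposal correctly identifies that $g_T(s)=T(1-F_T(0,sT))$ satisfies the same approximate Riccati equation as $f_T(\phi,s)$ in Lemma~\ref{fTconv}, and that the only new obstacle is the uniform upper bound on $g_T$ (since $g_T(0)=T$ precludes a direct appeal to Jensen as in that lemma). Your lower bound via $f_T(\phi,s)\nearrow g_T(s)$ and Lemma~\ref{fTconv} is clean and correct.

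The paper's proof is structurally the same but handles the boundedness step quite differently. Rather than a boundary-layer analysis of the separated integral $rsT=\int_{q_T(s)}^{1}du/u_T(1-u)$, it reduces to the critical case by coupling: one perturbs $L$ to a critical offspring law $\bar L$ (shifting mass so that $\P_T[\bar L]=1$), couples the two trees so that $N_t\le \bar N_t$ when $m_T<1$ and $N_t\ge \bar N_t$ when $m_T>1$, and then invokes the classical Kolmogorov estimate for critical processes. In the supercritical case the comparison goes the wrong way for $\P(N_{sT}>0)$ directly, so the paper passes through the identity $\P_T(N_{sT}>0)=e^{r(m_T-1)sT}\,\Q^{1,sT}_T[1/N_{sT}]$ and compares the $\Q$-expectations instead. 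This is short and entirely probabilistic, and it avoids the analysis of $u_T(1-u)$ near its zero that your sketch would require.

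Your route is viable, but the passage through $h_T=1/g_T$, subsequential limits, and a diagonal extraction is more machinery than necessary: once you have boundedness of $g_T$ on $[\epsilon,1]$ for each $\epsilon>0$, the Gronwall argument of Lemma~\ref{fTconv} applies verbatim on $[\epsilon,1]$, and your lower bound $\liminf_T g_T(\epsilon)\ge g(\epsilon)$ together with the matching $\limsup$ (from the ODE comparison run forward from $\epsilon$) pins down the initial value without any compactness. The boundary-layer step itself (splitting the separated integral at some fixed $u_0$ and using the quadratic Taylor approximation on $[q_T(s),u_0]$) is the one place where your argument is only sketched; it can be made to work, but the paper's coupling sidesteps it entirely.
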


\begin{proof}
Note that $\P_T(N_t=0) = F_T(0,t)$, and so satisfies the Kolmogorov backwards equation \eqref{kolback}. Thus the proof of Lemma \ref{fTconv} works exactly the same for
\[T\P_T(N_{sT}>0) = T(1-\P_T(N_{sT}=0)) = T(1-F_T(0,sT)),\]
except for showing that $T\P_T(N_{sT}>0)$ is bounded---we can no longer apply Jensen's inequality.

Instead, we note that in the critical case $m_T=1$ the boundedness is well known (see for example \cite[Chapter III, Section 7, Lemma 2]{athreya_ney:branching_processes}). When $m_T\neq 1$, let $\bar p^{(T)}_0 = p^{(T)}_0$ and for $j\ge 1$,
\[\bar p^{(T)}_j = p^{(T)}_j + (1-m_T) 2^{-j}/j.\]
This gives us a new offspring distribution $\bar L$ that is critical (and has finite variance). We can then easily construct a coupling between $N_t$ and $\bar N_t$, where $\bar N_t$ is the number of particles in a branching process with offspring distribution $\bar L$, such that
\begin{itemize}
\item if $m_T<1$, then $N_t\le \bar N_t$ for all $t\ge 0$;
\item if $m_T>1$, then $N_t\ge \bar N_t$ for all $t\ge 0$.
\end{itemize}
In the case $m_T<1$, we have $T\P(N_{sT}>0) \le T\P(\bar N_{sT}>0)$, which is bounded. In the case $m_T>1$, we have
\[\P_T(N_{sT}>0) = \Q^{1,sT}_T\Big[\frac{\P_T[N_{sT}]}{N_{sT}}\Big] = e^{r(m_T-1)sT} \Q^{1,sT}_T\Big[\frac{1}{N_{sT}}\Big]\]
and similarly for $\bar N_{sT}$ with its equivalent measure $\bar\Q^{1,sT}_T$. Since $T\P(\bar N_{sT}>0)$ is bounded, we get that $T \bar\Q^{1,sT}_T[1/\bar N_{sT}]$ is bounded, but
\[ \Q^{1,sT}_T\Big[\frac{1}{N_{sT}}\Big] \le  \bar\Q^{1,sT}_T\Big[\frac{1}{\bar N_{sT}}\Big], \]
so $T\Q^{1,sT}_T[1/N_{sT}]$ is bounded and therefore $T\P_T(N_{sT}>0)$ is also bounded. This completes the proof.
\end{proof}

\subsection{Spine split times under $\Q^{k,T}_T$}

We now want to feed our calculations for moments and generating functions under $\P$ into understanding the spine split times under $\Q$, as in Lemma \ref{Qsplitdist}. Unfortunately the spine split times in non-binary cases do not have a joint density with respect to Lebesgue measure: for any $j=2,\ldots,k-1$, there is a positive probability that $\psi_j = \psi_{j-1}$. However we show that this probability tends to zero as $T\to\infty$, and therefore will not have an effect on our final answer.

Recall that $n_t$ is the number of distinct spine particles at time $t$, and $\rho^i_t$ is the number of marks carried by spine $i$ at time $t$.

\begin{lem}\label{splitsdistinct}
For any $i=1,\ldots,k-1$ and $t\in(0,1)$,
\[\Q^{k,T}_T\Big(n_{\psi_1} = 2, \, \rho^1_{\psi_1} = i \,\Big|\, \frac{\psi_1}{T} = t\Big) \to \frac{1}{k-1}.\]
\end{lem}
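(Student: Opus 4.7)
The plan is to compute the conditional probability $\Q^{k,T}_T(n_{\psi_1}=2,\rho^1_{\psi_1}=i \mid \psi_1/T = t)$ as the ratio of a rate for this specific event to the total density of $\psi_1$, and then pass to the limit. The rate is obtained from Corollary \ref{Qdesccor} with $n=2$: summing over the partitions of $\{1,\ldots,k\}$ realising the event $\{n_{\psi_1}=2,\rho^1_{\psi_1}=i\}$ (with the appropriate labelling convention for the two spines) gives
\[
\Q\bigl(\{n_{\psi_1}=2,\,\rho^1_{\psi_1}=i\}\cap\{\psi_1\in ds\}\bigr) = c_{k,i}\,\P_T[L(L-1)]\,\frac{M_i(T-s)M_{k-i}(T-s)}{M_k(T)}\,re^{(m_T-1)rs}\,ds,
\]
where $c_{k,i}$ is an explicit combinatorial count and $M_j(t):=\P_T[N_t^{(j)}]$. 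For the denominator, Lemma \ref{Qbots} gives $\Q(\psi_1>s)=(M_k(T-s)/M_k(T))e^{(m_T-1)rs}$; differentiating this and substituting for $M_k'(T-s)$ using Lemma \ref{genGWmom} produces
\[
f_{\psi_1}(s) = \frac{re^{(m_T-1)rs}}{M_k(T)}\biggl[(k-1)(m_T-1)M_k(T-s) + \sum_{j=2}^k \binom{k}{j}\P_T[L^{(j)}]\,M_{k+1-j}(T-s)\biggr].
\]

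After cancelling the common factor $re^{(m_T-1)rs}/M_k(T)$ and setting $s=tT$, I would pass to the limit via Lemma \ref{scaledPmoments}, which provides $M_j(T(1-t)) \sim T^{j-1}A_j(1-t)$ for explicit functions $A_j$. Combined with $\P_T[L(L-1)]\to\sigma^2$ and $T(m_T-1)\to\mu$, this shows that both the numerator and the $j=2$ and $(k-1)(m_T-1)M_k(T-s)$ contributions in the denominator are of order $T^{k-2}$. The higher-order $j\geq 3$ terms are negligible: indeed $\P_T[L^{(j)}]=o(T^{j-2})$ by the estimate \eqref{momcond} (valid after the reduction provided by Lemma \ref{momrelax}, which allows us to assume $L\leq J(T)=o(T)$), whereas $M_{k+1-j}(T-s)=O(T^{k-j})$.

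In the near-critical case $\mu\neq 0$, the explicit form $A_j(u)=(\sigma^2/(2\mu))^{j-1}j!\,e^{r\mu u}(e^{r\mu u}-1)^{j-1}$ exposes a common factor of $(\sigma^2/(2\mu))^{k-2}e^{2r\mu u}(e^{r\mu u}-1)^{k-2}$ shared between $A_i(u)A_{k-i}(u)$ in the numerator and the denominator sum $(k-1)\mu A_k(u)+\binom{k}{2}\sigma^2 A_{k-1}(u)$; the key simplification in the latter is the identity $(e^{r\mu u}-1)+1 = e^{r\mu u}$, which collapses the two terms into a single $e^{r\mu u}$ contribution. After this cancellation the ratio reduces to a ratio of purely combinatorial constants which, under the correct labelling convention for $c_{k,i}$, equals $1/(k-1)$, independent of both $t$ and $i$. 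The critical case $\mu=0$ is treated identically, with $A_j(u)=j!(r\sigma^2 u/2)^{j-1}$ playing the analogous role.

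The main obstacle is twofold. First, the combinatorial bookkeeping: one must identify the correct count $c_{k,i}$ of partitions consistent with the chosen convention for $\rho^1$, so that the factor of $i!(k-i)!$ appearing through $A_i A_{k-i}$ combines with $c_{k,i}$ to give a quantity independent of $i$. Second, the negligibility of the $j\geq 3$ denominator terms is what reduces the problem to the two leading contributions; this hinges crucially on the truncation argument in Lemma \ref{momrelax}, without which $\P_T[L^{(j)}]$ for $j\geq 3$ could in principle blow up fast enough to matter. Once these points are handled, the remaining algebra is a routine verification.
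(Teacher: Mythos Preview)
Your proposal is correct and follows essentially the same route as the paper's proof. Both arguments compute the conditional probability as a ratio, use Lemma~\ref{scaledPmoments} to identify the leading $T^{k-2}$ contributions in numerator and denominator, and invoke \eqref{momcond} to show that the $j\ge 3$ terms are negligible; the only cosmetic difference is that the paper works directly from the definition of $\Q$ via $\P$-expectations of $g_{k,T}$, whereas you package the numerator through Corollary~\ref{Qdesccor} and obtain the denominator by differentiating Lemma~\ref{Qbots} and substituting Lemma~\ref{genGWmom}.
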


This tells us two things: that with probability tending to $1$ we have exactly $2$ spines at the first spine split time; and that the number of marks following each of those spines is uniformly distributed on $1,\ldots,k-1$.

\begin{proof}
We work in the case $\mu\neq 0$; the case $\mu=0$ proceeds almost identically. From the definition of $\Q$,
\[\Q^{k,T}_T(n_{tT} =2, \, \rho^1_{tT}=i\, |\, \tau_\emptyset = tT,\, n_{tT} \ge 2) = \frac{\P_T[g_{k,T} \ind_{\{n_{tT} =2, \, \rho^1_{tT}=i\}} | \tau_\emptyset = tT]}{\P_T[g_{k,T} \ind_{\{n_{tT}\ge 2\}} | \tau_\emptyset = tT]}.\]
Let $P_T(j;b;a_1,\ldots,a_b)$ be the probability that at time $\tau_\emptyset$, $j$ children are born, $b$ of which are spines, carrying $a_1,\ldots,a_b$ marks. Then
\[\P_T[g_{k,T} \ind_{\{n_{tT}= b,\, \rho^1_{tT}=a_1\}} \,|\, \tau_\emptyset = tT] = \sum_{j=b}^\infty \sum_{a_2,\ldots,a_b} P_T(j;b;a_1,\ldots,a_b) j^k \prod_{i=1}^b \P_T[g_{a_i,T(1-t)}]\]
where the sum over $a_2,\ldots,a_b$ runs over $1,\ldots,k$ such that $a_1+\ldots+a_b=k$.
Now
\[P_T(j;b;a_1,\ldots,a_b) = p_j^{(T)} \binom{j}{b} \frac{k!}{a_1!\ldots a_b!} \frac{1}{j^k}\]
and from Lemma \ref{scaledPmoments}, in the case $\mu\neq 0$,
\[\P_T[N_{T(1-t)}^{(a_i)}] = T^{a_i-1} \Big( \frac{\sigma^2}{2\mu} \Big)^{a_i-1} a_i! e^{r\mu (1-t)} ( e^{r\mu (1-t)} - 1 )^{a_i-1} + o(T^{a_i-1}).\]
This gives us
\begin{multline*}
\P_T[g_{k,T} \ind_{\{n_{tT}=b,\, \rho^1_{tT}=a_1\}}\, |\, \tau_\emptyset = tT] \\
= \sum_{j=b}^\infty  \sum_{a_2,\ldots,a_b} p_j^{(T)} \binom{j}{b} k! T^{k-b}\Big(\frac{\sigma^2}{2\mu}\Big)^{k-b} e^{br\mu(1-t)}(e^{r\mu(1-t)}-1)^{k-b}(1+o(1)).
\end{multline*}
If $b=2$, then fixing $a_1=i$ also fixes $a_2$ since $a_2=k-a_1$, so the second sum disappears and we are left with
\begin{multline}
\P_T[g_{k,T} \ind_{\{n_{tT}= 2,\, \rho^1_{tT}=i\}} | \tau_\emptyset = tT] \\
= \sum_{j=2}^\infty p_j^{(T)} \binom{j}{2} k! T^{k-2}\Big(\frac{\sigma^2}{2\mu}\Big)^{k-2} e^{2r\mu(1-t)}(e^{r\mu(1-t)}-1)^{k-2}(1+o(1))\\
= \frac{\sigma^2}{2} k! T^{k-2}\Big(\frac{\sigma^2}{2\mu}\Big)^{k-2} e^{2r\mu(1-t)}(e^{r\mu(1-t)}-1)^{k-2}(1+o(1)).\label{l2case}
\end{multline}
Notice in particular that this does not depend on the value of $i$.

Next we bound the probability that there are at least three distinct spines at time $\psi_1$ by taking a sum over $a_1$ and then over $b\ge 3$. For each $b$, there are certainly at most $k^b$ possible values of $a_1,\ldots,a_b$ that sum to $k$. Thus we get
\[\P_T[g_{k,T} \ind_{\{n_{tT}\ge 3\}} \,|\, \tau_\emptyset = tT] \le \sum_{b=3}^\infty \P_T[L^{(b)}] \frac{k!}{b!} k^b T^{k-b}\Big(\frac{\sigma^2}{2\mu}\Big)^{k-b} e^{br\mu(1-t)}(e^{r\mu(1-t)}-1)^{k-b}(1+o(1)).\]
Recall that we have assumed \eqref{momcond} that $\P_T[L^{(b)}] = o(T^{b-2})$ for each $b\ge 3$, so
\begin{equation}\label{l3case}
\P_T[g_{k,T} \ind_{\{n_{tT}\ge 3\}} \,|\, \tau_\emptyset = tT] = o(T^{k-2}).
\end{equation}
Dividing \eqref{l3case} by \eqref{l2case}, we see that the probability that there are at least $3$ distinct spines at time $\psi_{1}$ tends to zero as $T\to\infty$; or equivalently, that the probability that there are exactly $2$ distinct spines tends to $1$. Then since the right-hand side of \eqref{l2case} does not depend on $i$, the distribution of $\rho_{\psi_1}$ must be asymptotically uniform.
\end{proof}

Combined with the symmetry lemma, the previous result tells us that with high probability the spine split times are distinct. We want to use this to show that away from $0$, the rescaled split times $\psi_1/T,\ldots,\psi_{k-1}/T$ have an asymptotic density. First we need a preparatory lemma, which will be helpful in describing the topology of our limiting tree as well as calculating the asymptotic density of the split times.

\begin{lem}\label{splitderiv}
For any $s\in(0,1]$ and $t\in(0,s)$,
\[\Q^{k,sT}_T\Big(\frac{\psi_1}{T}>t\Big) \to \Big(\frac{e^{r\mu(s-t)}-1}{e^{r\mu s}-1}\Big)^{k-1}\]
and
\[-\frac{\d}{\d t} \Q^{k,sT}_T\Big(\frac{\psi_1}{T} > t\Big) \to  (k-1)r\mu \frac{(e^{r\mu(s-t)}-1)^{k-2}}{(e^{r\mu s}-1)^{k-1}} e^{r\mu(s-t)}\]
as $T\to\infty$.
\end{lem}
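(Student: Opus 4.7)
The starting point is the closed form from Lemma \ref{Qbots}, which says
\[\Q^{k,sT}_T(\psi_1 > a) = \frac{M_k(sT-a)}{M_k(sT)}\, e^{(m_T-1)ra},\]
where I write $M_k(u) := \P_T[N_u^{(k)}]$. Setting $a=tT$ gives an exact expression for $\Q^{k,sT}_T(\psi_1/T > t)$, and after differentiating in $t$ an exact expression for the derivative as well. So the lemma reduces to feeding the moment asymptotics from Lemma \ref{scaledPmoments} into this closed form.

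For the first claim, applying Lemma \ref{scaledPmoments} in the regime $\mu\neq 0$ separately to $M_k((s-t)T)$ and $M_k(sT)$ gives both quantities at order $T^{k-1}$ with the common prefactor $(\sigma^2/(2\mu))^{k-1}k!$ cancelling in the ratio, leaving
\[\frac{M_k((s-t)T)}{M_k(sT)} \longrightarrow \frac{e^{r\mu(s-t)}(e^{r\mu(s-t)}-1)^{k-1}}{e^{r\mu s}(e^{r\mu s}-1)^{k-1}}.\]
Combined with $(m_T-1)rtT\to r\mu t$ and the identity $e^{r\mu(s-t)}e^{r\mu t}=e^{r\mu s}$, this yields the required limit.

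For the derivative, I differentiate the closed form in $t$ directly and substitute for $M'_k$ using Lemma \ref{genGWmom}, which after simplification produces
\[-\frac{\d}{\d t}\Q^{k,sT}_T(\psi_1/T > t) = \frac{T\bigl[(k-1)(m_T-1)rM_k((s-t)T) + r\sum_{j=2}^k\binom{k}{j}\P_T[L^{(j)}]M_{k+1-j}((s-t)T)\bigr]}{M_k(sT)}\,e^{(m_T-1)rtT}.\]
Only two contributions survive at the leading order $T^{k-2}$: the multiplier $(k-1)(m_T-1)r = (k-1)r\mu/T + o(1/T)$ acting on $M_k((s-t)T)=O(T^{k-1})$, and the $j=2$ term, for which $\P_T[L^{(2)}]\to\sigma^2$ and $M_{k-1}((s-t)T)=O(T^{k-2})$. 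For $j\ge 3$ the moment bound $\P_T[L^{(j)}] = o(T^{j-2})$ from \eqref{momcond}, combined with $M_{k+1-j}=O(T^{k-j})$, makes those contributions $o(T^{k-2})$, so they are negligible.

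Plugging the asymptotics of Lemma \ref{scaledPmoments} into the two surviving terms produces a common factor of $(k-1)rk!(\sigma^2/2)(\sigma^2/(2\mu))^{k-2}T^{k-2}e^{r\mu(s-t)}(e^{r\mu(s-t)}-1)^{k-2}$ multiplied by the bracket $(e^{r\mu(s-t)}-1) + 1 = e^{r\mu(s-t)}$. Dividing by $M_k(sT)\sim T^{k-1}(\sigma^2/(2\mu))^{k-1}k!e^{r\mu s}(e^{r\mu s}-1)^{k-1}$, multiplying by $Te^{r\mu t}$ (the limit of $Te^{(m_T-1)rtT}/T$), and collapsing the exponential via $e^{r\mu t+2r\mu(s-t)-r\mu s}=e^{r\mu(s-t)}$ delivers exactly $(k-1)r\mu\cdot e^{r\mu(s-t)}(e^{r\mu(s-t)}-1)^{k-2}/(e^{r\mu s}-1)^{k-1}$. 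I do not anticipate any serious obstacle: because the computation differentiates a closed form rather than a limit, no interchange-of-limits argument is needed, and the main technical point is simply checking that the $j\ge 3$ terms in the recursion from Lemma \ref{genGWmom} drop out, which follows immediately from the moment bound \eqref{momcond}.
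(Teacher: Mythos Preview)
Your proof is correct and follows essentially the same approach as the paper: both start from the closed form in Lemma~\ref{Qbots}, differentiate it exactly, substitute for $M_k'$ via Lemma~\ref{genGWmom}, discard the $j\ge 3$ terms using \eqref{momcond}, and finish with the asymptotics from Lemma~\ref{scaledPmoments}. The only (cosmetic) slip is the parenthetical ``the limit of $Te^{(m_T-1)rtT}/T$'', which should read ``the limit of $T e^{(m_T-1)rtT}$''; your subsequent algebra and final answer are correct regardless.
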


\begin{proof}
The first part of the proof follows easily by combining Lemmas \ref{Qbots} and \ref{scaledPmoments}. The second part is a more involved calculation. As in Lemma \ref{genGWmom}, we write $M_k(t) = \P_T[N_t^{(k)}]$. By Lemma \ref{Qbots},
\[\Q^{k,sT}_T(\psi_1 > tT) = \frac{\P[N^{(k)}_{T(s-t)}]}{\P[N^{(k)}_{sT}]} e^{(m_T-1)rtT} = \frac{M_k(T(s-t))}{M_k(sT)} e^{(m_T-1)rtT},\]
so
\begin{align*}
-\frac{\d}{\d t}\Q^{k,sT}_T(\psi_1 > tT) &= T\frac{M_k'(T(s-t))}{M_k(sT)} e^{(m_T-1)rtT} - T(m_T-1)r \frac{M_k(T(s-t))}{M_k(sT)} e^{(m_T-1)rtT}\\
&= \frac{T}{M_k(sT)}e^{(m_T-1)rtT} \big(M_k'(T(s-t)) - (m_T-1)r M_k(T(s-t))\big).
\end{align*}
Applying Lemma \ref{genGWmom}, this equals
\[\frac{T}{M_k(sT)}e^{(m_T-1)rtT} \bigg( (k-1)r(m_T-1) M_k(T(s-t)) + r\sum_{j=2}^k \binom{k}{j}\P_T[L^{(j)}]M_{k+1-j}(T(s-t))\bigg).\]
We now use Lemma \ref{scaledPmoments}. Since $\P_T[L^{(j)}] = o(T^{j-2})$ for all $j\ge 3$ (see \eqref{momcond}), the terms with $j\ge 3$ in the sum above do not contribute in the limit. We obtain
\begin{multline*}
\frac{Te^{r\mu t}}{(\frac{\sigma^2}{2\mu})^{k-1}k! e^{r\mu s}(e^{r\mu s}-1)^{k-1}T^{k-1}}\bigg[(k-1)r\mu\Big(\frac{\sigma^2}{2\mu}\Big)^{k-1}k!e^{r\mu(s-t)}(e^{r\mu(s-t)}-1)^{k-1}T^{k-2}\\ + r\frac{k(k-1)}{2}\sigma^2 \Big(\frac{\sigma^2}{2\mu}\Big)^{k-2}(k-1)!e^{r\mu(s-t)}(e^{r\mu(s-t)}-1)^{k-2}T^{k-2} + o(T^{k-2})\bigg].
\end{multline*}
Simplifying, this equals
\[\frac{1}{(e^{r\mu s}-1)^{k-1}}\bigg[(k-1)r\mu (e^{r\mu(s-t)}-1)^{k-1} + (k-1)r\mu (e^{r\mu(s-t)}-1)^{k-2} + o(1)\bigg],\]
so simplifying again we get
\[-\frac{\d}{\d t}\Q^{k,sT}_T(\psi_1 > tT) \to (k-1)r\mu \frac{(e^{r\mu(s-t)}-1)^{k-2}}{(e^{r\mu s}-1)^{k-1}} e^{r\mu(s-t)}.\qedhere\]
\end{proof}

Recall that $\mathcal H'$ is the $\sigma$-algebra containing topological information about which marks are following which spines, without information about the spine split times.

\begin{prop}\label{almostdensity}
The spine split times $\psi_1,\ldots,\psi_{k-1}$ are asymptotically independent of $\mathcal H'$ under $\Q^{k,T}_T$, and for any $0<s_1<t_1\le s_2<t_2\le \ldots \le s_{k-1}<t_{k-1}< 1$,
\[\lim_{T\to\infty} \hspace{-1mm} \Q^{k,T}_T\Big(\frac{\psi_1}{T}\in (s_1,t_1],\ldots, \frac{\psi_{k-1}}{T}\in (s_{k-1},t_{k-1}]\Big) \hspace{-0.5mm} = \hspace{-1mm} \int_{s_1}^{t_1}\hspace{-1.5mm}\cdots \int_{s_{k-1}}^{t_{k-1}} f_k(s_1',\ldots,s_{k-1}')\, \d s_{k-1}'\ldots\d s_1',\]
where
\[f_k(s_1,\ldots,s_{k-1}) = (k-1)! \Big(\frac{r\mu}{e^{r\mu}-1}\Big)^{k-1} \prod_{i=1}^{k-1} e^{r\mu(1-s_i)} \hspace{5mm} \hbox{ if } \mu\neq 0\]
and
\[f_k(s_1,\ldots,s_{k-1}) = (k-1)! \hspace{5mm} \hbox{ if } \mu=0.\]
\end{prop}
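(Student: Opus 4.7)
The argument will mirror the structure of the birth-death case (Lemma \ref{Qsplitdist}), but since we no longer have an exact density for the spine split times, we instead work asymptotically, using Lemma \ref{splitderiv} as the replacement for the exact computation in the birth-death case, and Lemma \ref{splitsdistinct} to rule out non-binary splits. The plan is to handle the case $\mu\neq 0$; the case $\mu=0$ is essentially identical after replacing $e^{r\mu(1-s)}-1$ by $r\mu(1-s)$, and sending $\mu\to 0$ in the final expressions. Throughout, I condition on the (high probability) event that the $i$-th split produces exactly two new blocks, and that the sizes of the blocks after the $i$-th split are some fixed $a_1,\dots,a_{i+1}$.

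First I would argue, by combining Lemma \ref{splitsdistinct} with the symmetry lemma (Lemma \ref{symm}), that with $\Q^{k,T}_T$-probability tending to $1$, all $k-1$ spine splits are binary and the split times $0 < \psi_1 < \dots < \psi_{k-1} < T$ are distinct, so no mass is lost by restricting to this event. On this event, given $\psi_{i-1}/T = s_{i-1}$ (with $s_0=0$) and current block sizes $a_1,\dots,a_i$, the symmetry lemma tells us that the next splits in each of the $i$ blocks are independent, and the first split inside the $j$-th block (at scaled time $t > s_{i-1}$) is distributed as the first split under $\Q^{a_j,(1-s_{i-1})T}_T$. Applying Lemma \ref{splitderiv} with $k$ replaced by $a_j$, with $s$ replaced by $1-s_{i-1}$, and with $t$ replaced by $t-s_{i-1}$ gives
\bal
\Q^{a_j,(1-s_{i-1})T}_T\!\left(\tfrac{\psi_1}{T} > t - s_{i-1}\right) &\to \left(\frac{e^{r\mu(1-t)}-1}{e^{r\mu(1-s_{i-1})}-1}\right)^{a_j-1},
\eal
and, after differentiating in $t$, the density of the first split in block $j$ converges to $(a_j-1)r\mu\, e^{r\mu(1-t)}(e^{r\mu(1-t)}-1)^{a_j-2}/(e^{r\mu(1-s_{i-1})}-1)^{a_j-1}$.

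Next I would compute, for a fixed partition, the asymptotic conditional density of $\psi_i/T$ at $s_i$ given $\psi_{i-1}/T = s_{i-1}$ by summing over which block splits (with the other blocks required to survive), exactly as in the proof of Lemma \ref{Qsplitdist}. The product of ``survival'' terms and the single ``split'' term telescopes: the factors in the denominator combine to $(e^{r\mu(1-s_{i-1})}-1)^{\sum_{l}(a_l-1)} = (e^{r\mu(1-s_{i-1})}-1)^{k-i}$, while the factors in the numerator combine to $(e^{r\mu(1-s_i)}-1)^{k-i-1}$. Summing $(a_j-1)$ over $j=1,\dots,i$ gives $k-i$, so the conditional density equals
\bal
(k-i)\, r\mu\, e^{r\mu(1-s_i)} \frac{(e^{r\mu(1-s_i)}-1)^{k-i-1}}{(e^{r\mu(1-s_{i-1})}-1)^{k-i}}.
\eal
Crucially this does not depend on $a_1,\dots,a_i$, which is what yields asymptotic independence of the split times from $\mathcal{H}'$. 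Multiplying these conditional densities over $i=1,\dots,k-1$ and setting $y_i := e^{r\mu(1-s_i)}-1$, the ratio of products $\prod_i y_i^{k-i-1}/\prod_i y_{i-1}^{k-i}$ telescopes to $1/y_0^{k-1} = (e^{r\mu}-1)^{-(k-1)}$, leaving $(k-1)!(r\mu)^{k-1}(e^{r\mu}-1)^{-(k-1)}\prod_{i=1}^{k-1} e^{r\mu(1-s_i)}$, which is $f_k(s_1,\dots,s_{k-1})$.

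The main technical hurdle is upgrading the pointwise convergence statements of Lemma \ref{splitderiv} to the convergence of the joint distribution on a product of intervals bounded away from the diagonal and from $\{0,1\}$. I would handle this by first converting each factor to an integrated version (bounding $\Q^{a_j,(1-s_{i-1})T}_T(\psi_1/T \in (u,u+\d u])$ by a difference of survival probabilities, for which Lemma \ref{splitderiv} gives locally uniform convergence), then iterating the conditioning via the symmetry lemma and dominated convergence. The asymptotic independence from $\mathcal{H}'$ follows because the density derived above depends on the partition only through the number of current blocks, so conditioning on any sequence of block sizes (having first verified, via Lemma \ref{splitsdistinct}, that the sizes are chosen from the uniform Kingman-type recursion with probability tending to $1$) yields the same limiting joint density for the split times.
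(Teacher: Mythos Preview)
Your proposal is correct and follows essentially the same route as the paper: restrict to the high-probability event of distinct (binary) splits via Lemma \ref{splitsdistinct} and the symmetry lemma, then compute the conditional density of $\psi_i/T$ given $\psi_{i-1}/T$ by applying Lemma \ref{splitderiv} to each block and summing over which block splits, observe that the answer $(k-i)r\mu e^{r\mu(1-s_i)}(e^{r\mu(1-s_i)}-1)^{k-i-1}/(e^{r\mu(1-s_{i-1})}-1)^{k-i}$ is independent of the block sizes (yielding asymptotic independence from $\mathcal H'$), and telescope the product over $i$ to obtain $f_k$. The paper's proof is organized as an induction on $j$ with the event $\Upsilon_j$ made explicit, but the computation and the key ideas are identical to yours.
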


\begin{proof}
This is a generalization of the proof of Lemma \ref{Qsplitdist}, and the reader may wish to compare the two. The main difference is that now there is a chance that spine splitting events result in more than one new spine particle (since branching events need not be binary), and therefore we need to take care to ensure that the split times $\psi_1,\ldots,\psi_{k-1}$ are distinct.

With this in mind, let $\Upsilon_j$ be the event that the first $j$ spine split times are distinct,
\[\Upsilon_j = \{\psi_i \neq \psi_{i-1} \,\,\forall i=2,\ldots, j\}.\]
We work by induction; fix $j\le k-1$, $T>0$, $0<s_1<\ldots<s_{j-1}<1$. Then for $s\ge s_{j-1}$,
\begin{align*}
&\Q\Big( \frac{\psi_j}{T} > s \,\Big|\, \frac{\psi_{j-1}}{T} = s_{j-1},\ldots, \frac{\psi_1}{T} = s_1\Big)\\
&= \Q\Big( \Upsilon_j,\, \frac{\psi_j}{T} > s \,\Big|\, \frac{\psi_{j-1}}{T} = s_{j-1},\ldots, \frac{\psi_1}{T} = s_1\Big)\\
&= \Q\Big( \frac{\psi_j}{T} > s \,\Big|\,\Upsilon_j,\, \frac{\psi_{j-1}}{T} = s_{j-1},\ldots, \frac{\psi_1}{T} = s_1\Big)\Q\Big( \Upsilon_j \,\Big|\, \frac{\psi_{j-1}}{T} = s_{j-1},\ldots, \frac{\psi_1}{T} = s_1\Big).
\end{align*}
By Lemma \ref{splitsdistinct} and the symmetry lemma,
\[\Q\Big( \Upsilon_j \,\Big|\, \frac{\psi_{j-1}}{T} = s_{j-1},\ldots, \frac{\psi_1}{T} = s_1\Big) \to 1\]
for all $0<s_1<\ldots<s_{j-1}<1$. We also set
\[D(s) = - \frac{\d}{\d s} \Q\Big( \frac{\psi_j}{T} > s \,\Big|\,\Upsilon_j,\, \frac{\psi_{j-1}}{T} = s_{j-1},\ldots, \frac{\psi_1}{T} = s_1\Big)\]
and claim that
\[D(s) = (k-j) r\mu e^{r\mu(1-s_j)} \frac{(e^{r\mu(1-s_j)}-1)^{k-j-1}}{(e^{r\mu(1-s_{j-1})}-1)^{k-j}} + o(1).\]
If this claim holds, then applying induction and taking a product over $j$ gives the result. In particular, since this does not depend on the number of marks following each spine, the split times are asymptotically independent of $\mathcal H'$.

To prove the claim, fix $a_1,\ldots,a_j$ such that $a_i\in\{1,\ldots,k\}$ for each $i$ and $a_1+\ldots+a_j = k$. Let $A_j$ be the event that after time $\psi_{j-1}$, we have $j$ distinct spine particles carrying $a_1,\ldots,a_j$ marks. Then by the symmetry lemma (letting $s_0=0$),
\[\Q^{k,T}_T\Big(\frac{\psi_j}{T} > s_j \,\Big|\,\Upsilon_j,\,A_j,\, \frac{\psi_{j-1}}{T} = s_{j-1}\Big) = \prod_{i=1}^j \Q^{a_i,T(1-s_{j-1})}_T(\psi_1/T > s_j-s_{j-1}).\]
Thus, differentiating, we have
\[D(s)= -\hspace{-1.5mm}\sum_{a_1,\ldots,a_j}\hspace{-1.5mm} P_{a_1,\ldots,a_j} \sum_{l=1}^j \Big(\frac{\d}{\d s} \Q^{a_i,T(1-s_{j-1})}_T(\tfrac{\psi_1}{T} > s-s_{j-1})\Big) \prod_{i\neq l} \Q^{a_i,T(1-s_{j-1})}_T(\tfrac{\psi_1}{T} > s-s_{j-1})\]
where $P_{a_1,\ldots,a_j}$ is the probability that $A_j$ occurs. Applying Lemma \ref{splitderiv} then establishes the claim and completes the proof.
\end{proof}

We recall now the partition $Z_0,Z_1,\ldots$ which contained the information about the marks following each of the distinct spine particles, without the information about the split times.

\begin{lem}\label{Qtopology}
The partition $Z_0,Z_1,\ldots$ has the following distribution under $\Q^{k,T}_T$:
\begin{itemize}
\item If $Z_i$ consists of $i+1$ blocks of sizes $a_1,\ldots,a_{i+1}$, then the $j$th block will split next with probability $\frac{a_j-1}{k-i-1}(1+o(1))$ for each $j=1,\ldots,i+1$.
\item When a block of size $a$ splits, it splits into two new blocks with probability $1+o(1)$, and the probability that these blocks have sizes $l$ and $a-l$ is $\frac{1}{a-1}(1+o(1))$ for each $l=1,\ldots,a-1$.
\end{itemize}
\end{lem}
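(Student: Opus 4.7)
The approach parallels the proof of Lemma \ref{QtopologyBD} in the binary case, but now relies on the near-critical asymptotics established in Lemmas \ref{splitsdistinct} and \ref{splitderiv} rather than exact formulas. Throughout, I will condition on $\psi_i = s_i T$ for some fixed $s_i \in (0,1)$ and on the block structure $Z_i$; since the target limits do not depend on $s_i$, this is without loss of generality.

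For the \emph{second} claim, by the symmetry lemma (Lemma \ref{symm}) the subtree rooted at the distinguished spine particle carrying a block of size $a$ evolves as if under $\Q^{a, T(1-s_i)}_T$. The proof of Lemma \ref{splitsdistinct} goes through verbatim when $T$ is replaced by the growing time scale $T(1-s_i)$: it uses only Lemma \ref{scaledPmoments} (applied at $s = (1-s_i)(1-t)$) and the moment bound \eqref{momcond}. Hence at this subtree's first spine splitting event, with probability $1+o(1)$ there are exactly two distinct spine particles, and conditional on a binary split the number of marks carried by the spine tracking the smallest mark is asymptotically uniform on $\{1,\ldots,a-1\}$, giving the required conclusion.

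For the \emph{first} claim, the symmetry lemma says the $i+1$ subtrees emanating from the distinguished spine particles are independent, with the $\ell$-th evolving under $\Q^{a_\ell, T(1-s_i)}_T$. Hence the probability $P_j$ that block $j$ is the first to undergo a spine event is
\[P_j = \int_0^{T(1-s_i)} \Big({-\tfrac{\d}{\d t}}\Q^{a_j, T(1-s_i)}_T(\psi_1 > t)\Big)\prod_{\ell \ne j}\Q^{a_\ell, T(1-s_i)}_T(\psi_1 > t)\,\d t.\]
Rescaling $t = t' T$ and invoking Lemma \ref{splitderiv} with $s = 1-s_i$ to control both the density factor and the $i$ survival factors, together with the identity $\sum_{\ell=1}^{i+1}(a_\ell-1) = k-i-1$, the integrand converges to an expression that factorises as $(a_j-1)$ times a function of $t'$ that does not depend on $j$. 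Explicitly,
\[P_j \to (a_j-1)\int_0^{1-s_i}\frac{r\mu\,e^{r\mu(1-s_i-t')}}{e^{r\mu(1-s_i-t')}-1}\bigg(\frac{e^{r\mu(1-s_i-t')}-1}{e^{r\mu(1-s_i)}-1}\bigg)^{k-i-1}\,\d t'.\]
(The $\mu=0$ case is handled identically using the $\mu=0$ form of Lemma \ref{splitderiv}, or by continuity in $\mu$.) The substitution $v = e^{r\mu(1-s_i-t')}-1$ evaluates the integral to $\frac{1}{k-i-1}$, yielding $P_j \to \frac{a_j-1}{k-i-1}$ as required.

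The one technical point to discharge is the interchange of limit and integral in the derivation of $P_j$. This is routine dominated convergence: the survival factors are bounded by $1$, the derivative factors converge pointwise by Lemma \ref{splitderiv}, and the explicit limiting integrand is continuous on $[0, 1-s_i]$ (with a zero of order $a_j-2$ at the upper endpoint), which provides a uniform bound on compact subintervals away from the endpoints; the contribution near the endpoints is negligible since the survival factors vanish polynomially at $t' = 1-s_i$ and the derivative factor is integrable near $t' = 0$.
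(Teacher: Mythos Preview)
Your proof is correct and follows essentially the same route as the paper's: condition on $\psi_i = sT$, use the symmetry lemma to reduce to independent subtrees, and apply Lemma~\ref{splitderiv} to the resulting integral for the first part and Lemma~\ref{splitsdistinct} for the second. The only noteworthy difference is that you evaluate the common integral $\int_0^{1-s_i}\cdots\,\d t'$ directly via the substitution $v = e^{r\mu(1-s_i-t')}-1$, whereas the paper sidesteps the computation by observing that $\sum_j P_j = 1$ forces the integral to equal $\frac{1}{k-i-1}$; both are valid, and your explicit evaluation is a fine alternative (and also makes the $\mu=0$ case transparent).
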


\begin{proof}
Suppose that we are given $\psi_{i}=sT$. For the first part, by the symmetry lemma, the probability that the $j$th block splits next is
\begin{multline*}
\int_0^{T(1-s)} \Q^{a_j,T(1-s)}_T\Big(\frac{\psi_1}{T}\in\d t\Big) \prod_{l\neq j} \Q^{a_l,T(1-s)}_T\Big(\frac{\psi_1}{T} > t\Big)\\
= \int_0^{T(1-s)}\Big(-\frac{\d}{\d t} \Q^{a_j,T(1-s)}_T\Big(\frac{\psi_1}{T}>t\Big)\Big) \prod_{l\neq j} \Q^{a_l,T(1-s)}_T\Big(\frac{\psi_1}{T} > t\Big)\,\d t.
\end{multline*}
By Lemma \ref{splitderiv}, this converges as $T\to\infty$ to
\[(a_j-1)r\mu\int_0^{T(1-s)} e^{r\mu (1-s-t)} \frac{e^{(r\mu(1-s-t)}-1)^{k-i}}{e^{(r\mu(1-s)}-1)^{k-i-1}} \d t.\]
Since the integrand does not depend on $a_j$, and we know the sum of the above quantity over $j=1,\ldots,i+1$ must converge to $1$ (since one of the blocks must split first), we get
\[r\mu\int_0^{T(1-s)} e^{r\mu (1-s-t)} \frac{e^{(r\mu(1-s-t)}-1)^{k-b-1}}{e^{(r\mu(1-s)}-1)^{k-b}} \d t \to \frac{1}{k-i-1}\]
and therefore the probability that the $j$th block splits next converges to $\frac{a_j-1}{k-i-1}$ as claimed.

The second part follows immediately from Lemma \ref{splitsdistinct}.
\end{proof}

\subsection{Asymptotics for $N_T$ under $\Q^{k,T}_T$}

We now apply our asymptotics for $u_T(F_T(e^{-z},sT))$ to approximate the distribution of $N_T$ when the split times are known.

\begin{lem}\label{lemmgfconv}
For any $\phi\ge 0$ and $0\le s_1\le \ldots\le s_{k-1}\le 1$,
\[\Q^{k,T}_T\Big[e^{-\phi\tilde N_T/T}\,\Big|\,\G^k_T,\,\frac{\psi_1}{T} = s_1, \ldots, \frac{\psi_{k-1}}{T} = s_k\Big] \to \begin{cases} \displaystyle\prod_{i=0}^{k-1} \Big(1+\frac{\sigma^2}{2\mu}\phi(e^{r\mu(1-s_i)}-1)\Big)^{-2} & \hbox{ if } \mu \neq 0\\[4mm]
\displaystyle \prod_{i=0}^{k-1} \Big(1+\frac{r\sigma^2}{2}\phi(1-s_i)\Big)^{-2} & \hbox{ if } \mu = 0\end{cases}\]
almost surely as $T\to\infty$.
\end{lem}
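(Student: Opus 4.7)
\textbf{Proof plan for Lemma \ref{lemmgfconv}.} The starting point is Proposition \ref{Qmgf_gen_prop}, which (adapted to the $T$-dependent offspring distribution by writing $m_T,u_T,F_T$) gives the exact identity
\[\Q^{k,T}_T\big[e^{-\phi\tilde N_T/T}\,\big|\,\G^k_T\big] = \prod_{i=0}^{k-1}\left(e^{-r(m_T-1)(T-\psi_i)}\,\frac{u_T\!\left(F_T(e^{-\phi/T},T-\psi_i)\right)}{u_T(e^{-\phi/T})}\right)\]
$\Q^{k,T}_T$-almost surely. Crucially, the right-hand side depends on $\G^k_T$ only through the split times $\psi_1,\ldots,\psi_{k-1}$, so further conditioning on $\psi_i/T = s_i$ simply replaces $\psi_i$ by $s_i T$ on the right. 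The whole lemma thus reduces to a deterministic asymptotic calculation on each factor as $T\to\infty$.

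For the exponential prefactor, $(m_T-1)T = \mu + o(1)$, so $e^{-r(m_T-1)T(1-s_i)}\to e^{-r\mu(1-s_i)}$. For the ratio, I multiply numerator and denominator by $T^2$ and apply the second convergence of Lemma \ref{fTconv}: for each $s\in[0,1]$,
\[T^2 u_T\!\left(F_T(e^{-\phi/T},sT)\right) \longrightarrow -\mu f(\phi,s) + \tfrac{\sigma^2}{2}f(\phi,s)^2,\]
with $f(\phi,0)=\phi$ giving in particular $T^2 u_T(e^{-\phi/T})\to -\mu\phi+\tfrac{\sigma^2}{2}\phi^2$. Applying this to both numerator and denominator with $s=1-s_i$ gives
\[\frac{u_T\!\left(F_T(e^{-\phi/T},T(1-s_i))\right)}{u_T(e^{-\phi/T})} \longrightarrow \frac{f(\phi,1-s_i)}{\phi}\cdot\frac{-\mu + \tfrac{\sigma^2}{2}f(\phi,1-s_i)}{-\mu + \tfrac{\sigma^2}{2}\phi}.\]

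The remaining task is purely algebraic simplification, treating the cases $\mu\neq 0$ and $\mu=0$ separately. For $\mu\neq 0$, writing $g=f(\phi,1-s_i)=\phi e^{r\mu(1-s_i)}/\bigl(1+\tfrac{\sigma^2}{2\mu}\phi(e^{r\mu(1-s_i)}-1)\bigr)$, I compute
\[-\mu+\tfrac{\sigma^2}{2}g \;=\; \frac{-\mu+\tfrac{\sigma^2}{2}\phi}{1+\tfrac{\sigma^2}{2\mu}\phi(e^{r\mu(1-s_i)}-1)},\]
which cancels the denominator $-\mu+\tfrac{\sigma^2}{2}\phi$ cleanly. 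Combined with $g/\phi = e^{r\mu(1-s_i)}/\bigl(1+\tfrac{\sigma^2}{2\mu}\phi(e^{r\mu(1-s_i)}-1)\bigr)$, the limit of the $i$th factor becomes $e^{r\mu(1-s_i)}\bigl(1+\tfrac{\sigma^2}{2\mu}\phi(e^{r\mu(1-s_i)}-1)\bigr)^{-2}$, and the exponential prefactor exactly cancels the leading $e^{r\mu(1-s_i)}$. Taking the product over $i=0,\ldots,k-1$ yields the claimed $\mu\neq 0$ formula. For $\mu=0$, the $-\mu$ terms drop out and $f(\phi,s)=\phi/(1+r\sigma^2\phi s/2)$, so the ratio reduces directly to $(1+r\sigma^2\phi(1-s_i)/2)^{-2}$ while the prefactor tends to $1$, which gives the second displayed limit.

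The only subtlety lies in the uniformity of convergence in Lemma \ref{fTconv}, which ensures that the limit holds simultaneously for all the split times rather than just pointwise; this is already built into that lemma, so passing to the product of finitely many factors is immediate. The main obstacle was really Lemma \ref{fTconv} itself (already proved); given that input, the present lemma is a bookkeeping calculation.
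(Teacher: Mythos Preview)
Your proof is correct and follows essentially the same route as the paper: apply Proposition \ref{Qmgf_gen_prop} to reduce to a product over the split times, use Lemma \ref{fTconv} (with $s=0$ for the denominator via $u_T(e^{-\phi/T})=u_T(F_T(e^{-\phi/T},0))$) to pass to the limit in each factor, then simplify the resulting expression in $f(\phi,1-s_i)$. Your remark about uniformity is harmless but not actually needed here, since the $s_i$ are fixed and there are only finitely many factors.
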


\begin{proof}
From Proposition \ref{Qmgf_gen_prop} we know that
\begin{multline*}
\Q^{k,T}_T\hspace{-0.4mm}\Big[e^{-\phi\tilde N_T/T}\hspace{0.2mm}\Big|\hspace{0.2mm}\G^k_T,\,\frac{\psi_1}{T} = s_1, \ldots, \frac{\psi_{k-1}}{T} = s_k\Big] \\
= \prod_{i=0}^{k-1}\Big(e^{-r(m_T-1)T(1-s_i)}\frac{u_T(F_T(e^{-\phi/T},T(1-s_i)))}{u_T(e^{-\phi/T})}\Big).
\end{multline*}
Of course $(m_T-1)T\to\mu$, and Lemma \ref{fTconv} tells us that
\[T^2 u_T(F_T(e^{-\phi/T},T(1-s_i))) \to -\mu f(\phi,1-s_i) + \frac{\sigma^2}{2}f(\phi,1-s_i)^2\]
where
\[f(\phi,s) = \frac{ \phi e^{ \mu r s }}{ 1 + \frac{\sigma^2}{2\mu}  \phi ( e^{ \mu r s } - 1 )} \hspace{2mm} \hbox{ if }\mu\neq 0 \hspace{8mm} \hbox{ or } \hspace{8mm} f(\phi,s) = \frac{ \phi }{ 1 + \frac{r\sigma^2}{2}  \phi s} \hspace{2mm} \hbox{ if }\mu= 0.\]
Noting that $u_T(e^{-\phi/T}) = u_T(F_T(e^{-\phi/T},0))$, we see that
\[e^{-r(m_T-1)T(1-s_i)}\frac{u_T(F_T(e^{-\phi/T},T(1-s_i)))}{u_T(e^{-\phi/T})} \longrightarrow e^{-r\mu(1-s_i)}\frac{-\mu f(\phi,1-s_i) + \frac{\sigma^2}{2}f(\phi,1-s_i)^2}{-\mu f(\phi,0) + \frac{\sigma^2}{2}f(\phi,0)^2}.\]
Now, in the case $\mu\neq 0$, we simply write out
\begin{align*}
-\mu f(\phi,1-s_i) + \frac{\sigma^2}{2}f(\phi,1-s_i)^2 &= \frac{-\mu\phi e^{r\mu(1-s_i)}(1 + \frac{\sigma^2}{2\mu}  \phi ( e^{ \mu r (1-s_i) } - 1 )) + \frac{\sigma^2}{2}\phi^2 e^{2r\mu(1-s_i)}}{(1 + \frac{\sigma^2}{2\mu}  \phi ( e^{ \mu r (1-s_i) } - 1 ))^2}\\
&= \frac{-\mu\phi e^{r\mu(1-s_i)} + \frac{\sigma^2}{2}\phi^2 e^{r\mu(1-s_i)}}{(1 + \frac{\sigma^2}{2\mu}  \phi ( e^{ \mu r (1-s_i) } - 1 ))^2},
\end{align*}
so since $-\mu f(\phi,0) + \frac{\sigma^2}{2}f(\phi,0)^2 = -\mu \phi + \sigma^2\phi^2/2$, we have
\[e^{-r\mu(1-s_i)}\frac{-\mu f(\phi,1-s_i) + \frac{\sigma^2}{2}f(\phi,1-s_i)^2}{-\mu f(\phi,0) + \frac{\sigma^2}{2}f(\phi,0)^2} = \Big(1 + \frac{\sigma^2}{2\mu}  \phi ( e^{ \mu r (1-s_i) } - 1 )\Big)^{-2}.\]
The result in the case $\mu = 0$ is very similar.
\end{proof}

\begin{lem}\label{noresidue}
For any $\phi\ge 0$,
\[\Q^{k,T}_T[e^{-\phi (N_T-k)/T} | \G^k_T] = \Q^{k,T}_T[e^{-\phi \tilde N_T / T} | \G^k_T](1+o(1))\]
$\Q^{k,T}_T$-almost surely.
\end{lem}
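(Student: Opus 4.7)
The approach is to write $N_T - k = \tilde N_T + D_T$, where $D_T$ counts all particles alive at time $T$ descended from (or equal to) one of the offspring at the $k-1$ spine split events that did not receive any marks. These are precisely the \emph{residue} particles together with their progeny, whose contribution is invisible to the Poisson-off-the-spine bookkeeping used in the proof of Lemma \ref{Qmgf_gen}. Using the elementary bounds $0 \le 1 - e^{-\phi D_T/T} \le \phi D_T/T$ and $e^{-\phi \tilde N_T/T} \le 1$, one obtains
\[
0 \;\le\; \Q^{k,T}_T\big[e^{-\phi \tilde N_T/T}\,\big|\,\G^k_T\big] - \Q^{k,T}_T\big[e^{-\phi(N_T-k)/T}\,\big|\,\G^k_T\big] \;\le\; \frac{\phi}{T}\,\Q^{k,T}_T\big[D_T\,\big|\,\G^k_T\big].
\]
Since Lemma \ref{lemmgfconv} gives a strictly positive a.s.\ limit for $\Q^{k,T}_T[e^{-\phi \tilde N_T/T}\,|\,\G^k_T]$, it is enough to show that $T^{-1}\Q^{k,T}_T[D_T\,|\,\G^k_T] = o(1)$ a.s.; the additive $o(1)$ error then translates into the required multiplicative $1+o(1)$ factor.

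To control $\Q^{k,T}_T[D_T\,|\,\G^k_T]$, decompose $D_T = \sum_{i=1}^{k-1}\sum_{b=1}^{B_i} Y_{b,i}$, where $B_i$ is the number of residues produced at the $i$-th spine split event and $Y_{b,i}$ is the time-$T$ size of the subtree rooted at the $b$-th such residue (born at time $\psi_i$). By \eqref{QgivenG} each such subtree evolves under $\Q^{k,T}_T$ exactly as under $\P_T$, independently of the spine structure, so
\[
\Q^{k,T}_T\big[Y_{b,i}\,\big|\,\G^k_T,B_i\big] \;=\; \P_T[N_{T-\psi_i}] \;=\; e^{(m_T-1)r(T-\psi_i)},
\]
which is $O(1)$ uniformly in $\psi_i\in[0,T]$ by near-criticality. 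For the residue counts, Lemma \ref{Qdescription} tells us that given $\G^k_T$ (which records the number of marks $n_i$ at the $i$-th split), the total offspring $L_i$ at that event has conditional law proportional to $p_l l^{(n_i)}$ for $l \ge n_i$; using the identity $\P_T[L^{(n+1)}] = \P_T[(L-n)L^{(n)}]$ one then finds
\[
\Q^{k,T}_T\big[B_i\,\big|\,\G^k_T\big] \;=\; \Q^{k,T}_T\big[L_i - n_i\,\big|\,\G^k_T\big] \;=\; \frac{\P_T[L^{(n_i+1)}]}{\P_T[L^{(n_i)}]} \;\le\; J(T),
\]
where the last inequality uses the deterministic truncation $L \le J(T) = o(T)$ supplied by Lemma \ref{momrelax}. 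Combining yields $\Q^{k,T}_T[D_T\,|\,\G^k_T] \le (k-1)\,J(T)\cdot O(1) = o(T)$, indeed deterministically given $\G^k_T$, which closes the reduction.

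The only real conceptual obstacle is the correct identification of $D_T$: the proof of Lemma \ref{Qmgf_gen} only tracks subtrees launched by off-the-spine Poisson births and implicitly assumes there are no residues, which is automatic for the binary branching of Section \ref{BDsec} but not in general. Once the residue population is singled out and recognised as a sum of independent Galton--Watson subtrees under $\P_T$, the size-biased offspring law from Lemma \ref{Qdescription}, the deterministic truncation from Lemma \ref{momrelax}, and the near-critical bound on $\P_T[N_t]$ together deliver the required estimate.
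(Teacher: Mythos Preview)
Your argument is correct and close in spirit to the paper's, but the route differs. The paper uses the conditional independence of $D_T$ and $\tilde N_T$ given $\G^k_T$ to factor
\[
\Q^{k,T}_T\big[e^{-\phi(N_T-k)/T}\,\big|\,\G^k_T\big] \;=\; \Q^{k,T}_T\big[e^{-\phi\tilde N_T/T}\,\big|\,\G^k_T\big]\cdot \Q^{k,T}_T\big[e^{-\phi D_T/T}\,\big|\,\G^k_T\big],
\]
and then shows the second factor tends to $1$ by the crude bound $D_T\le \sum_{i=1}^{k-1}\sum_{b=1}^{J(T)} N^{(b,i)}_{T-\psi_i}$ (each residue subtree is $\P_T$-distributed) together with Jensen's inequality. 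This yields the multiplicative $(1+o(1))$ directly, without any appeal to Lemma~\ref{lemmgfconv}. Your approach instead bounds the \emph{difference} of the two conditional MGFs by $\tfrac{\phi}{T}\Q^{k,T}_T[D_T\,|\,\G^k_T]$ and controls this via a first-moment computation using the $n$-fold size-biased offspring law from Lemma~\ref{Qdescription}; this is a perfectly valid alternative and arguably more informative, since it makes explicit that $\Q^{k,T}_T[B_i\,|\,\G^k_T]=\P_T[L^{(n_i+1)}]/\P_T[L^{(n_i)}]\le J(T)$.

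There is one small gap. To turn your deterministic additive $o(1)$ into a multiplicative $(1+o(1))$ you divide by $\Q^{k,T}_T[e^{-\phi\tilde N_T/T}\,|\,\G^k_T]$, and you justify this by citing the strictly positive limit in Lemma~\ref{lemmgfconv}. But that limit is stated for fixed values $s_i=\psi_i/T$, and you need a lower bound that is uniform over realizations of $\G^k_T$. This does hold---the convergence in Lemma~\ref{fTconv} is uniform in $s\in[0,1]$, so the product in Lemma~\ref{lemmgfconv} converges uniformly to a limit bounded below by $\prod_{i=0}^{k-1}(1+\tfrac{\sigma^2}{2|\mu|}\phi|e^{r\mu}-1|)^{-2}>0$---but you should say so. A cleaner fix is to note that you are already implicitly using the conditional independence of $D_T$ and $\tilde N_T$ given $\G^k_T$ (it underlies your identity $\Q^{k,T}_T[Y_{b,i}\,|\,\G^k_T,B_i]=\P_T[N_{T-\psi_i}]$); invoking it once more to factor the MGF, as the paper does, gives $\Q^{k,T}_T[e^{-\phi D_T/T}\,|\,\G^k_T]\ge 1-\tfrac{\phi}{T}\Q^{k,T}_T[D_T\,|\,\G^k_T]=1-o(1)$ and avoids the detour through Lemma~\ref{lemmgfconv} entirely.
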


\begin{proof}
Recall that $\tilde N_T$ is the number of ordinary particles alive at time $T$, and there are ($\Q$-almost surely) $k$ spines at time $T$. All other particles are residue particles. Given $\G^k_T$, the number of residue particles is independent of the number of ordinary particles; therefore it suffices to show that
\[\Q^{k,T}[e^{-\phi (N_T - k - \tilde N_T)/T} | \G^k_T] \to 1.\]

Recall that we assumed that there exists a deterministic function $J(T)=o(T)$ such that our offspring distribution satisfies $\P_T(L=j)=0$ for all $j\ge J(T)$. Since $\Q^{k,T}$ is absolutely continuous with respect to $\P_T$, we also have $\Q^{k,T}(L=j)=0$ for all $j\ge J(T)$.

Since non-spine particles behave exactly as under $\P_T$, the number of descendants at time $T$ of any one particle born at time $\psi_i$ is $\P_T[e^{-zN_{T-s}}]|_{s=\psi_i}$. Therefore
\[\Q^{k,T}_T[e^{-\phi(N_T - k - \tilde N_T)/T}|\G^k_T] \ge \prod_{i=1}^{k-1} \P_T[e^{-\phi N_{T-s}/T}]^{J(T)}\Big|_{s=\psi_i}.\]
By Jensen's inequality, for any $t\in[0,T]$,
\[\P_T[e^{-\phi N_t/T}] \ge \exp(-\phi\P_T[N_t]/T) \ge \exp(-\phi e^{r(m_T-1)T}/T),\]
and thus
\[\Q^{k,T}_T[e^{-\phi(N_T - k - \tilde N_T)/T}|\G^k_T] \ge \P_T[\exp(-\phi e^{r(m_T-1)T} J(T) / T)]^{k-1}.\]
Since $J(T)=o(T)$, the right-hand side converges to $1$ as $T\to\infty$, and of course
\[\Q^{k,T}_T[e^{-\phi(N_T - k - \tilde N_T)/T}|\G^k_T]\le 1,\]
so we are done.
\end{proof}

Recall that $\Upsilon_{k-1}$ is the event that all the split times are distinct, and $\mathcal H'$ is the $\sigma$-algebra that contains topological information about which marks follow which spines without information about the spine split times. Let $(\tilde \psi_1,\ldots,\tilde \psi_{k-1})$ be a uniform random permutation of $(\psi_1,\ldots,\psi_{k-1})$. We combine several of our results to prove the following.

\begin{lem}\label{combineasymp}
Fix $s_1,\ldots,s_{k-1}\in(0,1)$. Let
\[f(\xi_T) = \ind_{\{\tilde \psi_1/T > s_1,\ldots, \tilde\psi_{k-1}/T > s_{k-1},\Upsilon_{k-1}\}\cap H}\]
where $H\in\mathcal H'$. There exists a constant $h$ such that $\Q^{k,T}_T(H)\to h$ as $T\to\infty$. For any $\phi\ge0$, if $\mu\neq 0$ then
\[\lim_{T\to\infty} \Q^{k,T}_T[e^{-\phi(N_T-k)/T}f(\xi_T)] = \Big(\frac{1}{e^{r\mu}-1}\Big)^{k-1} \frac{h}{(1+\frac{\sigma^2}{2\mu}\phi(e^{r\mu}-1))^2} \prod_{i=1}^{k-1} \frac{e^{r\mu(1-s_i)}-1}{1+\frac{\sigma^2}{2\mu}\phi (e^{r\mu(1-s_i)}-1)}\]
and if $\mu=0$ then
\[\lim_{T\to\infty} \Q^{k,T}_T[e^{-\phi(N_T-k)/T}f(\xi_T)] = \frac{h}{(1+r\sigma^2\phi/2)^2} \prod_{i=1}^{k-1} \frac{1-s_i}{1+r\sigma^2\phi (1-s_i)/2}.\]
\end{lem}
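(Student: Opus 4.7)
The plan is to condition on the $\sigma$-algebra $\G^k_T$, which contains both the spine split times $\psi_1, \ldots, \psi_{k-1}$ and the topological information in $\mathcal H'$. Since $f(\xi_T)$ is $\G^k_T$-measurable, the expectation becomes
$$\Q^{k,T}_T\big[e^{-\phi(N_T-k)/T} f(\xi_T)\big] = \Q^{k,T}_T\big[f(\xi_T)\, \Q^{k,T}_T[e^{-\phi(N_T-k)/T} \mid \G^k_T]\big].$$
Combining Lemma \ref{noresidue} with Lemma \ref{lemmgfconv} (and noting that the underlying convergence in Lemma \ref{fTconv} is uniform in $s\in[0,1]$), the inner conditional expectation converges \emph{uniformly} over $(s_1,\ldots,s_{k-1})\in[0,1]^{k-1}$ to
$$g(s_1,\ldots,s_{k-1}) = \prod_{i=0}^{k-1}\Big(1+\tfrac{\sigma^2}{2\mu}\phi(e^{r\mu(1-s_i)}-1)\Big)^{-2}$$
in the case $\mu\neq 0$, with $s_0:=0$ (the $\mu=0$ formula is analogous). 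Since $g$ is symmetric in its arguments and $\Upsilon_{k-1}$ is the event that the $\psi_i$ are distinct, on this event we may replace $(\psi_1,\ldots,\psi_{k-1})$ by the uniform permutation $(\tilde\psi_1,\ldots,\tilde\psi_{k-1})$ inside $g$ without altering its value.

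Next, Lemma \ref{Qtopology} furnishes an explicit limiting law for the topology of the spine tree under $\Q^{k,T}_T$, from which $\Q^{k,T}_T(H)\to h$ follows immediately for any $H\in\mathcal H'$; this establishes the first claim. Proposition \ref{almostdensity} gives asymptotic independence of the rescaled split times from $\mathcal H'$, with the unordered permutation $(\tilde\psi_1/T,\ldots,\tilde\psi_{k-1}/T)$ having limit density $\big(\tfrac{r\mu}{e^{r\mu}-1}\big)^{k-1}\prod_{i=1}^{k-1} e^{r\mu(1-s_i)}$ on $[0,1]^{k-1}$ (the $(k-1)!$ in the statement of Proposition \ref{almostdensity} is absorbed upon passing from the ordered density to the symmetric unordered density). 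Combining these ingredients with the uniform convergence of $g_T$ yields
$$\lim_{T\to\infty}\Q^{k,T}_T\big[e^{-\phi(N_T-k)/T} f(\xi_T)\big] = h\int_{s_1}^1\cdots\int_{s_{k-1}}^1 g(t_1,\ldots,t_{k-1})\Big(\tfrac{r\mu}{e^{r\mu}-1}\Big)^{k-1}\prod_{i=1}^{k-1} e^{r\mu(1-t_i)}\, dt.$$
Pulling the $i=0$ factor of $g$ outside the integral leaves a product of one-dimensional integrals; each is evaluated by the substitution $u=\tfrac{\sigma^2}{2\mu}\phi(e^{r\mu(1-t_i)}-1)$ to give $\tfrac{1}{r\mu}\cdot \tfrac{e^{r\mu(1-s_i)}-1}{1+\frac{\sigma^2}{2\mu}\phi(e^{r\mu(1-s_i)}-1)}$. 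The $(k-1)$ factors of $r\mu$ cancel against those in the prefactor, producing exactly the stated identity. The $\mu=0$ case proceeds identically with density $1$ in place of the exponential density and the substitution $v=1+\tfrac{r\sigma^2}{2}\phi(1-t_i)$ delivering the antiderivative.

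The main technical point is the interchange of limit and integration. This is precisely where the \emph{uniform} convergence of $g_T$ to $g$ on the compact cube $[0,1]^{k-1}$ is essential: together with the fact that $f(\xi_T)$ is bounded by $1$ and depends on the split times only via closed half-space constraints whose boundaries carry no mass under the limit density, the joint weak convergence supplied by Proposition \ref{almostdensity} and Lemma \ref{Qtopology}, combined with bounded convergence, closes the argument.
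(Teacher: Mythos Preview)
Your proof is correct and follows essentially the same route as the paper: condition on $\G^k_T$, replace $N_T-k$ by $\tilde N_T$ via Lemma~\ref{noresidue}, apply Lemma~\ref{lemmgfconv} for the conditional moment generating function, use Proposition~\ref{almostdensity} (plus Lemma~\ref{unordering}) and Lemma~\ref{Qtopology} for the asymptotic joint law of the split times and topology, and then evaluate the resulting product of one-dimensional integrals (the paper quotes Lemma~\ref{integrateout} rather than writing the substitution explicitly). Your treatment is arguably cleaner on one point: you make explicit that the uniformity in $s$ in Lemma~\ref{fTconv} propagates to uniform convergence of the conditional expectation on $[0,1]^{k-1}$, which is exactly what licenses the limit--integral interchange; the paper folds this into successive ``$(1+o(1))$'' factors without spelling it out.
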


\begin{proof}
The fact that $\Q^{k,T}_T(H)$ converges follows from Lemma \ref{Qtopology}. Now, by Proposition \ref{almostdensity} and Lemma \ref{unordering}, in the case $\mu\neq 0$,
\begin{multline*}
\Q^{k,T}_T[e^{-\phi(N_T-k)/T}f(\xi_T)]\\
\hspace{-30mm}= (1+o(1))\int_{s_1}^1\cdots \int_{s_{k-1}}^1 \Big(\frac{r\mu}{e^{r\mu}-1}\Big)^{k-1} \bigg(\prod_{i=1}^{k-1} e^{r\mu(1-s_i')}\bigg)\\
\hspace{20mm}\cdot\Q^{k,T}_T\Big[\ind_H \Q^{k,T}_T\Big[e^{-\phi(N_T-k)/T}\,\Big|\,\G^k_T,\, \frac{\tilde\psi_1}{T}=s_1',\ldots,\frac{\tilde\psi_1}{T}=s_{k-1}'\Big]\Big].
\end{multline*}
By Lemma \ref{noresidue}, we may replace $N_T-k$ with $\tilde N_T$; and then by Lemma \ref{lemmgfconv}, the above equals
\begin{multline*}
(1+o(1)) \int_{s_1}^1\cdots \int_{s_{k-1}}^1 \Big(\frac{r\mu}{e^{r\mu}-1}\Big)^{k-1}\bigg(\prod_{i=1}^{k-1} e^{r\mu(1-s_i')}\bigg)\\
\cdot \Q^{k,T}_T(H)\prod_{j=0}^{k-1}\Big(1+\frac{\sigma^2}{2\mu}\phi(e^{r\mu(1-s_j')}-1)\Big)^{-2}\,\d s_{k-1}'\ldots\d s_1'
\end{multline*}
almost surely. After some small rearrangements this becomes
\[(1+o(1)) \Big(\frac{r\mu}{e^{r\mu}-1}\Big)^{k-1} \frac{h}{(1+\frac{\sigma^2}{2\mu}\phi(e^{r\mu}-1))^2}\prod_{i=1}^{k-1} \int_{s_i}^1 \frac{e^{r\mu(1-s_i')}}{(1+\frac{\sigma^2}{2\mu}\phi(e^{r\mu(1-s_i')}-1))^2}\,\d s_i',\]
and then applying the second part of Lemma \ref{integrateout} gives the result. The case $\mu=0$ is similar.
\end{proof}

\subsection{The final steps in the proof of Theorem \ref{nearcritthm}}

\begin{proof}[Proof of Theorem \ref{nearcritthm}]
By Proposition \ref{firstprop}, for any measurable $f$,
\[\P_T\bigg[\frac{1}{N_T^{(k)}} \sum_{u\in \Nc_T^{(k)}} f(u) \, \bigg| \, N_T \ge k\bigg] = \frac{\P_T[N_T^{(k)}]}{\P_T(N_T\ge k)(k-1)!} \int_0^\infty (e^z-1)^{k-1} \Q^{k,T}_T\Big[e^{-zN_T} f(\xi_T)\Big]\,\d z.\]
Substituting $z = \phi/T$ and rearranging, we get
\[\frac{1}{(k-1)!}\frac{\P_T[N_T^{(k)}]}{T^{k-1}}\frac{1}{T\P_T(N_T\ge k)}\int_0^\infty (T(1-e^{-\phi/T}))^{k-1}\Q^{k,T}_T[e^{-\phi(N_T-k)/T} f(\xi_T)]\, \d\phi.\]
By Lemma \ref{scaledPmoments},
\[\frac{\P_T[N_T^{(k)}]}{T^{k-1}} \to \Big(\frac{\sigma^2}{2\mu}\Big)^{k-1} k! e^{r\mu}(e^{r\mu}-1)^{k-1} \hspace{2mm} \hbox{ if } \mu\neq 0 \hspace{4mm} \hbox{ and } \hspace{4mm}  \frac{\P_T[N_T^{(k)}]}{T^{k-1}} \to \Big(\frac{r\sigma^2}{2}\Big)^{k-1} k! \hspace{2mm} \hbox{ if } \mu= 0,\]
and by Lemma \ref{scaledPext},
\[T\P_T(N_T\ge k) \to \frac{2\mu e^{r\mu}}{\sigma^2(e^{r\mu}-1)} \hspace{2mm} \hbox{ if } \mu\neq 0 \hspace{4mm} \hbox{ and } \hspace{4mm} T\P_T(N_T\ge k) \to \frac{2}{r\sigma^2} \hspace{2mm} \hbox{ if } \mu\neq 0.\]
Therefore
\[\frac{1}{(k-1)!}\frac{\P_T[N_T^{(k)}]}{T^{k-1}}\frac{1}{T\P_T(N_T\ge k)} \to k\Big(\frac{\sigma^2}{2\mu}\Big)^k(e^{r\mu}-1)^k\hspace{5mm} \hbox{ if } \mu\neq 0\]
and
\[\frac{1}{(k-1)!}\frac{\P_T[N_T^{(k)}]}{T^{k-1}}\frac{1}{T\P_T(N_T\ge k)} \to k\Big(\frac{r\sigma^2}{2}\Big)^k\hspace{5mm} \hbox{ if } \mu= 0.\]
We deduce that
\begin{multline}\label{keyfreln}
\P_T\bigg[\frac{1}{N_T^{(k)}} \sum_{u\in \Nc_T^{(k)}} f(u) \, \bigg| \, N_T \ge k\bigg]\\
= (1+o(1))k\Big(\frac{\sigma^2}{2\mu}\Big)^k(e^{r\mu}-1)^k \int_0^\infty (T(1-e^{-\phi/T}))^{k-1}\Q^{k,T}_T[e^{-\phi(N_T-k)/T} f(\xi_T)]\, \d\phi
\end{multline}
when $\mu\neq 0$, and when $\mu=0$
\begin{multline*}
\P_T\bigg[\frac{1}{N_T^{(k)}} \sum_{u\in \Nc_T^{(k)}} f(u) \, \bigg| \, N_T \ge k\bigg]\\
= (1+o(1))k\Big(\frac{r\sigma^2}{2}\Big)^k \int_0^\infty (T(1-e^{-\phi/T}))^{k-1}\Q^{k,T}_T[e^{-\phi(N_T-k)/T} f(\xi_T)]\, \d\phi.
\end{multline*}

Our aim now is to choose $f$ as in Lemma \ref{combineasymp}, and apply dominated convergence and Lemma \ref{combineasymp} to complete the proof. We do this only in the case $\mu\neq 0$; the case $\mu=0$ is very similar. Let
\[A(\phi,T) = (T(1-e^{-\phi/T}))^{k-1} \Q^{k,T}_T[e^{-\phi(N_T-k)/T}f(\xi_T)]\]
and
\[B(\phi,T) = (T(1-e^{-\phi/T}))^{k-1} \Q^{k,T}_T[e^{-\phi(N_T-k)/T}].\]
Then $0\le A(\phi,T)\le B(\phi,T)$ for all $\phi$, $T$. By letting $s_1,\ldots,s_{k-1}\downarrow 0$ in Lemma \ref{combineasymp}, we get that
\begin{align*}
\lim_{T\to\infty} \Q^{k,T}_T[e^{-\phi(N_T-k)/T}\ind_{\Upsilon_{k-1}}] &= \Big(\frac{1}{e^{r\mu}-1}\Big)^{k-1} \frac{1}{(1+\frac{\sigma^2}{2\mu}\phi(e^{r\mu}-1))^2} \Big(\frac{e^{r\mu}-1}{1+\frac{\sigma^2}{2\mu}\phi(e^{r\mu}-1)}\Big)^{k-1}\\
&= \frac{1}{(1+\frac{\sigma^2}{2\mu}\phi(e^{r\mu}-1))^{k+1}}.
\end{align*}
Also, by Lemma \ref{splitsdistinct},
\[\Q^{k,T}_T[e^{-\phi(N_T-k)/T}\ind_{\Upsilon_{k-1}^c}] \le \Q^{k,T}_T(\Upsilon_{k-1}^c) \to 0,\]
so
\[\lim_{T\to\infty} B(\phi,T) = \phi^{k-1} \frac{1}{(1+\frac{\sigma^2}{2\mu}\phi(e^{r\mu}-1))^{k+1}}.\]
On the other hand, by \eqref{keyfreln} with $f\equiv 1$,
\[1 = \P_T\bigg[\frac{1}{N_T^{(k)}} \sum_{u\in \Nc_T^{(k)}} 1 \, \bigg| \, N_T \ge k\bigg] = (1+o(1))k\Big(\frac{\sigma^2}{2\mu}\Big)^k(e^{r\mu}-1)^k \int_0^\infty B(\phi,T)\, \d\phi,\]
so
\[\lim_{T\to\infty}\int_0^\infty B(\phi,T)\, \d\phi = \frac{1}{k}\Big(\frac{2\mu}{\sigma^2(e^{r\mu}-1)}\Big)^k;\]
and as a result we see that
\[\lim_{T\to\infty}\int_0^\infty B(\phi,T)\, \d\phi = \int_0^\infty \lim_{T\to\infty} B(\phi,T)\,\d\phi.\]
Therefore, by dominated convergence,
\begin{equation}\label{domcon}
\lim_{T\to\infty}\int_0^\infty A(\phi,T)\, \d\phi = \int_0^\infty \lim_{T\to\infty} A(\phi,T)\,\d\phi.
\end{equation}

Lemma \ref{combineasymp} tells us that
\[A(\phi,T)\to \Big(\frac{\phi}{e^{r\mu}-1}\Big)^{k-1}\frac{h}{(1+\frac{\sigma^2}{2\mu}\phi(e^{r\mu}-1))^2} \prod_{i=1}^{k-1} \frac{e^{r\mu(1-s_i)}-1}{1+\frac{\sigma^2}{2\mu}\phi(e^{r\mu(1-s_i)}-1)}\]
where $h=\lim_{T\to\infty} \Q^{k,T}_T(H)$, so by \eqref{keyfreln} and \eqref{domcon},
\begin{align*}
&\lim_{T\to\infty} \P_T\bigg[\frac{1}{N_T^{(k)}} \sum_{u\in \Nc_T^{(k)}} f(u) \, \bigg| \, N_T \ge k\bigg]\\
&\hspace{15mm} = k\Big(\frac{\sigma^2}{2\mu}\Big)^k(e^{r\mu}-1) \int_0^\infty \phi^{k-1} \frac{h}{(1+\frac{\sigma^2}{2\mu}\phi(e^{r\mu}-1))^2} \prod_{i=1}^{k-1} \frac{e^{r\mu(1-s_i)}-1}{1+\frac{\sigma^2}{2\mu}\phi(e^{r\mu(1-s_i)}-1)}\, \d\phi\\
&\hspace{15mm} = \frac{k\sigma^2}{2\mu}(e^{r\mu}-1) \int_0^\infty \frac{h}{(1+\frac{\sigma^2}{2\mu}\phi(e^{r\mu}-1))^2} \prod_{i=1}^{k-1} \Big(1-\frac{1}{1+\frac{\sigma^2}{2\mu}\phi(e^{r\mu(1-s_i)}-1)}\Big)\, \d\phi.
\end{align*}
Note that, for any $\mu\neq 0$, we have $\frac{\sigma^2}{2\mu}(e^{r\mu(1-s_i)}-1)>0$ for all $i$, so we can apply the first part of Lemma \ref{partialfrac2} to get
\begin{multline*}
\lim_{T\to\infty} \P_T\bigg[\frac{1}{N_T^{(k)}} \sum_{u\in \Nc_T^{(k)}} f(u) \, \bigg| \, N_T \ge k\bigg]\\
= hk\bigg(\prod_{i=1}^{k-1} \frac{e_i}{e_i-e_0}\bigg) + hke_0\sum_{j=1}^{k-1} \frac{e_j}{(e_j-e_0)^2}\bigg(\prod_{\substack{i=1\\ i\neq j}}^{k-1} \frac{e_i}{e_i-e_j}\bigg)\log \frac{e_0}{e_j}
\end{multline*}
where $e_j = \frac{\sigma^2}{2\mu}(e^{r\mu(1-s_j)}-1)$ for each $j$ (including $j=0$, where $s_0=0$).
\end{proof}

\subsection{Proof of construction of the scaling limit}

In this section we prove the results of Section \ref{limitconstructionsec}.

\begin{proof}[Proof of Theorem \ref{maxconcrit}]
Of course $\P(M_k\le \theta) = \P(X_1\le \theta)^k$, so $\P(M_k\in\d \theta) = k\P(X_1\in\d \theta) \P(X_1\le \theta)^{k-1}$. Thus
\begin{align*}
&\P(T_1\in\d s_1,\ldots, T_{k-1}\in\d s_{k-1})\\
&= \int_0^\infty \P(M_k\in \d \theta) \P(T_1\in\d s_1,\ldots, T_{k-1}\in\d s_{k-1} | M_k=\theta)\\
&= \int_0^\infty \hspace{-1mm}k\P(X_1\in\d \theta) \P(X_1\le \theta)^{k-1} \P(1\hspace{-0.5mm}-\hspace{-0.5mm}\tfrac{X_1}{\theta}\hspace{-0.5mm} \in\hspace{-0.5mm} \d s_1,\ldots,1\hspace{-0.5mm}-\hspace{-0.5mm}\tfrac{X_{k-1}}{\theta} \hspace{-0.5mm}\in\hspace{-0.5mm} \d s_{k-1} | X_1\le \theta,\ldots, X_{k-1}\le \theta)\\
&= \int_0^\infty \frac{k}{(1+\theta)^2} \P(X_1\le \theta)^{k-1} \prod_{i=1}^{k-1} \P\Big(1-\frac{X_i}{\theta}\in\d s_i \Big| X_i\le \theta\Big) \,\d \theta\\
&= \int_0^\infty \frac{k}{(1+\theta)^2} \prod_{i=1}^{k-1} \P\Big(1-\frac{X_i}{\theta}\in\d s_i \Big) \,\d \theta\\
&= \int_0^\infty \frac{k}{(1+\theta)^2} \bigg(\prod_{i=1}^{k-1} \frac{\theta}{(1+\theta(1-s_i))^2}\, \d s_i\bigg) \,\d \theta.
\end{align*}
This is exactly the density that we saw for $(\tilde \S^k_1,\ldots,\tilde\S^k_{k-1})$ at the start of Section \ref{chatsec}.

To see that our tree has the same topology as claimed, start by assigning $k$ marks to the top of the tallest line, i.e.~at the point $(U_I,1-T_I)$. Colour this line green. Next consider the second tallest line, which we colour blue; let its index be $J$. Since it is positioned uniformly on the horizontal axis, the number $L$ of shorter lines to its left is uniformly distributed on $\{0,\ldots,k-2\}$, and so is the number $k-2-L$ to its right. Suppose without loss of generality that the blue line is to the left of the green line, and assign $L+1$ marks to the top of the blue line, i.e.~at point $(U_J,1-T_J)$, and $k-(L+1)$ marks to the point $(U_I,1-T_J)$. (If the blue line were to the right of the green line, we would assign $k-(L+1)$ marks to $(U_J,1-T_J)$ and $L+1$ marks to $(U_I,1-T_J)$.) Thus the number of marks assigned to the top of the blue line is uniform on $\{1,\ldots,k-1\}$.

Moving downwards through our picture, the next horizontal line to appear will correspond to the third-tallest vertical line. We ask which of the two coloured lines this next horizontal line will join to, which corresponds to which of the branches in the tree will split next. By our construction, the event that the third tallest line joins to the blue line (given that the blue line is to the left of the green line) is exactly the event that the third tallest line is to the left of the blue line. Since the lengths of the branches are independent and identically distributed, this has probability $L/(k-2)$. Furthermore, observe that the position of the third tallest line, conditionally on it falling to the left of the blue line (respectively to the right), is uniformly distributed on $(0,U_J)$ (respectively $(U_J,1)$).

More generally, once we have seen the $n$ tallest vertical lines, and assigned $a_i$ marks to line $i$ for each line $i$ that we have seen, the $(n+1)$st tallest vertical line has probability $(a_i-1)/(k-n)$ of joining line $i$; and the number of marks this new line gets is uniformly distributed on $\{1,\ldots,a_i-1\}$. This corresponds exactly to the topology outlined in Theorem \ref{nearcritthm}.
\end{proof}

\begin{figure}[h!]
  \centering
   \includegraphics[height=11cm]{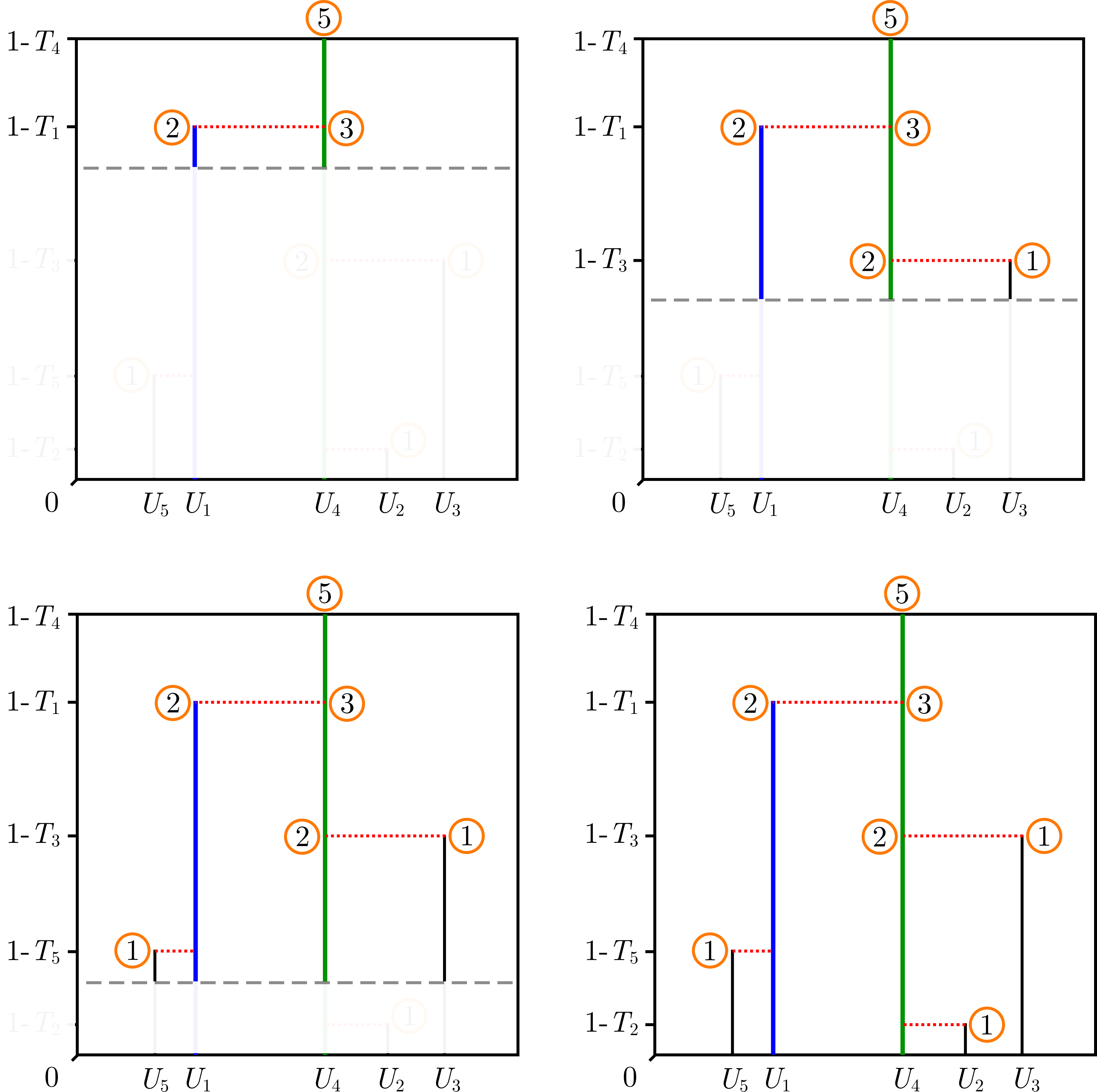}
   \vspace{0mm}
  \caption{\small{Constructing the tree by moving downwards through our picture. The number of marks are shown in circles. The as yet ``unseen'' parts of the tree are left blank. Here $k=5$, $I=4$ and $J=1$.}}\label{aldProoffig}
  \end{figure}

\begin{proof}[Proof of Theorem \ref{maxconnoncrit}]
Rather than doing the calculation directly, this follows from Theorem \ref{maxconcrit} by noting that making the substitution
\[t_i = \frac{e^{r\mu}-e^{r\mu(1-s_i)}}{e^{r\mu}-1}\]
in the density $f_k$ recovers the critical case from the non-critical.
\end{proof}

\appendix

\section{}

Here we gather some results that are easy but still require proofs. We begin with the calculation of some integrals.

\begin{lem}\label{partialfrac2}
Suppose that $k\ge 2$ and $e_0,\ldots,e_{k-1}\in(0,\infty)$ with $e_i\neq e_j$ for any $i\neq j$. Then
\begin{multline*}
\int_0^\infty \frac{1}{(1+\theta e_0)^2} \prod_{j=1}^{k-1} \Big(1-\frac{1}{1+\theta e_j}\Big)\d \theta\\
=\frac{1}{e_0}\bigg(\prod_{i=1}^{k-1} \frac{e_i}{e_i-e_0}\bigg) + \sum_{j=1}^{k-1} \frac{e_j}{(e_j-e_0)^2}\bigg(\prod_{\substack{i=1\\ i\neq j}}^{k-1} \frac{e_i}{e_i-e_j}\bigg)\log\Big(\frac{e_0}{e_j}\Big)
\end{multline*}
and
\begin{multline*}
\int_0^1 \frac{1}{(1+\theta e_0)^2} \prod_{j=1}^{k-1} \Big(1-\frac{1}{1+\theta e_j}\Big)\d \theta\\
= \frac{1}{1+e_0}\prod_{i=1}^{k-1}\frac{e_i}{e_i-e_0} + \sum_{j=1}^{k-1} \frac{e_j}{(e_j-e_0)^2}\bigg(\prod_{\substack{i=1\\ i\neq j}}^{k-1} \frac{e_i}{e_i-e_j}\bigg)\log\Big(\frac{1+e_0}{1+e_j}\Big).
\end{multline*}
\end{lem}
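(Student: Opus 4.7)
The plan is to handle both integrals through a single partial fraction decomposition. Noting that $1 - \frac{1}{1+\theta e_j} = \frac{\theta e_j}{1+\theta e_j}$, the integrand rewrites as the rational function
\[ \frac{\theta^{k-1}\prod_{j=1}^{k-1}e_j}{(1+\theta e_0)^2\prod_{j=1}^{k-1}(1+\theta e_j)}. \]
Since the $e_i$ are positive and distinct, this admits a partial fraction decomposition of the form
\[ \frac{A_0}{(1+\theta e_0)^2} + \frac{B_0}{1+\theta e_0} + \sum_{j=1}^{k-1}\frac{A_j}{1+\theta e_j}. \]

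The coefficients are computed by the usual residue-style recipe. Multiplying by $(1+\theta e_0)^2$ and setting $\theta = -1/e_0$ gives $A_0 = \prod_{i=1}^{k-1}\frac{e_i}{e_i-e_0}$; multiplying by $(1+\theta e_j)$, setting $\theta = -1/e_j$, and simplifying the resulting products of signs yields $A_j = -\frac{e_j^2}{(e_j-e_0)^2}\prod_{i\neq j}\frac{e_i}{e_i-e_j}$. The coefficient $B_0$ is then pinned down by the observation that the original integrand decays like $1/(e_0\theta)^2$ at infinity, so the $O(1/\theta)$ part of the simple-pole terms must vanish, giving the constraint $\frac{B_0}{e_0} + \sum_{j=1}^{k-1}\frac{A_j}{e_j} = 0$.

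For the first identity, the double-pole term integrates cleanly: $\int_0^\infty A_0/(1+\theta e_0)^2\,d\theta = A_0/e_0$, which reproduces the first term on the right-hand side. The simple-pole terms are individually non-integrable on $[0,\infty)$, so we regularise by integrating on $[0,M]$ to obtain
\[ \frac{B_0}{e_0}\log(1+Me_0) + \sum_{j=1}^{k-1}\frac{A_j}{e_j}\log(1+Me_j), \]
and let $M\to\infty$: the $\log M$ contributions cancel thanks to the constraint on $B_0$, leaving $\sum_{j=1}^{k-1}\frac{A_j}{e_j}\log(e_j/e_0)$. Substituting the explicit value of $A_j/e_j$ and using $\log(e_j/e_0) = -\log(e_0/e_j)$ recovers the stated formula. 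The second identity is handled in exactly the same way but requires no regularisation: integrating the decomposition directly over $[0,1]$ gives $A_0/(1+e_0)$ for the double-pole contribution, and the simple-pole part is $\frac{B_0}{e_0}\log(1+e_0) + \sum_j \frac{A_j}{e_j}\log(1+e_j)$, which collapses via the same $B_0$-constraint to $\sum_j \frac{A_j}{e_j}\log\bigl((1+e_j)/(1+e_0)\bigr)$.

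The only subtlety is the non-absolute convergence of the partial fraction expansion on $[0,\infty)$; the regularisation argument is transparent once the asymptotic constraint on $B_0$ is noted. Everything else is routine bookkeeping with signs and telescoping products.
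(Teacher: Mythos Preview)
Your proof is correct and follows essentially the same route as the paper: a partial fraction decomposition of the integrand into $\frac{A_0}{(1+\theta e_0)^2} + \frac{B_0}{1+\theta e_0} + \sum_j \frac{A_j}{1+\theta e_j}$, computation of the coefficients by evaluating at the poles, and a regularisation on $[0,M]$ for the first integral. The only cosmetic difference is that the paper deduces the relation $\frac{B_0}{e_0} + \sum_j \frac{A_j}{e_j} = 0$ from the finiteness of the integral (forcing the $\log M$ term to vanish), whereas you obtain it a priori from the $O(1/\theta^2)$ decay of the integrand; these are equivalent observations.
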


\begin{proof}
First note that since $e_j\in(0,\infty)$ for each $j$,
\[0\le \int_0^\infty \frac{1}{(1+\theta e_0)^2} \prod_{j=1}^{k-1} \Big(1-\frac{1}{1+\theta e_j}\Big)\d \theta \le \int_0^\infty \frac{1}{(1+\theta e_0)^2} \d \theta < \infty.\]

Expanding the product, we have 
\[\prod_{j=1}^{k-1} \Big(1-\frac{1}{1+\theta e_j}\Big) = 1 - \sum_{j_1=1}^{k-1}\frac{1}{1+\theta e_{j_1}} + \sum_{j_1<j_2}\prod_{i=1}^2\frac{1}{1+\theta e_{j_i}} + \ldots + \sum_{j_1<\ldots<j_{k-1}}\prod_{i=1}^{k-1}\frac{1}{1+\theta e_{j_i}}.\]
We view this as one sum in which all terms are products of factors of the form $\frac{1}{1+\theta e_i}$ for some $i$; therefore, using partial fractions, the whole thing can be written as a sum of terms of the form $\frac{c_i}{1+\theta e_i}$ for some coefficients $c_i$ which do not depend on $\theta$. As a result, our entire integrand may be written in the form
\begin{equation}\label{infpfrac}
\frac{1}{(1+\theta e_0)^2} \prod_{j=1}^{k-1} \Big(1-\frac{1}{1+\theta e_j}\Big) = \frac{a_1}{1+\theta e_0} + \frac{a_2}{(1+\theta e_0)^2} + \sum_{j=1}^{k-1} \frac{b_j}{1+\theta e_j}
\end{equation}
for some coefficients $a_1$, $a_2$ and $b_1,\ldots,b_{k-1}$ that do not depend on $\theta$.

Setting $\theta=-1/e_0$, we see that necessarily
\[a_2 = \prod_{i=1}^{k-1} \frac{e_i}{e_i-e_0}.\]
Setting $\theta = -1/e_j$ for $j=1,\ldots,k-1$, some elementary calculations reveal that
\[b_j = -\frac{e_j^2}{(e_j-e_0)^2}\prod_{\substack{i=1\\ i\neq j}}^{k-1} \frac{e_i}{e_i-e_j}.\]
For $a_1$, we observe that
\[\frac{\d}{\d\theta} \prod_{j=1}^{k-1} \Big(1-\frac{1}{1+\theta e_j}\Big) \to a_1 e_0 \hspace{3mm} \hbox{ as } \theta \to -1/e_0,\]
and then after some simple calculations we get
\[a_1 = e_0 \bigg(\prod_{i=1}^{k-1} \frac{e_i}{e_i-e_0}\bigg)\sum_{j=1}^{k-1} \frac{1}{e_j-e_0}.\]

Integrating \eqref{infpfrac},
\begin{multline*}
\int_0^\infty \frac{1}{(1+\theta e_0)^2} \prod_{j=1}^{k-1} \Big(1-\frac{1}{1+\theta e_j}\Big)\d \theta = \lim_{M\to\infty}\Big( \frac{a_1}{e_0}\log(1+ M e_0) + \frac{a_2}{e_0} + \sum_{j=1}^{k-1} \frac{b_j}{e_j}\log(1+M e_j)\Big)\\
= \frac{a_1}{e_0}\log e_0 + \frac{a_2}{e_0} + \sum_{j=1}^{k-1} \frac{b_j}{e_j}\log e_j + \lim_{M\to\infty}\Big(\frac{a_1}{e_0} + \sum_{j=1}^{k-1} \frac{b_j}{e_j}\Big)\log M.
\end{multline*}
Since we have already checked that the integral is finite, the last term must be zero; this leaves
\begin{align*}
\int_0^\infty \frac{1}{(1+\theta e_0)^2} \prod_{j=1}^{k-1} \Big(1-\frac{1}{1+\theta e_j}\Big)\d \theta &= \frac{a_1}{e_0}\log e_0 + \frac{a_2}{e_0} + \sum_{j=1}^{k-1} \frac{b_j}{e_j}\log e_j\\
&= -\sum_{j=1}^{k-1} \frac{b_j}{e_j}\log e_0 + \frac{a_2}{e_0} + \sum_{j=1}^{k-1} \frac{b_j}{e_j}\log e_j
\end{align*}
which is the first part of the result. The second part follows similarly by integrating \eqref{infpfrac} over $(0,1)$ instead of $(0,\infty)$.
\end{proof}

\begin{lem}\label{integrateout}
For any $0\le s_j \le T$, $\beta\neq \alpha$ and $y\in[0,1]$,
\[\int_{s_j}^{T} \frac{e^{(\beta-\alpha)(T-s)}}{(\beta(1-y)e^{(\beta-\alpha)(T-s)}+\beta y-\alpha)^2} \d s = \frac{e^{(\beta-\alpha)(T-s_j)}-1}{(\beta-\alpha)^2(\beta(1-y)e^{(\beta-\alpha)(T-s_j)}+ \beta y - \alpha)}.\]
Also, for any $0\le s_i \le 1$, $r,\sigma >0$ and $\mu\neq 0$,
\[\int_{s_i}^1 \frac{e^{r\mu (1-s)}}{(1+\frac{\sigma^2}{2\mu}\phi(e^{r\mu(1-s)}-1))^2} \d s = \frac{1}{r\mu}\Big(\frac{e^{r\mu(1-s_i)}-1}{1 + \frac{\sigma^2}{2\mu}\phi (e^{r\mu(1-s_i)}-1)}\Big).\]
\end{lem}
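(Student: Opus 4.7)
The statement is a pair of explicit antiderivative identities, so the plan is to prove both by the fundamental theorem of calculus: I will verify each by differentiating the proposed right-hand side with respect to the lower limit of integration and checking that the result is the negative of the integrand (noting also that both right-hand sides vanish when the lower limit equals the upper limit, which is immediate since $e^{(\beta-\alpha)(T-T)}-1 = 0$ and $e^{r\mu(1-1)}-1=0$). This avoids writing down a substitution explicitly, but the underlying idea is of course the substitution $u = e^{(\beta-\alpha)(T-s)}$ (respectively $u = e^{r\mu(1-s)}$) followed by the partial-fraction/elementary integration of $du/v^2$ where $v$ is affine in $u$.

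For the first identity, set $w(s) = e^{(\beta-\alpha)(T-s)}$, so $w'(s) = -(\beta-\alpha)w(s)$, and let
\[F(s) = \frac{w(s)-1}{(\beta-\alpha)^2(\beta(1-y)w(s)+\beta y - \alpha)}.\]
Then the quotient rule gives a numerator proportional to $w'(s)\bigl[\beta(1-y)w(s)+\beta y - \alpha - (w(s)-1)\beta(1-y)\bigr] = w'(s)\bigl[\beta(1-y)+\beta y - \alpha\bigr] = w'(s)(\beta-\alpha)$, so one factor of $\beta-\alpha$ cancels. Substituting $w'(s)=-(\beta-\alpha)w(s)$ then cancels the remaining $(\beta-\alpha)$ factor, and we obtain
\[-F'(s) = \frac{w(s)}{(\beta(1-y)w(s)+\beta y - \alpha)^2} = \frac{e^{(\beta-\alpha)(T-s)}}{(\beta(1-y)e^{(\beta-\alpha)(T-s)}+\beta y - \alpha)^2},\]
which matches the integrand, so $\int_{s_j}^T (-F'(s))\,\d s = F(s_j) - F(T) = F(s_j)$, as claimed.

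The second identity is the formal analogue with the identifications $\beta - \alpha \leftrightsquigarrow r\mu$, $1-y \leftrightsquigarrow \frac{\sigma^2}{2\mu}\phi$, $\beta y - \alpha \leftrightsquigarrow 1 - \frac{\sigma^2}{2\mu}\phi$ (so that the constant term in the denominator becomes $1$), and $T\leftrightsquigarrow 1$. The same differentiation argument applies: setting $w(s)=e^{r\mu(1-s)}$ and $G(s) = \frac{1}{r\mu}\cdot\frac{w(s)-1}{1+\frac{\sigma^2}{2\mu}\phi(w(s)-1)}$, one finds
\[-G'(s) = \frac{e^{r\mu(1-s)}}{\bigl(1+\tfrac{\sigma^2}{2\mu}\phi(e^{r\mu(1-s)}-1)\bigr)^2},\]
after cancellation of the $r\mu$ factors, giving $\int_{s_i}^1 (-G'(s))\,\d s = G(s_i)$. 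There is no real obstacle here; the only thing to be careful about is tracking the algebraic cancellation in the numerator of the derivative, which is exactly the place where the simple closed form arises.
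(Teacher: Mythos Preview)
Your proof is correct. The paper carries out the substitution $t=e^{(\beta-\alpha)(T-s)}$ explicitly and integrates forward, whereas you verify the same antiderivative by differentiation; these are essentially the same elementary argument, and you even identify the underlying substitution.
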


\begin{proof}
By substituting $t = e^{(\beta-\alpha)(T-s)}$, we see that
\begin{multline*}
\int_{s_j}^{T} \frac{e^{(\beta-\alpha)(T-s)}}{(\beta(1-y)e^{(\beta-\alpha)(T-s)}+\beta y-\alpha)^2} \d s = \frac{1}{\beta-\alpha}\int_1^{e^{(\beta-\alpha)(T-s_j)}} \frac{1}{(\beta(1-y)t + \beta y-\alpha)^2} \d t\\
= \frac{1}{(\beta-\alpha)\beta(1-y)}\Big(\frac{1}{\beta - \alpha} - \frac{1}{\beta(1-y)e^{(\beta-\alpha)(T-s_j)} + \beta y - \alpha}\Big).
\end{multline*}
Furthermore,
\begin{align*}
\frac{1}{\beta - \alpha} - \frac{1}{\beta(1-y)e^{(\beta-\alpha)(T-s_j)} + \beta y - \alpha} &= \frac{\beta(1-y)e^{(\beta-\alpha)(T-s_j)}+\beta y - \beta}{(\beta-\alpha)(\beta(1-y)e^{(\beta-\alpha)(T-s_j)} + \beta y - \alpha)}\\
&= \frac{\beta(1-y)(e^{(\beta-\alpha)(T-s_j)}-1)}{(\beta-\alpha)(\beta(1-y)e^{(\beta-\alpha)(T-s_j)} + \beta y - \alpha)}.
\end{align*}
Combining these two calculations gives the first part of the result. The second is very similar.
\end{proof}

The following lemma is elementary, but we do not know a suitable reference.

\begin{lem}\label{unordering}
Suppose that $X_1\le X_2\le \ldots \le X_n$ are ordered random variables satisfying
\[\P(X_1\in(a_1,b_1],\ldots,X_n\in(a_n,b_n]) = \int_{a_1}^{b_1}\cdots\int_{a_n}^{b_n} f(x_1,\ldots,x_n) \d x_n\ldots\d x_1\]
for some symmetric function $f$ and any $a_1<b_1\le a_2<b_2\le\ldots\le a_n < b_n$. Let $Y_1,\ldots,Y_n$ be a uniformly random permutation of $X_1,\ldots,X_n$. Then
\[\P(Y_1 > y_1,\ldots,Y_n>y_n,\, Y_i\neq Y_j \,\forall i\neq j) = \frac{1}{n!}\int_{y_1}^1\cdots\int_{y_n}^1 f(x_1,\ldots,x_n) \d x_n\ldots\d x_1.\]
\end{lem}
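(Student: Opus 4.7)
The plan is to show that the joint law of $(Y_1,\ldots,Y_n)$ has density $\frac{1}{n!}f(y_1,\ldots,y_n)$ on $\R^n$ (or, in the context of the applications in Theorems \ref{noncritBDthm}, \ref{critBDthm} and \ref{nearcritthm}, on $[0,1]^n$), after which the result follows by direct integration. Since $f$ is a density, the event $\{X_i=X_j\}$ for any $i\neq j$ is a lower-dimensional subset of the support and hence has probability zero; thus $\{Y_i\neq Y_j\,\forall i\neq j\}$ has probability $1$ and can be neglected throughout.

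To compute the joint density of $Y$, I would condition on $X=(X_1,\ldots,X_n)$. Since $Y=(X_{\pi(1)},\ldots,X_{\pi(n)})$ for a uniformly random permutation $\pi$ independent of $X$, for disjoint Borel sets $A_1,\ldots,A_n$ we have
\[
\P(Y_1\in A_1,\ldots,Y_n\in A_n) \;=\; \frac{1}{n!}\sum_{\sigma\in S_n}\P(X_{\sigma(1)}\in A_1,\ldots,X_{\sigma(n)}\in A_n),
\]
where the sum runs over all permutations of $\{1,\ldots,n\}$. Using the hypothesis on the law of $X$, each term on the right equals
\[
\int_{\R^n} \ind_{\{x_1\le\cdots\le x_n\}}\,\ind_{\{x_{\sigma(i)}\in A_i\,\forall i\}}\, f(x_1,\ldots,x_n)\,\d x_1\cdots\d x_n.
\]

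For each fixed $\sigma$, I would then apply the change of variables $y_i = x_{\sigma(i)}$ (equivalently $x_j=y_{\sigma^{-1}(j)}$), whose Jacobian is $\pm 1$. The ordering constraint transforms into $\ind_{\{y_{\sigma^{-1}(1)}\le\cdots\le y_{\sigma^{-1}(n)}\}}$, and crucially the symmetry of $f$ gives $f(x_1,\ldots,x_n)=f(y_1,\ldots,y_n)$. Summing over $\sigma$, and noting that for almost every $y$ (i.e.\ those with all coordinates distinct) exactly one permutation $\tau=\sigma^{-1}$ satisfies $y_{\tau(1)}<\cdots<y_{\tau(n)}$, the sum of these indicators telescopes to $1$ Lebesgue-a.e. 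Therefore
\[
\P(Y_1\in A_1,\ldots,Y_n\in A_n) \;=\; \frac{1}{n!}\int_{A_1\times\cdots\times A_n} f(y_1,\ldots,y_n)\,\d y_1\cdots\d y_n,
\]
which identifies the joint density of $(Y_1,\ldots,Y_n)$ as $\frac{1}{n!}f$.

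The stated tail probability is then obtained by integrating this density over the product region $\{y_i>y_i\}\cap[0,1]^n$ (or over $\{y_i>y_i\}$ in general, where the apparent upper limit of $1$ reflects that $f$ will be supported on $[0,1]^n$ in all of our applications). I do not foresee a genuine obstacle: the only subtlety is the careful bookkeeping of the permutation $\sigma^{-1}$ versus $\sigma$ in the change of variables, and ensuring that the disjointness of the $A_i$ (used to justify the unambiguous assignment of which $X_{\sigma(i)}$ lands in which $A_i$) is not needed for the final formula, which follows from a standard Dynkin-class / monotone-class extension from the algebra of product rectangles.
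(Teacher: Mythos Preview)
Your proof is correct and follows essentially the same route as the paper's: decompose over permutations, change variables, use the symmetry of $f$, and observe that for almost every point exactly one permutation puts the coordinates in increasing order. The only difference is cosmetic---you first identify the joint density of $(Y_1,\ldots,Y_n)$ as $\tfrac{1}{n!}f$ and then integrate, whereas the paper works directly with the tail probabilities---and one small caveat: the hypothesis only specifies the law of $(X_1,\ldots,X_n)$ on products of \emph{disjoint} intervals, so your claim that $\P(X_i=X_j)=0$ is not actually implied, but this is harmless since the lemma statement already restricts to $\{Y_i\neq Y_j\,\forall i\neq j\}$ and you can simply carry that event through (as the paper does) rather than discarding it.
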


\begin{proof}
First note that, via a standard limiting procedure, for any $c_1,\ldots,c_n\in[0,1]$,
\[\P(X_1>c_1,\ldots, X_n>c_n, \, X_i\neq X_j\,\forall i\neq j) = \int_{c_1}^1\cdots\int_{c_n}^1 f(x_1,\ldots,x_n)\ind_{\{x_1<\ldots<x_n\}} \d x_n\ldots\d x_1.\]
We now deviate from our usual notation by temporarily letting $S_n$ be the symmetric group on $n$ objects. Then
\begin{align*}
&\P(Y_1 > y_1,\ldots,Y_n>y_n,\, Y_i\neq Y_j \,\forall i\neq j)\\
&\hspace{20mm}= \sum_{\sigma\in S_n} \frac{1}{n!} \P(\sigma(X_1)>y_1,\ldots, \sigma(X_n)>y_n, \, X_i\neq X_j\,\forall i\neq j)\\
&\hspace{20mm}= \frac{1}{n!} \sum_{\sigma\in S_n} \P(X_1>\sigma^{-1}(y_1),\ldots, X_n>\sigma^{-1}(y_n), \, X_i\neq X_j\,\forall i\neq j)\\
&\hspace{20mm}= \frac{1}{n!} \sum_{\sigma\in S_n} \int_{\sigma^{-1}(y_1)}^1 \cdots \int_{\sigma^{-1}(y_n)}^1 f(x_1,\ldots,x_n) \ind_{\{x_1<\ldots<x_n\}} \d x_n\ldots\d x_1\\
&\hspace{20mm}= \frac{1}{n!} \sum_{\sigma\in S_n} \int_{y_1}^1 \cdots \int_{y_n}^1 f(\sigma(x_1),\ldots,\sigma(x_n)) \ind_{\{\sigma(x_1)<\ldots<\sigma(x_n)\}} \d x_n\ldots\d x_1.
\end{align*}
Since $f$ is symmetric, this equals
\[\frac{1}{n!} \sum_{\sigma\in S_n} \int_{y_1}^1 \cdots \int_{y_n}^1 f(x_1,\ldots,x_n) \ind_{\{\sigma(x_1)<\ldots<\sigma(x_n)\}} \d x_n\ldots\d x_1,\]
and since for any $x_1,\ldots,x_n$, exactly one of the permutations in $S_n$ satisfies $\sigma(x_1)<\ldots<\sigma(x_n)$, we get the result.
\end{proof}

Finally, we prove Lemma \ref{momrelax}. This roughly said that we can assume without loss of generality that $\P_T[L^{(j)}] = o(T^{j-2})$ for each $j\ge 3$. More precisely, for each $k\ge 1$, under $\P_T$, there exists a coupling between our Galton-Watson tree with offspring distribution $L$ (and its $k$ chosen particles) and another Galton-Watson tree with offspring distribution $\tilde L$ satisfying
\begin{itemize}
\item $\P_T[\tilde L] = 1+\mu/T + o(1/T)$;
\item $\P_T[\tilde L(\tilde L-1)] = \sigma^2 + o(1)$;
\item there exists a deterministic sequence $J(T)=o(T)$ such that $\P_T(\tilde L = j) = 0$ for all $j\ge J(T)$,
\end{itemize}
such that conditionally on $N_T\ge k$, with probability tending to $1$, the two trees induced by the $k$ chosen particles are equal until time $T$.

\begin{proof}[Proof of Lemma \ref{momrelax}]
We claim that we can choose integers $J(T)$ such that $J(T)=o(T)$ and $\sum_{j=J(T)}^\infty j p_j^{(T)} = o(1/T)$. To see this, first note that for any $\eps>0$, $\sum_{\eps T}^\infty jp_j^{(T)} = o(1/T)$, otherwise $\sum_{\eps T}^\infty j^2 p_j^{(T)}$ is larger than a constant infinitely often, contradicting the uniform integrability of $L^2$.

Choose any sequence $\eps_i\to 0$; by the above, we may choose $t_i\ge i$ such that
\begin{equation}\label{epsieq}
\sum_{\eps_i T}^\infty j p_j^{(T)} < \frac{\eps_i}{T} \hspace{4mm} \forall T\ge t_i.
\end{equation}
Then for any $T$, let $I(T) = \max\{i : t_i\le T\}$ and $J(T) = \lceil\eps_{I(T)}T\rceil$.

Since $I(T)\to\infty$ (because $t_i\ge i$) we have $J(T) = \lceil\eps_{I(T)}T\rceil =o(T)$. But also, by \eqref{epsieq},
\[\sum_{j=J(T)}^\infty j p_j^{(T)} < \frac{\eps_{I(T)}}{T}\]
since $T\ge t_{I(T)}$ by definition of $I(T)$. Therefore $J(T)$ satisfies the claim.

We now choose our distribution $\tilde L$. If $1\le j < J(T)$ then let $\tilde p^{(T)}_j = p^{(T)}_j$. If $j\ge J(T)$ then let $\tilde p^{(T)}_j = 0$. Then choose $\tilde p^{(T)}_0$ so that $\sum_j \tilde p^{(T)}_j = 1$. Let $\tilde L$ satisfy 
\[\P_T(\tilde L = j) = \tilde p^{(T)}_j \hspace{4mm} \forall j\ge 0.\]
We then have
\[\P_T[\tilde L] = \sum_{j=1}^{J(T)-1} j p^{(T)}_j = \P_T[L] - \sum_{j= J(T)}^\infty jp^{(T)}_j = 1+\frac{\mu}{T} + o(1/T)\]
by the claim that we have just proved about $J(T)$, and
\[\P_T[\tilde L(\tilde L-1)] = \sum_{j=2}^{J(T)-1} j(j-1) p^{(T)}_j = \P_T[L(L-1)] - \sum_{j=J(T)}^\infty j(j-1)p^{(T)}_j = \sigma^2 + o(1)\]
by the fact that $L^2$ is uniformly integrable. Therefore $\tilde L$ satisfies the three properties required in the statement of the lemma.

Couple two Galton-Watson trees GW$(L)$ and GW$(\tilde L)$ in the obvious way: if a particle in GW$(L)$ has $j$ children for some $j<J(T)$, then it also has $j$ children in GW$(\tilde L)$. On the other hand, if a particle in GW$(L)$ has $j$ children for some $j\ge J(T)$, then it has no children in GW$(\tilde L)$. The set of particles in GW$(\tilde L)$ is then a subset of those in GW$(L)$ and any particle that exists in GW$(\tilde L)$ has lifetime equal to its counterpart in GW$(L)$. Choose $k$ particles uniformly at random without replacement at time $T$ in GW$(L)$. If they exist in GW$(\tilde L)$ then they are also our chosen particles in GW$(\tilde L)$; if not, then pick $k$ particles uniformly and independently from GW$(\tilde L)$.

The two trees induced by the chosen particles are equal if and only if none of the ancestors of the $k$ chosen particles in GW$(L)$ gave birth to more than $J(T)$ children. By a union bound, it suffices to show that the probability that the first of the $k$ particles has an ancestor that gave birth to more than $J(T)$ particles, conditional on $N_T\ge k$, tends to $0$. From now on we may assume without loss of generality that $\P_T[L]\ge 1$.

For a particle $u\in \mathcal N_T$, let $\Phi_T(u)$ be the event that at least one of the ancestors of $u$ had more than $J(T)$ children. By \eqref{subfirstprop},
\[\P_T\Big[\frac{1}{N_T}\sum_{u\in \mathcal N_T} \ind_{\Phi_T(u)}\,\Big|\, N_T\ge 1 \Big] = \frac{\P_T[N_T]}{\P_T(N_T\ge 1)} \Q^{1,T}_T\Big[\frac{1}{N_T}\ind_{\Phi_T(\xi^1_T)}\Big].\]
By the FKG inequality,
\[\Q^{1,T}_T\Big[\frac{1}{N_T}\ind_{\Phi_T(\xi^1_T)}\Big] \le \Q_T^{1,T}\Big[\frac{1}{N_T}\Big]\Q_T^{1,T}(\Phi_T(\xi^1_T)),\]
so applying \eqref{subfirstprop} again with $f\equiv 1$,
\[\P_T\Big[\frac{1}{N_T}\sum_{u\in \mathcal N_T} \ind_{\Phi_T(u)}\,\Big|\, N_T\ge 1 \Big] \le \frac{\P_T[N_T]}{\P_T(N_T\ge 1)} \Q^{1,T}_T\Big[\frac{1}{N_T}\Big]\Q^{1,T}_T(\Phi_T(\xi^1_T)) = \Q^{1,T}_T(\Phi_T(\xi^1_T)).\]
By Markov's inequality, this is at most the expected number of births of size larger than $J(T)$ along the spine by time $T$ under $\Q^{1,T}_T$; by Lemma \ref{Qbots} (note that since we have only one spine, $\psi_1=\infty$) the births occur as a Poisson point process of rate $r m_T$, and by Lemma \ref{sizebiased} the sizes of the births are size-biased. Thus
\[\Q^{1,T}_T(\Phi_T(\xi^1_T)) \le r m_T T \sum_{j=J(T)}^\infty \frac{j p_j^{(T)}}{m_T}.\]
But $m_T\to 1$ and we chose $J(T)$ such that the sum above is $o(1/T)$; so $\Q^{1,T}_T(\Phi_T(\xi^1_T))\to 0$ and therefore
\[\P_T\Big[\frac{1}{N_T}\sum_{u\in \mathcal N_T} \ind_{\Phi_T(u)}\,\Big|\, N_T\ge 1 \Big] \to 0.\]

We wanted to show that the probability that the first chosen particle has an ancestor that gave birth to more than $J(T)$ particles, conditional on $N_T\ge k$, tends to $0$. We have shown the same statement conditional on $N_T\ge 1$, so it now suffices to show that
\[\P_T(N_T\in\{1,\ldots,k-1\}|N_T\ge 1)\to 0.\]
This follows from the near-critical version of Yaglom's theorem: see \cite[Theorem 2]{fahady_et_al:heavy_traffic_GW}.
\end{proof}

\section*{Acknowledgements}
All three authors are extremely grateful to Amaury Lambert for an in-depth discussion of our paper and related work, including pointing out several helpful references. We also thank an anonymous referee for many very detailed and helpful comments that we feel have significantly improved this article; in particular, for pushing us to give a clearer intuition and probabilistic explanation of our results. 

MR was supported during the early stages of this work by EPSRC fellowship EP/K007440/1, and during the latter stages by a Royal Society University Research Fellowship.
SGGJ was supported during for the major part of this work by University of Bath URS funding.

\bibliographystyle{plain}
\def\cprime{$'$}

\end{document}